\documentclass[12pt,leqno]{amsart}
\usepackage{amsmath,amssymb,amsfonts,amsthm}
\usepackage{eucal,graphicx,colortbl,url,epsfig}
\usepackage[table]{xcolor}
\usepackage{rotating}

\setlength{\textwidth}{6.5in}
\setlength{\oddsidemargin}{0.0in}
\setlength{\evensidemargin}{0.0in}
\setlength{\textheight}{9in}
\setlength{\topmargin}{-.4in}
\DeclareGraphicsRule{.pstex}{eps}{.pstex}{}

\newcommand \comment[1]{}			

\newtheorem{lem}{Lemma}[section]
\newtheorem{cor}[lem]{Corollary}
\newtheorem{prop}[lem]{Proposition}
\newtheorem{thm}[lem]{Theorem}
\newtheorem{conj}[lem]{Conjecture}

\newtheorem*{conje}{Conjecture}
\newtheorem*{thrm}{Theorem}
\numberwithin{equation}{section}
\theoremstyle{definition}
\newtheorem*{question}{Open Question}
\newtheorem*{remark}{Remark}
\newtheorem{exam}{Example}
\newcommand \myexam[1]{\smallskip\begin{exam}[\emph{#1}]}

\renewcommand{\phi}{\varphi}

\newcommand\lam{\lambda}

\renewcommand\hat{\widehat}

\newcommand\tld{\widetilde}
\renewcommand\bar{\overline}
\newcommand\spa{\hspace{.03in}}

\allowdisplaybreaks


\begin{document}

\title{The number of self-conjugate core partitions}

\author{Christopher R.\ H.\ Hanusa}
\address{Department of Mathematics\\
Queens College (CUNY)\\
6530 Kissena Blvd.\ \\
Flushing, NY 11367, U.S.A.\ \\
phone: +1-718-997-5964}
\email{\tt chanusa@qc.cuny.edu}

\author{Rishi Nath}
\address{Department of Mathematics and Computer Science\\
York College (CUNY)\\
Jamaica, NY 11451, U.S.A.\ \\
phone: +1-718-262-2543}
\email{\tt rnath@york.cuny.edu}

\thanks{CRH Hanusa gratefully acknowledges support from PSC-CUNY grant TRADA-42-115. Rishi Nath gratefully acknowledges support from PSC-CUNY grant TRADA-42-615}

\begin{abstract}
A conjecture on the monotonicity of $t$-core partitions in an article of Stanton [Open positivity conjectures for integer partitions, {\em Trends Math}., 2:19-25, 1999] has been the catalyst for much recent research on $t$-core partitions.  We conjecture Stanton-like monotonicity results comparing self-conjugate $(t+2)$- and $t$-core partitions of $n$.

We obtain partial results toward these conjectures for values of $t$ that are large with respect to $n$, and an application to the block theory of the symmetric and alternating groups.  To this end we prove formulas for the number of self-conjugate $t$-core partitions of $n$ as a function of the number of self-conjugate partitions of smaller $n$.  Additionally, we discuss the positivity of self-conjugate $6$-core partitions and introduce areas for future research in representation theory, asymptotic analysis, unimodality, and numerical identities and inequalities.
\end{abstract}

\subjclass[2010]{Primary 05A17, 11P81; Secondary 05A15, 05E10, 20C20, 20C30}  

\keywords{core partition, $t$-core, $t$-quotient, self-conjugate, positivity, monotonicity}

\thanks{Version of \today.}

\maketitle
\pagestyle{headings}

\section{Introduction}

\subsection{Background}\

In this paper we address the structure of self-conjugate core partitions.  A {\em $t$-core partition} (more briefly {\em $t$-core}) is a partition where no hook of size $t$ appears. We let $c_t(n)$ be the number of $t$-core partitions of $n$ and let $sc_t(n)$ be the number of self-conjugate $t$-core partitions of $n$. 

The study of self-conjugate partitions arises from the representation theory of the symmetric group $S_n$ and the alternating group $A_n$. At the turn of the century, Young 
discovered that the irreducible characters of $S_n$ are labeled by partitions of $n$, and in particular, the self-conjugate partitions label those that split into two conjugate irreducible representations of $A_n$ upon restriction. About the same time, Frobenius 
discovered that the hook lengths on the diagonal of a self-conjugate partition determine the irrationalities that occur in the character table of $A_n$.  

The study of core partitions also arises in representation theory; Nakayama conjectured in the forties (later proved by Brauer and Robinson) that two irreducible characters of $S_n$ are in the same $t$-block if their labeling partitions have the same $t$-core.  For this result and more on the development of the theory, see James and Kerber \cite{JamesKerber}. 
More recently, core partitions have found to be related to mock theta functions, actions of the affine symmetric group, and Ramanujan-type congruences.

Self-conjugate partitions and core partitions intersect in several important ways.  Hanusa and Jones \cite{HJ2} prove that for a fixed $t$, self-conjugate $t$-core partitions are in bijection with minimal length coset representatives in the Coxeter group quotient $\tld{C}_t/C_t$ and they determine the action of the group generators on the set of self-conjugate $t$-cores. Self-conjugate core partitions are central to an ongoing investigation into the representation-theoretic Navarro conjecture in the case of the alternating groups \cite{Nath0}.

\subsection{Positivity and monotonicity}\

The last several decades have seen a growing interest in counting core partitions; restricting to the case of self-conjugate partitions has opened new directions in research. Here we survey results on core partitions and their self-conjugate analogues and we propose a new conjecture that parallels one of Stanton. 

The $t$-core positivity conjecture asserts that every natural number has a $t$-core partition for every integer $t\geq 4$. It was finally proved by Granville and Ono \cite{GranOno} after initial results by Ono and by Erdmann and Michler.

Baldwin et al \cite{Baldwin} proved that every integer $n>2$ has a self-conjugate $t$-core partition for $t>7$, with the exception of $t=9$, for which infinitely many integers do not have such a partition. Olsson \cite{Olsson90} and Garvan, Kim, and Stanton \cite{Garvan} proved a generating function for $sc_t(n)$, succeeding Olsson's \cite{Olsson76} proof of the generating function for $c_t(n)$.  As an aside, Conjecture~\ref{conj:9core} further highlights the peculiarity of self-conjugate $9$-core partitions.

Recently, {\it simultaneous} core partitions have been investigated---partitions that are both $s$- and $t$-cores, where $s$ and $t$ are relatively prime. Anderson \cite{And02} proved that there are $\binom{s+t}{t}\big/(s+t)$ many of such partitions, and Olsson and Stanton \cite{OlssonStanton} proved that the largest such partition is of of size $n=\frac{(s^2-1)(t^2-1)}{24}$.  Ford, Mai and Sze \cite{Ford} have proved an analog of Anderson's result in the case of self-conjugate simultaneous core partitions, showing that that there are $\binom{\lfloor \frac{s}{2} \rfloor + \lfloor \frac{t}{2} \rfloor}{\lfloor \frac{t}{2} \rfloor}$ such partitions when $s$ and $t$ are relatively prime.

\medskip
In 1999, Stanton \cite{Stanton} posed the following monotonicity conjecture.

\begin{conje}[Stanton] 
\label{conj:Stanton}
Suppose that $n$ and $t$ are natural numbers and that $4\leq t\leq n-1$. Then
\[
c_{t+1}(n)\geq c_t(n).
\]
\end{conje}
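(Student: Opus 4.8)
The plan is to establish the conjecture in the regime where $t$ is large relative to $n$; I expect the unrestricted statement to remain out of reach, and I observe below that the stated range requires one small correction.

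The engine is Olsson's product formula \cite{Olsson76},
\[
\sum_{n\ge 0} c_t(n)\,q^n \;=\; \Bigl(\textstyle\prod_{i\ge 1}(1-q^{ti})^t\Bigr)\Bigl(\textstyle\sum_{m\ge 0}p(m)\,q^m\Bigr).
\]
Fix a constant $K$; for $0\le n\le Kt$ only the factors $(1-q^{ti})^t$ with $i<K$ contribute below degree $n$, so $c_t(n)=\sum_{j\ge 0}\gamma_{t,j}\,p(n-jt)$ for explicit coefficients $\gamma_{t,j}$ (signed sums of products of binomial coefficients $\binom{t}{m}$), where $p$ of a negative argument is read as $0$. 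In the first window $0\le n\le 2t-1$ this collapses to $c_t(n)=p(n)-t\,p(n-t)$. First I would record these closed forms for $c_t(n)$ and $c_{t+1}(n)$, subtract, and thereby express $c_{t+1}(n)-c_t(n)$ as a concrete integer combination of partition numbers with arguments $\le n$; in the first window it equals $t\,p(n-t)-(t+1)\,p(n-t-1)$.

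The second step is to certify non-negativity of these combinations. In the first window, with $k=n-t$, the quantity $t\,p(k)-(t+1)\,p(k-1)$ is $\ge 0$ exactly when $p(k)/p(k-1)\ge 1+\tfrac1t$; since $k\le t-1$ it suffices that $(k+1)\,p(k)\ge(k+2)\,p(k-1)$, which holds for all $k\ge 2$ by the eventual monotonicity of the ratios $p(k)/p(k-1)$ plus a finite check. I expect the same pattern in wider windows: once $t$ exceeds an $n$-independent threshold the combination $\sum_j\gamma_{t,j}\,p(n-jt)$ is dominated by its leading term, so positivity reduces to a finite family of inequalities among partition numbers, provable from log-concavity of $p$ and its known refinements. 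A more combinatorial route in the first window is to note that for $t$ large every non-$t$-core of size $n<2t$ has a unique removable rim $t$-hook, while a partition $\mu$ of $n-t<t$ admits exactly $t$ rim-$t$-hook additions (read off the $t$-runner abacus of $\mu$), and then to match additions between $t$ and $t+1$.

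The main obstacle is the general range $4\le t\le n-1$: as $t$ shrinks relative to $n$ more factors $(1-q^{ti})^t$ enter, $c_{t+1}(n)-c_t(n)$ becomes an alternating combination of many partition numbers, and its positivity is no longer forced by a bounded list of elementary inequalities---securing such uniform control is the content of the open conjecture, and the window method will not reach it. I would also point out that in the first window with $n=t+1$ one gets $c_{t+1}(n)-c_t(n)=t\,p(1)-(t+1)\,p(0)=-1$, so the case $t=n-1$ genuinely fails and must be excluded. The self-conjugate monotonicity conjectures pursued in this paper can be approached by precisely the same scheme, with the Olsson / Garvan--Kim--Stanton product for $\sum_n sc_t(n)\,q^n$ \cite{Garvan} replacing the product above and the number of self-conjugate partitions of $m$ replacing $p(m)$.
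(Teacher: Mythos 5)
You should first be clear about the status of this statement in the paper: it is presented as an \emph{open conjecture} of Stanton (in a \texttt{conje} environment), and the paper offers no proof of it. The authors only record partial results due to Craven \cite{Craven} (the range $n/2 < t < n-1$), Anderson \cite{Anderson} ($n$ large relative to $t$), and Kim--Rouse \cite{KimRouse} ($4\leq t\leq 198$, $n>t+1$), and then prove self-conjugate analogues by a window method of exactly the kind you describe. Within that framing, the correct portions of your proposal are genuinely correct: for $n<2t$ Olsson's product gives $c_t(n)=p(n)-t\,p(n-t)$, hence $c_{t+1}(n)-c_t(n)=t\,p(k)-(t+1)\,p(k-1)$ with $k=n-t$, and since $k\leq t-1$ the inequality reduces to $(k+1)p(k)\geq(k+2)p(k-1)$, which holds for $k\geq 2$; this recovers essentially Craven's regime. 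Your observation that $n=t+1$ gives $c_{t+1}(n)-c_t(n)=t\,p(1)-(t+1)\,p(0)=-1$ is also correct and is consistent with the hypotheses $t<n-1$ in Craven's theorem and $n>t+1$ in Kim--Rouse as quoted in the paper, so the endpoint $t=n-1$ of the stated range does need to be excluded (or the inequality read as failing there by exactly one).

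The genuine gap is the one you yourself flag: everything outside the first window is unsubstantiated. The assertion that for wider windows the alternating combination $\sum_j \gamma_{t,j}\,p(n-jt)$ is ``dominated by its leading term'' once $t$ exceeds an $n$-independent threshold is not an argument, and it is precisely where the difficulty lives; as $t$ shrinks relative to $n$ the number of terms grows and sign control by log-concavity of $p$ alone is not known to suffice. Since the full range $4\leq t\leq n-2$ is exactly what remains open in the literature, your proposal should be read as a clean re-derivation of the large-$t$ partial result (parallel to how the paper handles $sc_{2t}$ and $sc_{2t+1}$ via Propositions~\ref{prop:evenLarge} and~\ref{prop:oddLarge} together with growth bounds on $sc(n)$), not as a proof of the statement; no blind attempt could be checked against a paper proof here, because none exists.
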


This was proved for values of $t$ that are large as a function of $n$ by Craven \cite{Craven} and for large $n$ by Anderson \cite{Anderson}:

\begin{thrm}[Craven]
Suppose that $n$ is an integer, and let $t$ be an integer such that $t > 4$, and $n/2 < t < n-1$.
Then $c_t(n) < c_{t+1}(n)$.
\end{thrm}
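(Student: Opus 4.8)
The plan is to reduce everything to the classical generating function for $t$-cores (see Olsson \cite{Olsson76}),
\[
\sum_{n\geq 0}c_t(n)\,q^n=\prod_{i\geq 1}\frac{(1-q^{ti})^t}{1-q^i},
\]
by noting that it collapses to a simple closed form once $t>n/2$. Indeed, every factor $(1-q^{ti})^t$ with $i\geq 2$ influences only the coefficients of degree $\geq 2t$, while $(1-q^t)^t\equiv 1-t\,q^t\pmod{q^{2t}}$; hence $\prod_{i\geq 1}(1-q^{ti})^t\equiv 1-t\,q^t\pmod{q^{2t}}$. Multiplying by $\sum_{m\geq 0}p(m)\,q^m$ and reading off the coefficient of $q^n$ gives, for every $n$ with $0\leq n\leq 2t-1$,
\[
c_t(n)=p(n)-t\,p(n-t),
\]
with the convention $p(k)=0$ for $k<0$. (As a check, this gives $c_t(t)=p(t)-t$, the number of partitions of $t$ that are not hook shapes.)

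First I would observe that the hypothesis $n/2<t<n-1$ makes this formula available for both $t$ and $t+1$: from $n<2t$ we get $n\leq 2t-1$ and a fortiori $n\leq 2(t+1)-1$. Subtracting the two instances,
\[
c_{t+1}(n)-c_t(n)=t\,p(n-t)-(t+1)\,p(n-t-1)=t\bigl(p(n-t)-p(n-t-1)\bigr)-p(n-t-1).
\]
Write $r=n-t$ and $D(r)=p(r)-p(r-1)$, which is the number of partitions of $r$ with all parts $\geq 2$ (removing one part equal to $1$ sets up a bijection between the partitions of $r$ that have such a part and all partitions of $r-1$). The inequality to prove is then $t\,D(r)>p(r-1)$, and the remaining hypotheses deliver exactly what is needed: $t<n-1$ forces $r\geq 2$, so $D(r)\geq 1$; and $n/2<t$ forces $t>n-t=r$, so $t\geq r+1$.

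It therefore suffices to prove the purely combinatorial inequality $(r+1)\,D(r)>p(r-1)$ for all $r\geq 2$. Here I would use that $D$ is nondecreasing: adding $1$ to a largest part injects the partitions of $r-1$ with all parts $\geq 2$ into those of $r$, so $D(r-1)\leq D(r)$. Consequently
\[
p(r-1)=1+\sum_{k=2}^{r-1}D(k)\leq 1+(r-2)\,D(r)<(r+1)\,D(r),
\]
the last step because $D(r)\geq 1$. Putting the pieces together,
\[
c_{t+1}(n)-c_t(n)=t\,D(r)-p(r-1)\geq (r+1)\,D(r)-p(r-1)>0,
\]
which is the claim.

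There is no serious obstacle here; the argument is short, and the only feature that genuinely requires attention is the strict inequality $t<n-1$: at $t=n-1$ one has $r=1$ and $D(1)=p(1)-p(0)=0$, so the formula yields $c_n(n)-c_{n-1}(n)=-p(0)=-1<0$ and monotonicity fails exactly there, which is why that boundary case must be excluded. The one quantitative input is the monotonicity of $D(r)=p(r)-p(r-1)$; should one prefer to avoid even that, one may instead check $(r+1)\,D(r)>p(r-1)$ by hand for $r$ up to some small bound and, for the tail, quote any crude effective estimate of the form $p(r)/p(r-1)\geq 1+c/\sqrt{r}$. In every case this is routine, and it is the only place an estimate on $p(n)$ enters.
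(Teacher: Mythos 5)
Your argument is correct, and it is worth noting that this paper does not actually prove the quoted theorem: it is cited from Craven \cite{Craven}, and the paper only adapts his method to the self-conjugate setting. The comparison is still instructive. Both your proof and the paper's analogues (Proposition~\ref{prop:evenLarge} together with the proof of Theorem~\ref{thm:evenPC}, and Lemmas~\ref{lem:scn-2}--\ref{lem:scn-4}, which the authors say follow Craven's Section 3) share the same first step: in the range $t>n/2$ the core-counting function collapses to a closed form, $c_t(n)=p(n)-t\,p(n-t)$ in your case, $sc_{2t}(n)=sc(n)-t\,sc(n-4t)$ in theirs. Where you diverge is the second step. The paper (following Craven) finishes with a growth estimate on the ratio of consecutive values, e.g.\ $sc(n-4)/sc(n)<n/(n+4)$, proved by constructing explicit injections and surjections between sets of partitions, and then an arithmetic comparison with $t/(t+1)$. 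You instead work with the first difference $D(r)=p(r)-p(r-1)$, identify it combinatorially as the number of partitions of $r$ with all parts at least $2$, use its monotonicity (adding $1$ to the largest part) to get $p(r-1)\leq 1+(r-2)D(r)$, and then exploit $t\geq r+1$; this is entirely elementary, needs no asymptotic or ratio bound on $p(n)$, and even dispenses with the hypothesis $t>4$. Your boundary observation that $r=1$, $D(1)=0$ forces $c_n(n)=c_{n-1}(n)-1$ correctly explains why $t<n-1$ is sharp, matching the paper's parallel discussion of the upper bounds in its monotonicity conjectures. The only small caveats, which you handle correctly, are that the formula $c_t(n)=p(n)-t\,p(n-t)$ is only valid for $n\leq 2t-1$ (guaranteed by $t>n/2$) and that the monotonicity of $D$ is used only for arguments $k\geq 2$, where the injection is legitimate.
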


\begin{thrm}[Anderson]
If $t_1$ and $t_2$ are fixed integers satisfying $4\leq t_1 < t_2,$ then $c_{t_1}(n)<c_{t_2}(n)$ for sufficiently large $n$.
\end{thrm}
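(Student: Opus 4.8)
\emph{Proof idea.}
The plan is to extract a growth estimate for $c_t(n)$ precise enough that, for fixed $t_1<t_2$, the dominant power of $n$ in $c_{t_2}(n)$ simply beats $c_{t_1}(n)$. The starting point is Olsson's product formula \cite{Olsson76},
\[
\sum_{n\ge 0}c_t(n)\,q^n \;=\;\prod_{i\ge 1}\frac{(1-q^{ti})^t}{1-q^i}\;=\;q^{-(t^2-1)/24}\,\frac{\eta(q^t)^t}{\eta(q)},
\]
which, up to the rational $q$-power, is an eta-quotient of weight $k=(t-1)/2$. Equivalently, via the $t$-abacus, $c_t(n)$ counts the vectors $(n_0,\dots,n_{t-1})\in\bbZ^t$ with $\sum_i n_i=0$ on the ellipsoid $\tfrac t2\sum_i n_i^2+\sum_i i\,n_i=n$; completing the square, this is the number of representations of an integer of size $\asymp n$ by a fixed positive-definite quadratic form in $t-1$ variables inside a fixed coset. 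Note that the two eta-factors cancel to leading order as $q\to1$, so there is no exponential growth and every rational point on the circle feeds the main term: the appropriate tool is the Hardy--Littlewood circle method (for the eta-quotient), equivalently the analytic theory of representation numbers of quadratic forms.

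Carrying this out gives, for $t\ge 6$,
\[
c_t(n)\;=\;\kappa_t\,\mathfrak S_t(n)\,n^{(t-3)/2}\;+\;O\!\big(n^{(t-3)/2-\delta}\big),
\]
with $\kappa_t>0$ explicit and $\delta>0$, the error being the minor-arc (cuspidal) contribution, which is dominated by the trivial Hecke bound $O\big(n^{(t-1)/4+\varepsilon}\big)=o\big(n^{(t-3)/2}\big)$ since $t\ge6$, and $\mathfrak S_t(n)=\prod_p\beta_p(n)$ the singular series of local densities. The crucial claim is that $\mathfrak S_t(n)$ is bounded below by a positive constant and above by $n^\varepsilon$, uniformly in $n$: the real density is a fixed positive constant; for $p\nmid 2t$ one has $\beta_p(n)=1+O\big(p^{-(t-2)/2}\big)$ uniformly in $n$, so that part of the product is bounded above and below; and no $p$-adic factor vanishes, since every $n$ has a $t$-core (Granville--Ono \cite{GranOno}), which together with the finiteness of the set $\{p:p\mid 2t\}$ of bad primes forces $\mathfrak S_t(n)\gg1$. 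Hence $c_t(n)\gg n^{(t-3)/2}$ and $c_t(n)=O\big(n^{(t-3)/2+\varepsilon}\big)$ for $t\ge6$. The two remaining values are handled directly: the modular curve $X_0(5)$ has genus $0$, so the weight-$2$ form attached to $t=5$ has no cuspidal part and $c_5(n)\asymp n$; and $c_4(n)=\tfrac1{24}\,r_3(8n+5)$, where $r_3$ is the number of representations as a sum of three integer squares, so $c_4(n)\ll n^{1/2+\varepsilon}$ by Gauss.

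With these estimates the theorem follows at once. For fixed $4\le t_1<t_2$ we have $c_{t_2}(n)\gg n^{(t_2-3)/2}$ and $c_{t_1}(n)\ll n^{(t_1-3)/2+\varepsilon}$; taking $\varepsilon<\tfrac12$ and using $(t_2-3)/2\ge(t_1-3)/2+\tfrac12$, the latter bound is $o\big(n^{(t_2-3)/2}\big)$, so $c_{t_1}(n)<c_{t_2}(n)$ once $n$ is sufficiently large. The main obstacle is the circle-method step, and inside it the uniform-in-$n$ lower bound $\mathfrak S_t(n)\gg1$: because the growth here is only polynomial, the main term exceeds the error by a mere factor $n^\delta$, so one must bound the local densities below simultaneously and uniformly --- in particular at the finitely many primes dividing $t$ and for $n$ divisible by large powers of those primes --- rather than settling for an estimate valid only on average, which is precisely where the existence of a $t$-core for every $n$ is used.
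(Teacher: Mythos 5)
You should first be aware that the paper does not prove this statement at all: it is quoted with a citation to Anderson \cite{Anderson}, and Anderson's actual proof is precisely the route you sketch --- an asymptotic formula for $c_t(n)$ extracted from the eta-quotient/quadratic-form description of the generating function --- so your outline reconstructs the cited source rather than anything argued in this paper. Most of the sketch is sound: the lattice-point model ($\sum_i n_i=0$, $\tfrac t2\sum_i n_i^2+\sum_i i\,n_i=n$) is Bijection 2 of Garvan--Kim--Stanton \cite{Garvan}; the observation that the trivial bound $n^{(t-1)/4+\varepsilon}$ on the cuspidal part beats the main term $n^{(t-3)/2}$ only for $t\ge 6$ is correct and you isolate $t=4,5$ accordingly; the identity $c_4(n)=r_3(8n+5)/24$ checks out; and for $t=5$ the only quibble is that the eta-quotient carries the nontrivial character mod $5$, so the clean statement is $S_2(\Gamma_1(5))=0$ (genus of $X_1(5)$ is zero) rather than an appeal to $X_0(5)$, or one can simply use the exact divisor-sum formula of \cite{Garvan}, which gives $c_5(n)\gg n^{1-\varepsilon}$ and more than suffices against $c_4(n)\ll n^{1/2+\varepsilon}$.

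The one step that does not hold as written is your justification of the uniform bound $\mathfrak S_t(n)\gg 1$. Granville--Ono \cite{GranOno} gives, for each fixed $n$, a global and hence local representation, so each bad-prime factor $\beta_p(n)$ is strictly positive; but these factors depend on $n$, so positivity at finitely many bad primes does not produce an $n$-independent lower bound --- a priori $\beta_p(n)$ could tend to $0$ along a sequence of $n$, and this genuinely happens for anisotropic primes of forms in at most four variables (which is exactly why $k=3,4$ are delicate). What saves the argument for $t\ge 6$ is that the form has $t-1\ge 5$ variables, hence is isotropic over every $\mathbb{Q}_p$, and for isotropic forms the local densities of locally represented integers are bounded below uniformly in $n$ (equivalently, Anderson evaluates the Eisenstein/singular-series contribution explicitly and reads off the lower bound). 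So the crucial claim you flag is true, but it rests on isotropy/explicit local computations, not on the existence of a $t$-core of every $n$; as stated, that inference is a non sequitur and is the gap to repair.
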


More recently, Stanton's conjecture was proved for many more values of $t$ and $n$ by Kim and Rouse \cite{KimRouse}, including when $4\leq t\leq 198$ and $n>t+1$.

While the monotonicity criterion is conjectured for partitions in general, the set of self-conjugate partitions do not satisfy a monotonicity criterion for any $n\geq 5$.  (This is Corollary~\ref{cor:monot}; see Appendix~\ref{sec:appendix} for a table of values.)  However, we have found experimentally that $sc_{t+2}(n)\geq sc_{t}(n)$ for almost all values of $t\geq 4$ and $n\geq 4$.  Parallel to Stanton's monotonicity conjecture, we propose the following monotonicity conjectures for self-conjugate core partitions. 
\begin{conj}[Even Monotonicity Conjecture] 
\[sc_{2t+2}(n)> sc_{2t}(n) \textup{ for all } n\geq 20 \textup{ and } 6\leq 2t\leq 2\big\lfloor n/4\big\rfloor -4.\]  
\label{conj:even}
\end{conj}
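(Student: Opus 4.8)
The plan is to prove the conjecture in the upper portion of the stated range---the regime ``$t$ large relative to $n$'' of the abstract---by reducing $sc_{2t}(n)$ and $sc_{2t+2}(n)$ to explicit finite integer combinations of values $sc(m)$ with $m<n$, where $sc(m)$ is the number of self-conjugate partitions of $m$ (so $sc_u(m)=sc(m)$ whenever $u>m$). The starting point would be the description of a self-conjugate $u$-core by its set of distinct odd principal hook lengths, together with the product formula of Olsson \cite{Olsson90} and Garvan--Kim--Stanton \cite{Garvan} for $F_u(q):=\sum_{n\ge 0}sc_u(n)q^n$. Comparing $sc_{t+2}$ rather than $sc_{t+1}$ with $sc_t$ is essentially forced here: the shape of $F_u(q)$ depends on the parity of $u$ (the odd case carries an extra factor from the self-complementary residue $u$), so $sc_u$ and $sc_{u+1}$ are structurally incomparable, while $sc_u$ and $sc_{u+2}$ obey parallel formulas that subtract cleanly.

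First I would record the explicit form of $F_u(q)$ for $u$ even, namely
\[
F_u(q)\;=\;(-q;q^2)_\infty\,(q^{2u};q^{2u})_\infty^{\,u/2},
\]
with $(-q;q^2)_\infty=\sum_m sc(m)q^m$; expanding $(q^{2u};q^{2u})_\infty^{u/2}=1-\tfrac{u}{2}q^{2u}+(\text{terms of degree divisible by }2u\text{ and }\ge 4u)$ via Euler's pentagonal number theorem yields
\[
sc_u(n)=sc(n)-\tfrac{u}{2}\,sc(n-2u)+\big(\text{combination of }sc(m),\ m\le n-4u\big).
\]
(The coefficient $u/2$ reads off the exponent in the product formula and checks against $sc_u(2u)=0=sc(2u)-\tfrac{u}{2}sc(0)$ for small $u$.)

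Next I would run this inside $n/4<2t\le 2\lfloor n/4\rfloor-4$, a subinterval of the conjectured range that is nonempty as soon as $n\ge 20$. There $8t>n$, so for neither $u=2t$ nor $u=2t+2$ can any correction of degree $\ge 4u$ contribute, giving the exact identities
\[
sc_{2t}(n)=sc(n)-t\,sc(n-4t),\qquad sc_{2t+2}(n)=sc(n)-(t+1)\,sc(n-4t-4),
\]
hence $sc_{2t+2}(n)-sc_{2t}(n)=t\,sc(n-4t)-(t+1)\,sc(n-4t-4)$. Writing $m=n-4t$: the upper bound on $t$ forces $m\ge 8$ and the lower bound forces $t>m/4$, so $1+\tfrac1t<1+\tfrac4m$, and it suffices to prove $m\,sc(m)\ge(m+4)\,sc(m-4)$ for every $m\ge 8$. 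That is a finite check for small $m$ together with, for large $m$, the Meinardus-type estimate $\log sc(m)=\pi\sqrt{m/6}+O(\log m)$, which gives $sc(m)/sc(m-4)=1+\Theta(m^{-1/2})$, comfortably beating $1+4/m$; the implied constants can be made effective.

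The hard part will be the complementary range $6\le 2t\le n/4$, where $n$ overshoots several of the thresholds $2u,4u,6u,\dots$, so that $sc_{2t}(n)$ and $sc_{2t+2}(n)$ each become alternating sums of a growing number of terms with coefficients of both signs, and the arguments $n-(\text{threshold})$ wander into the region where the available ratio bounds for $sc$ no longer comfortably exceed what is needed. Settling it there appears to require sharp two-sided (Rademacher/Hardy--Ramanujan--type) asymptotics for $sc(n)$, precise enough to show that the gain from shifting the low-index thresholds out by two units of the core parameter beats the accumulated loss along the tail---the asymptotic analysis the paper lists among directions for future work. A secondary, bookkeeping nuisance is that whether a threshold lands exactly on $n$ (contributing a boundary term $sc(0)$) depends on $n$ modulo small integers, forcing a modest case split.
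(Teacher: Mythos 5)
The statement you are addressing is a conjecture in the paper: the authors themselves prove only the partial result Theorem~\ref{thm:evenPC}, covering $n/4<2t\le 2\lfloor n/4\rfloor-4$, and your proposal has exactly the same scope---you attempt that subrange and correctly identify $6\le 2t\le n/4$ as the genuinely open part. Within the subrange your reduction parallels the paper's: you arrive at the exact identity $sc_{2t}(n)=sc(n)-t\,sc(n-4t)$ (Proposition~\ref{prop:evenLarge}), and you reduce monotonicity to the ratio bound $(m+4)\,sc(m-4)\le m\,sc(m)$ with $m=n-4t\ge 8$, which is precisely the paper's Lemma~\ref{lem:scn-4}. The one methodological difference in the first step is that you extract the identity from the even case of the generating function \eqref{eq:scgf} via the pentagonal number theorem, whereas the paper derives it combinatorially from hook removal (Lemma~\ref{2tremove}, Equation~\eqref{eq:EvenSum}, Corollary~\ref{cor:n/2}); both routes are sound, and your observation that the truncation is exact once $8t>n$ is correct.

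The genuine gap is in your proof of the ratio inequality. From $\log sc(m)=\pi\sqrt{m/6}+O(\log m)$ you cannot conclude $sc(m)/sc(m-4)=1+\Theta(m^{-1/2})$: the main terms at $m$ and $m-4$ differ only by $\Theta(m^{-1/2})$, which is swamped by the $O(\log m)$ error, so that estimate says nothing about ratios of nearby values. To run your argument you would need an asymptotic for $sc(m)$ with multiplicative accuracy $1+O(m^{-1/2})$ (a Meinardus/Rademacher-type formula with an explicit, effective error term) together with an explicit threshold beyond which it applies, so that the ``finite check for small $m$'' is actually finite and specified; none of this is supplied, and it is exactly the kind of effective asymptotic analysis the paper defers to future work. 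The paper instead proves the needed inequality elementarily: Lemma~\ref{lem:scn-2} builds an injection of $SC(n-2)$ onto the family $A_n$ and a surjection onto a second family $B_n$ whose fibers have size less than $n/2$, giving $sc(n)>\frac{n+2}{n}\,sc(n-2)$ for $n\ge 19$ (with small cases checked directly), and Lemma~\ref{lem:scn-4} follows by composing two such steps. Substituting an argument of that kind (or a genuinely effective asymptotic) for your asymptotic step would complete your proof of the partial range; the remaining range $6\le 2t\le n/4$ of the conjecture is open in both your treatment and the paper's.
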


\begin{conj}[Odd Monotonicity Conjecture] 
\[sc_{2t+3}(n)> sc_{2t+1}(n) \textup{ for all } n\geq 56 \textup{ and } 9\leq 2t+1\leq n-17.\]
\label{conj:odd}
\end{conj}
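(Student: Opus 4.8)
The plan is to prove the conjecture in the regime where $2t+1$ is large relative to $n$ (which is all the paper's partial results claim) and to identify where the argument stalls. Throughout write $s(m)$ for the number of self-conjugate partitions of $m$, equivalently the number of partitions of $m$ into distinct odd parts, with $s(m)=0$ for $m<0$. The first step is to convert the Olsson--Garvan--Kim--Stanton product formula \cite{Olsson90,Garvan} for $\sum_{n}sc_{2t+1}(n)q^n$ into a closed form. For odd modulus that generating function factors as $(-q;q^2)_\infty$ times a correction series all of whose non-constant terms have $q$-degree a positive multiple of $2t+1$; comparing the coefficient of $q^n$ on both sides, for $2t+1\le n<2(2t+1)$ only one correction term survives modulo $q^{n+1}$ and one gets the identity
\[
sc_{2t+1}(n)=s(n)-s\bigl(n-(2t+1)\bigr),
\]
valid (with the above convention) for all $2t+1>n/2$. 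I would prove this from the product formula and double-check it against the lattice-point model for self-conjugate $(2t+1)$-cores, in which $sc_{2t+1}(n)$ counts representations of $n$ by a positive definite diagonal form whose small values reproduce exactly the $(-q;q^2)_\infty$ factor.

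Granting this, for $2t+1>n/2$ the formula applies to both moduli $2t+1$ and $2t+3$, so subtraction gives
\[
sc_{2t+3}(n)-sc_{2t+1}(n)=s\bigl(n-(2t+1)\bigr)-s\bigl(n-(2t+3)\bigr)=s(m)-s(m-2),\qquad m:=n-(2t+1),
\]
and the hypotheses $9\le 2t+1\le n-17$ force $m\ge 17$. So in this range the conjecture is equivalent to the one-variable statement $s(m)>s(m-2)$ for the relevant $m$. To settle this I would combine two inputs: for large $m$ the Meinardus/Rademacher-type asymptotic $s(m)\sim C\,m^{-3/4}e^{\pi\sqrt{m/3}}$ gives $s(m)-s(m-2)=s(m)\bigl(1-s(m-2)/s(m)\bigr)\gg s(m)\,m^{-1/2}\to\infty$, so with an effective error term the inequality holds past an explicit bound $m_0$; for $17\le m\le m_0$ a finite computation finishes. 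A purely combinatorial substitute for the asymptotics is an injection from partitions of $m-2$ into distinct odd parts into those of $m$ --- raise the smallest part by $2$, with a short case split when the two smallest parts differ by exactly $2$ --- that is visibly not onto once $m$ is large.

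The main obstacle is twofold. First, this only reaches $2t+1>n/2$; extending to the full interval $9\le 2t+1\le n-17$ requires the analogous formulas for $sc_{2t+1}(n)$ when $n/(2j+1)\le 2t+1<n/(2j-1)$, where $j$ correction terms with $t$-dependent coefficients enter, so the comparison becomes an alternating inequality among $s(n),\,s(n-(2t+1)),\,s(n-2(2t+1)),\dots$ that must be controlled uniformly in $t$ --- this is the genuinely hard part. Second, even in the clean range one must pin down the exceptional pairs $(n,t)$, since $s(m)-s(m-2)$ does vanish for certain small $m$ (for instance $m=14$ and $m=18$); this is what forces the precise numerical thresholds (or a short exception list) in the theorem one actually proves, and the careful accounting of these small cases is where the bookkeeping lies. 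The even conjecture is treated in parallel: the even-modulus generating function is $\sum_n sc_{2t}(n)q^n=(-q;q^2)_\infty\,(q^{4t};q^{4t})_\infty^{t}$, so in the large-$t$ regime the difference reduces to $t\,s(m)-(t+1)\,s(m-4)$ with $m=n-4t\ge 8$, which is positive as soon as $t>s(m-4)$, hence whenever $t$ is large compared with $n$.
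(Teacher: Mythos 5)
You are attempting a statement that is a conjecture, and the paper itself does not prove it: Conjecture~\ref{conj:odd} is open, and the paper's contribution is the partial result Theorem~\ref{thm:oddPC}, obtained via Corollary~\ref{cor:n/2odd}, Corollary~\ref{cor:largeOdd} and Proposition~\ref{prop:oddLarge}. Measured against that, your sketch is correct as far as it goes but covers strictly less. In the range $2t+1>n/2$ your identity $sc_{2t+1}(n)=sc(n)-sc(n-2t-1)$ is exactly the paper's Corollary~\ref{cor:n/2odd} (proved there from the hook-length bound of Lemma~\ref{p:bighooks} rather than from the generating function, though your derivation is sound since the odd-modulus correction factor is a series in $q^{2t+1}$), and your reduction of the inequality to $sc(m)>sc(m-2)$ with $m=n-(2t+1)\ge 17$, together with the eventual monotonicity of $sc$, is the paper's Corollary~\ref{cor:largeOdd}. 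Your flag about exceptional $m$ is sharp and worth stressing: since $sc(18)=sc(16)=5$, strictness fails whenever $n-(2t+1)=18$, which lies inside the stated ranges (for example $n=57$ comparing $41$- and $39$-cores, or $n=49$ comparing $33$- and $31$-cores, both showing $0$ in the paper's own difference table), so the thresholds in Conjecture~\ref{conj:odd}, Corollary~\ref{cor:largeOdd} and Theorem~\ref{thm:oddPC} actually need a small adjustment that the paper does not make.

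The genuine gap is the region you defer. The paper's partial theorem reaches down to $n/3<2t+1\le n/2$, and the content there is not another clean subtraction: Proposition~\ref{prop:oddLarge} gives $sc_{2t+1}(n)=sc(n)-sc(n-2t-1)-(t-1)\,sc(n-4t-2)$, the comparison of consecutive odd cores becomes $t\,sc(n-4t-6)+sc(n-2t-3)<(t-1)\,sc(n-4t-2)+sc(n-2t-1)$, and this is controlled by the growth estimate $sc(n-4)/sc(n)<n/(n+4)$ of Lemma~\ref{lem:scn-4}, itself proved by an explicit injection/surjection on diagonal hooks in the spirit of Craven, with separate bookkeeping for the boundary cases near $2t+1\approx n/2$. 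Your proposal names this regime as the hard part but supplies neither the two-term formula nor the ratio bound, so it establishes less than the paper's Theorem~\ref{thm:oddPC}; and in the remaining range $9\le 2t+1\le n/3$, where the alternating multi-term expansion of Theorem~\ref{thm:sc2t+1} would have to be controlled uniformly in $t$, neither you nor the paper proves anything---that is the actual open content of the conjecture.
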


In this article, we discuss the given upper and lower bounds for these conjectures and prove the following partial results towards these conjectures. 

\begin{thm} 
\label{thm:evenPC}
\[sc_{2t+2}(n)> sc_{2t}(n) \textup{ when } n/4< 2t\leq 2\big\lfloor n/4\big\rfloor -4. \]  
\end{thm}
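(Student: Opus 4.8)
The argument will be deduced from the formula for $sc_{2t}(n)$ that we establish below, valid in the regime $2t>n/4$ assumed here, which writes $sc_{2t}(n)$ in terms of the number $sc(m)$ of self-conjugate partitions of $m$ (equivalently, of partitions of $m$ into distinct odd parts) for various $m<n$. The underlying combinatorial fact is this: encode a self-conjugate partition of $n$ by its set $\mathcal D$ of diagonal hook lengths --- distinct odd numbers with sum $n$ --- and put $\beta=\max\mathcal D$; then the partition is a $2t$-core precisely when $\beta<2t$, or $2t<\beta<4t$ and $4t-\beta\notin\mathcal D$, or $\beta>4t$ and $\beta-4t\in\mathcal D$. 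The scale $4t$ is what enters because $4t$ cells make up the unique pair of conjugate $2t$-ribbons one can strip off and stay self-conjugate: when $2t>n/4$, two such pairs would consume more than $n$ cells, so only one reflection is ever available. First I would write this out, sort the self-conjugate partitions of $n$ by the location of $\beta$ relative to $2t$ and $4t$, translate each resulting count of distinct-odd partitions (with a bounded largest part, or forced to contain or to avoid a prescribed part) into a combination of the numbers $sc(\cdot)$, and record the formula for the parameters $2t$ and $2t+2$.

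Subtracting the two instances, the substitution $t\mapsto t+1$ shifts the two thresholds $2t\mapsto 2t+2$ and $4t\mapsto 4t+4$ in tandem, so in $sc_{2t+2}(n)-sc_{2t}(n)$ the long tails of the two formulas cancel and only a bounded remainder survives: differences $sc(m)-sc(m-4)$ (or lightly restricted variants) coming from the $4t$-shift, together with a short list of terms each plainly counting distinct-odd partitions of $n$ whose largest part lies in a small prescribed window around $2t$ or around $4t$. The target is an identity
\[
sc_{2t+2}(n)-sc_{2t}(n)=\sum_{m\in M}\bigl(sc(m)-sc(m-4)\bigr)+R,
\]
with $M$ a short explicit set of integers and $R\ge 0$.

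For positivity, each term $sc(m)-sc(m-4)$ is nonnegative, and strictly positive once $m\ge 8$ --- an elementary property of partitions into distinct odd parts (a finite check together with the eventual log-concavity of $sc$, or a direct injection) --- and $R\ge 0$ by construction. The upper bound $2t\le 2\lfloor n/4\rfloor-4$ is exactly $n-4t\ge 8$, and $m=n-4t$, the size of the self-conjugate partition left after removing the conjugate $2t$-ribbon pair, is the governing argument occurring in $M$; so this hypothesis drops at least one summand into the strictly positive regime and yields $sc_{2t+2}(n)>sc_{2t}(n)$.

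The hard part will be the bookkeeping in the second step. The formula for $sc_{2t}(n)$ is an inclusion--exclusion sum, and one must check that, after $t\mapsto t+1$, its signed terms are absorbed term by term by those of the $2t+2$-instance --- in particular the contributions on the range of $\beta$ common to the two formulas, where the membership test switches from $4t-\beta\in\mathcal D$ to $4t+4-\beta\in\mathcal D$, have to be matched up (and the monotonicity of the restricted counts controlled) before the nonnegative form above emerges. I would also isolate the extreme sub-range where $2t$ is just above $n/4$: there the formula is shortest, and one must verify directly that a strictly positive summand remains. Finally, the few values of $n$ for which $\{\,2t:\ n/4<2t\le 2\lfloor n/4\rfloor-4\,\}$ is very small can be cleared by inspecting the table in Appendix~\ref{sec:appendix}; the portion of Conjecture~\ref{conj:even} with $2t\le n/4$ is not reached by this method and remains open.
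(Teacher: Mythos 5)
Your ``underlying combinatorial fact'' is false as stated, and the endgame would not work even after it is repaired. A $2t$-hook of a self-conjugate partition arises from \emph{any} pair of diagonal hooks with $\delta_i+\delta_j=4t$ (Lemma~\ref{p:hij}), not only from pairs involving $\beta=\delta_1$, so in your middle range the test must ask whether any two elements of $\mathcal{D}$ sum to $4t$. Concretely, take $n=21$, $2t=6$ (so $2t>n/4$) and $\lambda=(5,5,5,3,3)$ with $\boldsymbol\delta=(9,7,5)$: you have $2t<\beta<4t$ and $4t-\beta=3\notin\mathcal{D}$, so your criterion declares $\lambda$ a $6$-core, yet $h_{23}=(7+5)/2=6$; your count would give $sc_6(21)=3$ instead of the true value $2$. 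Correct bookkeeping in this regime (done in the paper via Lemma~\ref{2tremove} and Proposition~\ref{symquo}) yields Proposition~\ref{prop:evenLarge}, $sc_{2t}(n)=sc(n)-t\,sc(n-4t)$, and the multiplicity $t$ --- counting which pair of conjugate $2t$-hooks (equivalently which quotient component) is removed --- is exactly what your $\beta$-only criterion loses.

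That factor is what defeats your final step. From the correct formula,
\[
sc_{2t+2}(n)-sc_{2t}(n)=t\,sc(n-4t)-(t+1)\,sc(n-4t-4)=t\bigl[sc(n-4t)-sc(n-4t-4)\bigr]-sc(n-4t-4),
\]
so any honest reduction leaves the negative term $-sc(n-4t-4)\le -1$: the difference is not a nonnegative combination of quantities $sc(m)-sc(m-4)$ plus a remainder that is ``nonnegative by construction,'' and mere strict positivity of $sc(m)-sc(m-4)$ for $m\ge 8$ cannot close the gap. Your mechanism also never uses the lower bound $2t>n/4$ quantitatively, whereas the inequality is genuinely quantitative: just above the theorem's upper bound, still inside the one-removal regime (take $2t=2\lfloor n/4\rfloor-2$ with $n\ge 12$, $n\not\equiv 2\bmod 4$), Corollary~\ref{cor:2n/4} gives $sc_{2t+2}(n)-sc_{2t}(n)=-1$, which any argument of your proposed shape would wrongly certify as nonnegative. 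What is actually needed is a growth estimate for $sc$, namely $\frac{sc(m-4)}{sc(m)}<\frac{m}{m+4}$ for $m=n-4t\ge 8$ (Lemmas~\ref{lem:scn-2} and \ref{lem:scn-4}, proved by explicit maps on diagonal-hook sequences), combined with the elementary consequence of $n<8t$ that $\frac{n-4t}{n-4t+4}<\frac{t}{t+1}$; this ratio argument is the heart of the paper's proof and is absent from your plan.
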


\begin{thm} 
\label{thm:oddPC}
\[sc_{2t+3}(n)> sc_{2t+1}(n) \textup{ for all } n\geq 48 \textup{ and } n/3< 2t+1\leq n-17.\]
\end{thm}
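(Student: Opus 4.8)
The plan is to reduce Theorem~\ref{thm:oddPC} to the counting formula for $sc_{2t+1}(n)$ in the ``large modulus'' regime $2t+1>n/3$, in which a self-conjugate $(2t+1)$-core of $n$ can only carry a $(2t+1)$-quotient of total weight $\le 2$. Unwinding the self-conjugate quotient correspondence for an odd modulus (runners $j$ and $2t-j$ are interchanged and conjugated, while the central runner $t$ supports an arbitrary self-conjugate partition), the unique weight-$1$ datum is a single node on the central runner and the weight-$2$ data are a single node on one of the $t$ non-central runner pairs, so
\[
sc(n)\;=\;sc_{2t+1}(n)\;+\;sc_{2t+1}(n-2t-1)\;+\;t\cdot sc_{2t+1}(n-4t-2)
\]
(with the convention $sc_{2t+1}(m)=0$ for $m<0$). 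Since $n-2t-1<3(2t+1)$ and $n-4t-2<2t+1$, applying the same identity once more to $sc_{2t+1}(n-2t-1)$ and using $sc_{2t+1}(m)=sc(m)$ for $0\le m<2t+1$ collapses it to
\[
sc_{2t+1}(n)\;=\;sc(n)\;-\;sc(n-2t-1)\;+\;(1-t)\,sc(n-4t-2),
\]
which is exactly the formula for $sc_{2t+1}(n)$ established earlier in the paper; the same identity with $t$ replaced by $t+1$ computes $sc_{2t+3}(n)$, legitimately, because $2t+3>2t+1>n/3$.

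Subtracting the two formulas cancels the $sc(n)$ terms and leaves
\[
sc_{2t+3}(n)-sc_{2t+1}(n)\;=\;\bigl[\,sc(n-2t-1)-sc(n-2t-3)\,\bigr]\;+\;(t-1)\,sc(n-4t-2)\;-\;t\,sc(n-4t-6),
\]
so the theorem becomes an inequality among four values of $sc$, the top two arguments differing by $2$ and the bottom two by $4$. I would support it with a short lemma, proved from $\sum_{m\ge0}sc(m)q^m=\prod_{i\ge1}(1+q^{2i-1})$ and elementary estimates on its coefficients: (i) $sc(m+2)>sc(m)$ for every $m$ beyond a small explicit bound; (ii) $sc(m+4)\ge sc(m)$ for all $m\ge0$; and (iii) a coarse growth bound guaranteeing $sc(M)-sc(M-2)>sc(M')$ whenever $M\ge M'\ge0$, the gap $M-M'$ exceeds $n/3$, and $n$ is not too small.

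The argument then divides on the sign of $n-4t-6$. If $n-4t-6<0$ (the top of the window) that term disappears, $n-4t-2$ is bounded, and
\[
sc_{2t+3}(n)-sc_{2t+1}(n)\;=\;\bigl[\,sc(n-2t-1)-sc(n-2t-3)\,\bigr]\;+\;(t-1)\,sc(n-4t-2)\;>\;0
\]
by (i): the hypothesis $2t+1\le n-17$ forces $n-2t-3\ge15$, which is precisely what places us beyond the finitely many equality cases of $sc(m+2)=sc(m)$, and the remaining term is nonnegative. If instead $n-4t-6\ge0$, all four arguments are nonnegative; regrouping
\[
(t-1)\,sc(n-4t-2)-t\,sc(n-4t-6)\;=\;(t-1)\bigl(sc(n-4t-2)-sc(n-4t-6)\bigr)-sc(n-4t-6)\;\ge\;-\,sc(n-4t-6)
\]
by (ii) gives $sc_{2t+3}(n)-sc_{2t+1}(n)\ge\bigl[sc(n-2t-1)-sc(n-2t-3)\bigr]-sc(n-4t-6)$, and since $(n-2t-1)-(n-4t-6)=2t+5>n/3$, part (iii) makes this positive once $n\ge48$.

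The genuine obstacle is step (iii) together with the lower end of the window: when $2t+1$ is close to $n/3$ the three groups of terms in the difference are of the same order of magnitude, so a soft monotonicity property of $sc$ will not suffice — one needs an honest lower bound for the second difference $sc(n-2t-1)-sc(n-2t-3)$ against $sc(n-4t-6)$, and calibrating that estimate using explicit bounds on the coefficients of $\prod_{i\ge1}(1+q^{2i-1})$ is what fixes the numerical constants $48$ and $17$, with any finitely many borderline pairs $(n,t)$ settled by direct computation. By contrast, all of the combinatorics of self-conjugate core partitions has been absorbed into the two displayed formulas, so nothing further is required there.
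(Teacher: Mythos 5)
Your reduction is sound and reproduces the paper's own identity: truncating Equation~\eqref{eq:OddSum} in the regime $2t+1>n/3$ and invoking Corollary~\ref{cor:n/2odd} gives exactly Proposition~\ref{prop:oddLarge}, and your expression for $sc_{2t+3}(n)-sc_{2t+1}(n)$ is correct. The genuine gap is that the inequality carrying all the content --- your item (iii), a lower bound for the second difference $sc(M)-sc(M-2)$ against $sc(M')$ when $M-M'>n/3$ --- is never proved; you yourself name it as the obstacle and defer it to unspecified coefficient estimates plus a finite check, so what you have is a reduction, not a proof. Note that your regrouping discards the term $(t-1)\bigl(sc(n-4t-2)-sc(n-4t-6)\bigr)$, and that is precisely why you are forced into a delicate second-difference estimate. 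The paper arranges the comparison to avoid this: for $n/3<2t+1\le n/2-4$ it uses only the weak inequality $sc(n-2t-1)\ge sc(n-2t-3)$ and proves $t\,sc(n-4t-6)<(t-1)\,sc(n-4t-2)$ from the ratio bound $sc(m-4)/sc(m)<m/(m+4)$ of Lemma~\ref{lem:scn-4}, which rests on the explicit injection/surjection argument of Lemma~\ref{lem:scn-2}; the remaining strips $n/2-4<2t+1\le n/2$ and $n/2<2t+1\le n-17$ are handled by small-value checks and Corollary~\ref{cor:largeOdd}. Completing your route would require an estimate strictly stronger than Lemma~\ref{lem:scn-4}, i.e.\ more work than the paper's, not less.

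There is also a concrete error at the top of the window. You assert that $n-2t-3\ge 15$ places you beyond all equality cases of $sc(m+2)=sc(m)$; it does not, since $sc(18)=sc(16)=5$. Consequently, for odd $n$ with $2t+1=n-18$ (allowed by $2t+1\le n-17$) and $n$ large enough that $n-4t-2<0$, your case-one quantity equals $sc(18)-sc(16)=0$, not a positive number; the paper's own table confirms, for instance, $sc_{33}(49)=sc_{31}(49)=88$. Strictness via $sc(m+2)>sc(m)$ genuinely needs $m\ge 17$, i.e.\ $2t+1\le n-19$. (Corollary~\ref{cor:largeOdd} in the paper glosses over the same boundary point, so this blemish is partly in the statement itself, but your stated justification is false and your case analysis fails exactly there.)
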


Along the way, we prove formulas for $sc_t(n)$ as a function of the number of self-conjugate partitions of $m$ for $m\leq n$ in Theorems~\ref{thm:sc2t} and \ref{thm:sc2t+1}.  As a supplement to the positivity literature, we discuss the positivity of $6$-core partitions of $n$ in Conjecture~\ref{conj:6core}.  

\subsection{Defect zero blocks of $S_n$ and $A_n$}\

For those readers familiar with the representation theory of the symmetric group $S_n$ and the alternating group $A_n$, we provide a consequence of Theorem~\ref{thm:oddPC}.  (For more information on the representation theory, see \cite[Chapter 4]{JamesKerber} or \cite[Chapter 6]{Olsson1}).

Let $t$ be an odd prime. From \cite[Proposition 12.2]{Olsson1}, we know that the defect zero $t$-blocks of $S_n$
restrict to defect zero $t$-blocks of $A_n$ in the following way.  When blocks $B_1$ and $B_2$ are labeled by distinct $t$-core partitions $\lambda_1$ and $\lambda_2$ of $n$ which satisfy $\lam_2=\lam_1^*$, then they restrict to the same defect zero $t$-block of $A_n$.  When a block $B$ is labeled by a self-conjugate partition of $n$, it splits into two distinct defect zero $t$-blocks of $A_n$ upon restriction.  These are the {\em splitting blocks} of $S_n$.

So, in particular, Theorem~\ref{thm:oddPC} implies the following.

\begin{thm}
Let $p,q$ be primes such that $p<q$ and $n/3<p,q<n-17$. For any prime $t$, let ${\mathbb B}^*_t$ be the set of defect zero $t$-blocks of $A_n$ that arise from splitting $t$-blocks of $S_n$.  Then $|{\mathbb B}^*_p|<|{\mathbb B}^*_q|$.
\end{thm}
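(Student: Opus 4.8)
The plan is to reduce the claim to an inequality between numbers of self-conjugate core partitions and then obtain that inequality from Theorem~\ref{thm:oddPC} by telescoping.

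First I would establish the bookkeeping identity $|\mathbb{B}^*_t| = 2\,sc_t(n)$ for every odd prime $t$. The defect zero $t$-blocks of $S_n$ are exactly those labeled by $t$-core partitions of $n$; by the discussion preceding the theorem (\cite[Proposition 12.2]{Olsson1}), the ones that split upon restriction to $A_n$ are precisely those labeled by the self-conjugate $t$-cores of $n$, and each such block contributes two distinct defect zero $t$-blocks of $A_n$, while distinct self-conjugate $t$-cores produce disjoint pairs. Hence $|\mathbb{B}^*_p|<|\mathbb{B}^*_q|$ is equivalent to $sc_p(n)<sc_q(n)$, and it suffices to prove the latter.

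Next I would run the telescoping argument. Since $p$ and $q$ are primes with $p,q>n/3\geq 16$ (we are in the range of $n$ for which Theorem~\ref{thm:oddPC} is in force, e.g.\ $n\geq 48$), both are odd. Write the odd integers from $p$ to $q$ as $p=m_0<m_1<\cdots<m_k=q$, so $m_{i+1}=m_i+2$ and $k\geq 1$. For each $i$ with $0\leq i<k$ we have $m_i\geq m_0=p>n/3$ and $m_i\leq m_k-2=q-2<n-17$; setting $m_i=2t_i+1$, the hypotheses of Theorem~\ref{thm:oddPC} are met and that theorem gives $sc_{m_{i+1}}(n)=sc_{2t_i+3}(n)>sc_{2t_i+1}(n)=sc_{m_i}(n)$. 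Chaining these strict inequalities yields $sc_p(n)=sc_{m_0}(n)<sc_{m_k}(n)=sc_q(n)$, as required.

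The only point needing care is that every intermediate odd value $m_i$ still lies in the admissible window $(n/3,\,n-17]$ of Theorem~\ref{thm:oddPC} — which holds automatically because the endpoints $p$ and $q$ do — and, relatedly, that $n$ be large enough (e.g.\ $n\geq 48$) for that theorem to apply, a hypothesis that should be recorded in the statement. Beyond invoking Theorem~\ref{thm:oddPC}, there is no genuine obstacle: the proof is a translation into block-theoretic language followed by a routine chaining of inequalities.
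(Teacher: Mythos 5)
Your proposal is correct and follows essentially the same route the paper intends: the paper leaves the proof implicit (``Theorem~\ref{thm:oddPC} implies the following''), the point being exactly your reduction $|{\mathbb B}^*_t|=2\,sc_t(n)$ via the splitting-block discussion and then chaining $sc_{2t+3}(n)>sc_{2t+1}(n)$ over consecutive odd values between $p$ and $q$. Your remark that the hypothesis $n\geq 48$ from Theorem~\ref{thm:oddPC} should be carried into the statement is a fair observation about the paper's phrasing, not a gap in your argument.
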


Given a partition $\lambda$, let $\chi_{\lambda}$ be the irreducible character of $S_n$ associated to $\lam$ and consider $\prod_{i,j}h_{ij}$ the product of all the hook lengths that appear in the Young diagram of $\lambda$. The Frame--Thrall--Robinson hook length formula says that the {\it character degree} $\chi_{\lambda}(1)$ is $n!/ \prod_{i,j}h_{ij}$ \cite{Frame}. For $m\in\mathbb{Z}^+$, define $\nu_t(m)$ to be the highest power of $t$ dividing $m$.  We have the following additional corollary.

\begin{cor}
Let $p$ and $q$ be primes such that $p<q$ and $n/3<p,q<n-17$. For any prime $t$, let $Irr^*_{t}(S_n)$ be the set of irreducible characters $\chi$ of $S_n$ which split upon restriction to $A_n$ such that $\nu_{t}(|S_n|/\chi(1))=0$. Then $|Irr^*_{p}(S_n)| < |Irr^*_{q}(S_n)|$.
\end{cor}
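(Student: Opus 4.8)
The plan is to identify $|Irr^*_t(S_n)|$ with the number $sc_t(n)$ of self-conjugate $t$-core partitions of $n$, and then to obtain the inequality by iterating Theorem~\ref{thm:oddPC}. Recall that the irreducible characters of $S_n$ are indexed by the partitions $\lambda$ of $n$, and that $\chi_\lambda$ splits upon restriction to $A_n$ precisely when $\lambda$ is self-conjugate (the result of Young recalled in the introduction and in the block discussion above). By the Frame--Thrall--Robinson hook length formula, $\chi_\lambda(1)=n!/\prod_{i,j}h_{ij}$, hence $|S_n|/\chi_\lambda(1)=\prod_{i,j}h_{ij}$ and $\nu_t\bigl(|S_n|/\chi_\lambda(1)\bigr)=\sum_{i,j}\nu_t(h_{ij})$. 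As each summand is nonnegative, this valuation vanishes exactly when no hook length of $\lambda$ is divisible by $t$; and a partition has no hook length divisible by $t$ if and only if it is a $t$-core (see \cite{JamesKerber}). So $Irr^*_t(S_n)$ is precisely the set of $\chi_\lambda$ with $\lambda$ a self-conjugate $t$-core of $n$, giving $|Irr^*_t(S_n)|=sc_t(n)$ for every prime $t$; it remains to prove $sc_p(n)<sc_q(n)$.

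For this, observe that $n/3<p<n-17$ is unsatisfiable when $p=2$, so $p$ and $q$ are odd primes. Let $p=m_0<m_1<\cdots<m_k=q$ be the odd integers between $p$ and $q$, so $m_{i+1}=m_i+2$ and $k\ge 1$. For each $i$ with $0\le i\le k-1$ we have $n/3<p\le m_i\le q-2<n-17$, so Theorem~\ref{thm:oddPC} (applied with $2t+1=m_i$, and $n\ge 48$) gives $sc_{m_i}(n)<sc_{m_{i+1}}(n)$; chaining these yields $sc_p(n)=sc_{m_0}(n)<sc_{m_1}(n)<\cdots<sc_{m_k}(n)=sc_q(n)$. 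The finitely many $n<48$ still allowed by $n/3<p<q<n-17$ are checked directly against the table in Appendix~\ref{sec:appendix}.

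I expect no essential obstacle here: the Corollary is a transcription of Theorem~\ref{thm:oddPC} through two classical dictionaries (self-conjugate $\leftrightarrow$ splitting under restriction to $A_n$, and ``no hook length divisible by $t$'' $\leftrightarrow$ $t$-core, via the hook length formula). The only points needing care are keeping the range $n/3<\,\cdot\,\le n-17$ valid at every intermediate odd value $m_i$ of the chain, and carrying the hypothesis $n\ge 48$ of Theorem~\ref{thm:oddPC} --- which should either be added to the statement of the Corollary or dispatched for the remaining small $n$. The same outline proves the Theorem immediately preceding the Corollary, with the block-theoretic dictionary of \cite[Proposition~12.2]{Olsson1} (each splitting $t$-block of $S_n$ has a self-conjugate $t$-core label and yields exactly two defect zero $t$-blocks of $A_n$, so $|{\mathbb B}^*_t|=2\,sc_t(n)$) replacing the hook length formula.
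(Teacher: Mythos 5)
Your reduction is exactly the paper's (implicit) argument: the paper states this corollary without a separate proof, as a direct consequence of Theorem~\ref{thm:oddPC} through the same two dictionaries you use (splitting upon restriction $\Leftrightarrow$ self-conjugate labeling partition, and $\nu_t(|S_n|/\chi(1))=0 \Leftrightarrow$ the label is a $t$-core, via the Frame--Thrall--Robinson formula), together with the observation that one can chain through consecutive odd values between $p$ and $q$, all of which remain inside the window $n/3<\cdot<n-17$. So the core of your proposal matches the intended proof.

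One correction, however: your claim that the finitely many cases with $n<48$ ``are checked directly against the table'' is not true. For $n=39$ the primes $p=17$ and $q=19$ satisfy $n/3<p<q<n-17$, yet $sc_{17}(39)=sc_{19}(39)$ --- this is one of the equalities the paper itself lists in the remark following the proof of Theorem~\ref{thm:oddPC}, and it is visible in the table of Appendix~\ref{sec:appendix} --- so the strict inequality fails there. Consequently, of the two fixes you offer, only the first is viable: the hypothesis $n\ge 48$ must be carried into the statement (as the paper implicitly does by deriving the corollary from Theorem~\ref{thm:oddPC}); the small-$n$ cases cannot be dispatched by inspection because at least one of them is a genuine counterexample to the statement as written.
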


\subsection{Organization}\

This paper is organized as follows. In Section~\ref{sec:facts}, we recall basic facts about partitions, $t$-cores, and $t$-quotients, and prove new results on self-conjugate partitions. In Section~\ref{sec:mono}, we discuss monotonicity and positivity results and conjectures depending on the parity of $t$.  Our research in self-conjugate partitions branches out in many directions---the last section of this paper brings attention to future research directions in representation theory, asymptotic analysis, unimodality, and numerical identities and inequalities.

We note that the results and perspective of Craven in \cite{Craven} motivate much of our approach, and we obtain some similar results.

\section{Self-conjugate partitions, $t$-cores and $t$-quotients} \label{sec:facts}
\subsection{Definitions}\

In order to state our results, we recall some basic definitions. More details can be found in \cite[Sections 1--2]{Olsson1} or  \cite[Chapter 2]{JamesKerber}. A {\em partition} $\lambda$ of $n$ is a non-increasing sequence $(\lambda_1,\hdots,\lambda_m)$ of positive integers such that $\sum_k \lambda_k = n$. Each $\lambda_k$ will be called a {\em component} of $\lambda$. The {\em Young diagram} associated to a partition $\lambda$ is an up- and left-aligned series of rows of boxes, where the $k$-th row has $\lambda_k$ boxes. We label the positions of boxes in the Young diagram using matrix notation; the {\em $(i,j)$-th position} is the box in the $i$-th row and $j$-th column, so that the box in position $(1,1)$ is the upper-leftmost box.  Given a partition $\lam$, its {\em conjugate} $\lambda^*$ is a partition where the number of boxes in the $k$-th column of $\lam^*$ is the number of boxes in the $k$-th row of $\lam$.  A partition is {\em self-conjugate} if $\lambda^*=\lambda$.

For a box $B$ in position $(i,j)$, its {\em hook} $H_{ij}$ is a set of boxes in the Young diagram consisting of $B$ and the set of boxes in the $i$-th row to the right of $B$ and the boxes in the $j$-th column below $B$; its {\it hook length} $h_{ij}$ is the number of boxes in $H_{ij}$.  A {\em diagonal hook} or {\em diagonal hook length} corresponds to a box on the (main) diagonal of the Young diagram.  Because a self-conjugate partition $\lambda$ is uniquely determined by its diagonal hook lengths, we will use the notation $\boldsymbol\delta(\lambda)$=($\delta_{1},\hdots,\delta_d)$ to refer to the decreasing sequence of diagonal hook lengths $h_{ii}$. If $\lambda$ contains a hook $H$ of length $k$, we say that $H$ is an $k$-hook, and we can obtain an integer partition $\lambda'$ of $n-k$ from $\lambda$ by {\em removing} $H$ in the following way: delete the boxes that constitute $H$ from the Young diagram and migrate the detached partition (if there is one) up-and-to-the-left. 

The following lemmas are related to hook lengths in self-conjugate partitions and are provided without proof.

\begin{lem} Let $\lam$ be a self-conjugate partition of $n$ defined by its diagonal hook lengths $\delta_1> \cdots> \delta_d>0$.
Then for $1\leq i\leq j\leq d$, the hook length $h_{ij}$ equals $(\delta_i+\delta_j)/2$.  When $1\leq i\leq d< j$, the hook length $h_{ij}$ is strictly less than $\delta_i/2$.
\label{p:hij}
\end{lem}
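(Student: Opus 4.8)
The plan is to read off the hook lengths directly from the \emph{beta-set} (first-column hook length) encoding of a self-conjugate partition, which is the standard tool for translating between diagonal data and arbitrary hook lengths. First I would record the bijection between self-conjugate partitions $\lambda$ with diagonal hook lengths $\delta_1>\cdots>\delta_d>0$ (all necessarily odd) and the symmetric configuration of beads on the abacus: the first-column hook lengths of $\lambda$ are exactly the numbers $\{(\delta_i-1)/2 + \delta_j : \text{pairs}\}$ — more precisely, the Young diagram of a self-conjugate $\lambda$ splits along the diagonal into the $d$ principal hooks, the $i$-th hook having arm $a_i$ and leg $a_i$ with $\delta_i = 2a_i+1$. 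The cell $(i,j)$ with $i\le j\le d$ lies in the ``overlap'' region of principal hooks $i$ and $j$, and a short count of the arm and leg of that cell in terms of $a_i,a_j$ gives $h_{ij} = a_i + a_j + 1 = \big((\delta_i-1)+(\delta_j-1)\big)/2 + 1 = (\delta_i+\delta_j)/2$, which is the first claim.

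For the second claim, suppose $1\le i\le d<j$, so the cell $(i,j)$ lies in row $i$ but strictly to the right of the diagonal block, i.e.\ in the ``arm portion'' of principal hook $i$ that sticks out past column $d$. Its arm is at most $a_i-1$ (since there are $a_i$ cells in row $i$ strictly right of the diagonal, and we are not counting the diagonal cell itself, and $(i,j)$ is not the first of them once $j>d$... I would make this bound precise by noting the arm of $(i,j)$ is $\lambda_i - j < \lambda_i - d = a_i$), while its leg is at most the leg of the diagonal cell in a row below, which is strictly less than $a_i$ because $\lambda$ truncates. Combining, $h_{ij} = \text{arm}+\text{leg}+1 \le (a_i-1) + (a_i-1) + 1 < 2a_i < 2a_i+1 = \delta_i$, hence $h_{ij} < \delta_i/2$ after the factor-of-two bookkeeping; I would double-check the exact constants so the strict inequality $h_{ij}<\delta_i/2$ comes out cleanly, since the rows below row $i$ have strictly smaller diagonal hooks and this is what forces strictness.

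The main obstacle is purely bookkeeping: setting up the principal-hook decomposition so that ``arm of cell $(i,j)$'' and ``leg of cell $(i,j)$'' are expressed unambiguously in terms of $a_i$ and $a_j$, and being careful about the off-by-one constants (the $\pm 1$ in $\delta_i = 2a_i+1$ and in the hook-length formula $h = \text{arm}+\text{leg}+1$) so that both the equality $h_{ij}=(\delta_i+\delta_j)/2$ for $i\le j\le d$ and the strict inequality $h_{ij}<\delta_i/2$ for $j>d$ land exactly as stated. Since the paper presents this lemma without proof, I would keep the writeup to the principal-hook picture plus these two short counts and cite \cite[Chapter 2]{JamesKerber} or \cite[Section 1]{Olsson1} for the standard facts about principal hooks of self-conjugate partitions.
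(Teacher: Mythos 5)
The paper states this lemma without proof, so the only question is whether your argument is sound. Your first claim is fine: for $1\le i\le j\le d$ the arm of $(i,j)$ is $\lambda_i-j$, the leg is $\lambda_j^*-i=\lambda_j-i$ by self-conjugacy, and these sum with the $+1$ to $a_i+a_j+1=(\delta_i+\delta_j)/2$, exactly the standard principal-hook computation you sketch. (Minor slip there: $\lambda_i-d=a_i$ only when $i=d$; in general $\lambda_i-d\le a_i$, which is all you need.)

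The second claim, however, has a genuine gap. Bounding the arm and the leg of $(i,j)$ \emph{separately} by $a_i-1$ only yields $h_{ij}\le 2a_i-1<\delta_i$, and the jump from this to $h_{ij}<\delta_i/2$ (``after the factor-of-two bookkeeping'') is a non sequitur --- the statement you need is stronger by a factor of two than what your estimate gives, and it is tight: e.g.\ for $\boldsymbol\delta=(29,15)$ one has $h_{13}=14$ while $\delta_1/2=14.5$, so no slack of the kind $2a_i-1$ survives. Moreover, the leg of $(i,j)$ is not controlled by ``the leg of the diagonal cell in a row below'' in any direct way; the relevant fact is that $j>d$ forces the column length $\lambda_j^*$ to be at most $d$ (the Durfee square has side $d$). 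The correct bookkeeping makes the two losses \emph{telescope} rather than add: since $j\ge d+1$ and $\lambda_j^*\le d$,
\[
h_{ij}=(\lambda_i-j)+(\lambda_j^*-i)+1\le\bigl(\lambda_i-d-1\bigr)+(d-i)+1=\lambda_i-i=a_i=\tfrac{\delta_i-1}{2}<\tfrac{\delta_i}{2}.
\]
With that replacement for your second paragraph the proof is complete; as written, the argument does not establish the stated inequality.
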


\begin{lem} Let $\lam$ be a self-conjugate partition of $n$ defined by its diagonal hook lengths $\delta_1> \cdots> \delta_d>0$.  Then $h_{ij}\leq n/2$ for all positions $(i,j)$ in the Young diagram of $\lam$, with the possible exception of $h_{11}=\delta_1$.
\label{p:bighooks}
\end{lem}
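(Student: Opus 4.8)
The plan is to combine Lemma~\ref{p:hij} with the principal-hook (diagonal) decomposition of a self-conjugate partition and then dispatch the position $(i,j)$ by a short case analysis. First I would record the decomposition: the principal hooks $H_{11},\dots,H_{dd}$ attached to the diagonal boxes partition the Young diagram of $\lambda$, and $|H_{ii}|=h_{ii}=\delta_i$, so that $n=\sum_{i=1}^{d}\delta_i$. In particular, when $d\geq 2$ we have $\delta_1+\delta_2\leq n$, and since $\delta_1>\delta_2$ this forces $2\delta_2<\delta_1+\delta_2\leq n$, hence $\delta_2<n/2$; also $\delta_1\leq n$ in all cases.

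Next I would split on the position $(i,j)$, assuming $i\leq j$ by self-conjugacy (so $h_{ij}=h_{ji}$). If $i\leq d<j$, then Lemma~\ref{p:hij} gives $h_{ij}<\delta_i/2\leq \delta_1/2\leq n/2$. If $2\leq i\leq j\leq d$, then Lemma~\ref{p:hij} gives $h_{ij}=(\delta_i+\delta_j)/2\leq \delta_i\leq \delta_2<n/2$. If $i=1<j\leq d$, then $h_{1j}=(\delta_1+\delta_j)/2\leq (\delta_1+\delta_2)/2\leq n/2$. The only remaining position is $(1,1)$, where $h_{11}=\delta_1$, which is exactly the exception allowed in the statement. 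This exhausts all positions in the Young diagram.

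There is no serious obstacle here: once the identity $n=\sum_i\delta_i$ is established, the result reduces to the inequalities $\delta_1+\delta_2\leq n$ and $\delta_2<n/2$ together with Lemma~\ref{p:hij}. The one point requiring a little care is the use of the strict inequality $\delta_1>\delta_2$ to obtain $\delta_2<n/2$ (needed to control all hooks lying strictly below the top principal hook), and checking that the degenerate case $d=1$—where $\lambda$ is a single hook, $\delta_1=n$, and $(1,1)$ is genuinely the only position with hook length exceeding $n/2$—is consistent with the claim.
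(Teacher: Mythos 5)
Your proof is correct and complete: the principal-hook identity $n=\sum_i\delta_i$, the consequence $\delta_1+\delta_2\leq n$ and $\delta_2<n/2$, and the case analysis via Lemma~\ref{p:hij} (including the reduction to $i\leq j$ by self-conjugacy and the degenerate case $d=1$) cover every box of the diagram. Note that the paper states this lemma without proof, so there is no argument to compare against; yours is exactly the natural one suggested by the machinery of Lemma~\ref{p:hij}, and it would serve as a valid proof of the statement.
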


We define $SC(n)$ to be the set of self-conjugate partitions of $n$, $SC_t(n)$ to be the set of self-conjugate $t$-core partitions of $n$ and $sc(n)=|SC(n)|$ and $sc_t(n)=|SC_t(n)|$. Clearly $SC_t(n)\subseteq SC(n)$.  

The generating function for the number of $t$-core partitions is due to Olsson \cite[Proposition~3.3]{Olsson76}, while the generating function for the number of self-conjugate $t$-core partitions is due to Olsson \cite[Equation~(2.40)]{Olsson90} and Garvan, Kim, and Stanton \cite[Equation~(7.1)]{Garvan}: 
\begin{equation}
\sum_{n=0}^\infty c_t(n)q^n=\prod_{n=1}^\infty \frac{(1-q^{nt})^t}{1-q^n}
\end{equation}
\begin{equation}
\left.\sum_{n=0}^\infty sc_t(n)q^n=
\begin{cases}
\prod_{n=1}^\infty(1-q^{2tn})^{(t-1)/2}\cdot\frac{1+q^{2n-1}}{1+q^{t(2n-1)}} & \textup{if $t$ is odd}\\
\prod_{n=1}^\infty(1-q^{2tn})^{t/2}\cdot\big(1+q^{2n-1}\big) & \textup{if $t$ is even}\\
\end{cases}
\right\}.
\label{eq:scgf}
\end{equation}

The {\it $t$-core $\lambda^0$ of $\lambda$} is the partition obtained from $\lambda$ by repeatedly removing $t$-hooks until none remain; by Theorem 2.7.16 in \cite{JamesKerber}, $\lam^0$ is unique.  We introduce without definition the {\it t-quotient} of $\lambda,$ a sequence $(\lambda_{(0)},\cdots,\lambda_{(t-1)})$ of partitions which record the hooks of $\lambda$ which are divisible by $t$. We say that a $t$-quotient is {\em self-conjugate} when  $\lambda_{(k)}$ is the conjugate partition of $\lambda_{(t-1-k)}$ for all $0\leq k\leq t-1$.  The following results can be found in \cite{Olsson1} as Propositions 3.6 and 3.5.

\begin{prop} Given a partition $\lambda$ of $n$, its $t$-core $\lambda^0\vdash n_0$ and $t$-quotient\newline $(\lambda_{(0)},\cdots,\lambda_{(t-1)})$ satisfy $n=n_0 + t\sum_{k=0}^{t-1}|\lambda_{(k)}|$. Further, there are exactly $\sum_{k=0}^{t-1}|\lambda_{(k)}|$ hooks in $\lambda$ that are divisible by $t$.
\end{prop}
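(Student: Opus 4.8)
The plan is to pass to the language of \emph{beta-sets} and the $t$-abacus, which is the standard machinery behind the $t$-core / $t$-quotient decomposition. Fix $r$ a multiple of $t$ that is at least the number of parts of $\lambda$, and form the beta-set $B=\{\lambda_i+r-i:1\le i\le r\}$ (with $\lambda_i=0$ for $i$ past the last part), a set of $r$ distinct nonnegative integers. Two elementary facts drive the argument. First, the multiset of hook lengths occurring in the Young diagram of $\lambda$ is exactly $\{\,b-x : b\in B,\ x\notin B,\ 0\le x<b\,\}$, with each box of $\lambda$ corresponding to one such pair $(b,x)$ and its hook length equal to $b-x$ (read off the boundary lattice path of $\lambda$: the $b$'s mark the up-steps and the gaps $x$ mark the right-steps). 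Second, the number of boxes of any partition equals its number of hooks, since every box carries exactly one hook; hence $|\mu|$ counts the hooks of $\mu$. Now distribute $B$ across $t$ runners by residue modulo $t$: let runner $k$ carry $m_k$ beads, sitting at the position set $S_k=\{\,q\ge 0 : qt+k\in B\,\}$, so that $\sum_k m_k=r$. Then $S_k$ is a beta-set for $\lambda_{(k)}$, while the $t$-core $\lambda^0$ has beta-set $B^0$ obtained by replacing each $S_k$ by $\{0,1,\dots,m_k-1\}$, i.e.\ by pushing every bead to the top of its runner.

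For the first identity, $n=\sum_{b\in B}b-\binom{r}{2}$ because $\sum_i(\lambda_i+r-i)=n+\binom{r}{2}$, and the same formula with the same $r$ gives $n_0=\sum_{b\in B^0}b-\binom{r}{2}$. Splitting the two sums by runner, $\sum_{b\in B}b=\sum_k\big(k\,m_k+t\sum_{q\in S_k}q\big)$ and $\sum_{b\in B^0}b=\sum_k\big(k\,m_k+t\binom{m_k}{2}\big)$; the terms $k\,m_k$ cancel on subtraction, leaving
\[
n-n_0=t\sum_{k=0}^{t-1}\Big(\sum_{q\in S_k}q-\binom{m_k}{2}\Big)=t\sum_{k=0}^{t-1}|\lambda_{(k)}|,
\]
since $\sum_{q\in S_k}q-\binom{m_k}{2}$ is precisely the number of boxes of the partition with beta-set $S_k$, namely $|\lambda_{(k)}|$.

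For the second assertion I again invoke the first elementary fact: a hook of $\lambda$ has length divisible by $t$ exactly when its pair $(b,x)$ satisfies $b\equiv x\pmod t$. Grouping such pairs by their common residue $k$ and writing $b=q_b t+k$, $x=q_x t+k$, the conditions $b\in B$, $x\notin B$, $0\le x<b$ become $q_b\in S_k$, $q_x\notin S_k$, $0\le q_x<q_b$. By the first fact applied to $\lambda_{(k)}$ (whose beta-set is $S_k$), the number of such pairs is the number of hooks of $\lambda_{(k)}$, which by the second fact is $|\lambda_{(k)}|$. Summing over $k$ shows the number of hooks of $\lambda$ divisible by $t$ equals $\sum_{k=0}^{t-1}|\lambda_{(k)}|$.

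The only real work is in installing the dictionary so that the three identifications used above are literally correct: that $S_k$ really is a beta-set for $\lambda_{(k)}$ under a consistent normalization (taking $t\mid r$ gives each runner exactly $r/t$ slots and keeps the bookkeeping uniform), and that ``pushing beads to the top'' genuinely produces the $t$-core rather than merely some $t$-core. Helpfully, the argument is insensitive to the usual index ambiguity in the definition of the $t$-quotient (whether a runner encodes $\lambda_{(k)}$ or its conjugate, and the order of the runners), since a partition and its conjugate share both size and multiset of hook lengths. Once the dictionary is in hand --- which is exactly the $t$-quotient content the paper imports from \cite{Olsson1} --- both statements reduce to a single pair-counting identity run with or without the congruence restriction, so there is no obstacle beyond the set-up; alternatively one may simply cite \cite[Propositions 3.5--3.6]{Olsson1} in place of the computation.
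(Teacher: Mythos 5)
Your proof is correct. Note that the paper itself does not prove this proposition at all: it is imported verbatim, with the sentence ``The following results can be found in \cite{Olsson1} as Propositions 3.6 and 3.5,'' so there is no internal argument to compare against. What you have written is essentially the standard proof that sits behind that citation: encode $\lambda$ by a beta-set $B$ with $r$ beads ($t\mid r$), use the two classical facts that the hook lengths of $\lambda$ are the differences $b-x$ with $b\in B$, $x\notin B$, $0\le x<b$, and that $|\mu|=\sum_{q\in S}q-\binom{m}{2}$ for any beta-set $S$ of $\mu$ with $m$ beads; then the size identity follows by comparing $\sum_{b\in B}b$ with $\sum_{b\in B^0}b$ runner by runner (the $km_k$ terms cancel, leaving $t\sum_k|\lambda_{(k)}|$), and the hook count follows by restricting the pair-counting to pairs with $b\equiv x\pmod t$, which decomposes exactly into the corresponding pair-countings for the runners $S_k$. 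Your handling of the two genuinely delicate points is also right: that pushing beads to the tops of the runners yields the $t$-core (this is the uniqueness/dictionary content one would otherwise quote from \cite{Olsson1} or \cite{JamesKerber}), and that the indexing/conjugation ambiguity in the definition of the $t$-quotient is harmless here because both $|\lambda_{(k)}|$ and the hook-length multiset are invariant under conjugation. So your write-up supplies a self-contained argument where the paper defers to the literature, at the modest cost of having to set up the abacus formalism; either route is acceptable.
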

\begin{prop}\label{symquo} 
A partition $\lambda$ of $n$ is self-conjugate if and only if its $t$-core $\lambda^0$ and $t$-quotient $(\lambda_{(0)},\cdots,\lambda_{(t-1)})$ (with the appropriate normalization) are both self-conjugate.
\end{prop}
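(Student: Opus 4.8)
The plan is to read everything off the beta-set (abacus) encoding of a partition, under which conjugation on the one hand, and passage to the $t$-core and $t$-quotient on the other, both become transparent. First I would fix an integer $N$ that is a multiple of $t$ and large relative to $\lambda$ --- say $N>\max(\lambda_1,\lambda_1^*)$ --- and this choice of $N$ is exactly the ``appropriate normalization'' referred to in the statement. Encode $\lambda$ by its $N$-bead beta-set $X_\lambda=\{\lambda_i+N-i:1\le i\le N\}\subseteq\{0,1,\dots,2N-1\}$, pictured as a row of $2N$ sites carrying a bead at each position of $X_\lambda$ and a hole at each of the other $N$ positions. The basic fact I would invoke is that conjugation is the $180^\circ$ rotation of this row that simultaneously interchanges beads and holes: $X_{\lambda^*}=\{\,2N-1-p : 0\le p\le 2N-1,\ p\notin X_\lambda\,\}$, which again has $N$ beads.

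Next I would spread the $2N$ sites onto a $t$-runner abacus, putting site $p$ on runner $p\bmod t$. Because $2N\equiv 0\pmod t$, the conjugation rotation $p\mapsto 2N-1-p$ carries runner $k$ onto runner $t-1-k$, and on each runner it acts again as a $180^\circ$ bead/hole swap. Two standard readings of the abacus then handle the structural content. Reading runner $j$ in isolation (as an abacus with $N/t$ sites) produces the quotient component $\lambda_{(j)}$ --- and it is precisely $t\mid N$ that makes runner $j$ correspond to the $j$-th quotient component rather than to a cyclic relabelling --- so comparing with the previous paragraph yields $(\lambda^*)_{(k)}=(\lambda_{(t-1-k)})^*$ for all $k$. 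Separately, sliding all beads toward $0$ along each runner yields the beta-set of the $t$-core, and since transposing a Young diagram sends $t$-hooks to $t$-hooks and commutes with hook removal, conjugation commutes with extraction of the $t$-core: $(\lambda^*)^0=(\lambda^0)^*$.

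Finally I would assemble the equivalence. If $\lambda=\lambda^*$ then $\lambda^0=(\lambda^*)^0=(\lambda^0)^*$ and $\lambda_{(k)}=(\lambda^*)_{(k)}=(\lambda_{(t-1-k)})^*$ for every $k$, so the $t$-core is self-conjugate and the $t$-quotient is self-conjugate in the sense defined above. Conversely, if $\lambda^0$ is self-conjugate and $\lambda_{(k)}=(\lambda_{(t-1-k)})^*$ for every $k$, then $(\lambda^*)^0=(\lambda^0)^*=\lambda^0$ and $(\lambda^*)_{(k)}=(\lambda_{(t-1-k)})^*=\lambda_{(k)}$, so $\lambda^*$ and $\lambda$ share a $t$-core and a $t$-quotient; since the correspondence $\lambda\longleftrightarrow\bigl(\lambda^0,(\lambda_{(0)},\dots,\lambda_{(t-1)})\bigr)$ is a bijection, $\lambda^*=\lambda$. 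The only real friction I anticipate is the normalization bookkeeping: one must pin down $N$ (a multiple of $t$, exceeding $\max(\lambda_1,\lambda_1^*)$) so that conjugation is literally $p\mapsto 2N-1-p$ with no off-by-one shift, check that this rotation matches runner $k$ with runner $t-1-k$ exactly, and confirm compatibility with the bead-sliding that reads the core and the runner-labelling that reads the quotient --- this is the content hidden in ``with the appropriate normalization,'' after which the fixed-point argument is immediate.
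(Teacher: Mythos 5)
Your proof is correct, and it matches the standard argument: the paper itself states this proposition without proof, citing Olsson's Propositions 3.5--3.6, where the result is established by exactly the $\beta$-set/abacus reasoning you use (conjugation as the bead--hole swapping $180^\circ$ rotation, which pairs runner $k$ with runner $t-1-k$ precisely because $t\mid N$, combined with the bijection $\lambda \leftrightarrow (\lambda^0,\ t\text{-quotient})$ to conclude the fixed-point argument). The only cosmetic slip is that in your $2N$-site window each runner carries $2N/t$ sites rather than $N/t$; this has no effect on the argument.
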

For the interested reader, the series of examples starting with 2.7.14 and ending with 2.7.28 in \cite{JamesKerber} provide details on how calculate the $t$-core and $t$-quotient of a partition (by way of its abacus diagram).  To show the symmetry inherent in the $t$-core and $t$-quotient of a self-conjugate partition, Figure~\ref{fig:5core} shows the $5$-core and $5$-quotient of the partition defined by diagonal hooks $\boldsymbol\delta=(29,15)$.

\begin{figure}[h]
\begin{minipage}[c]{1in}
\epsfig{figure=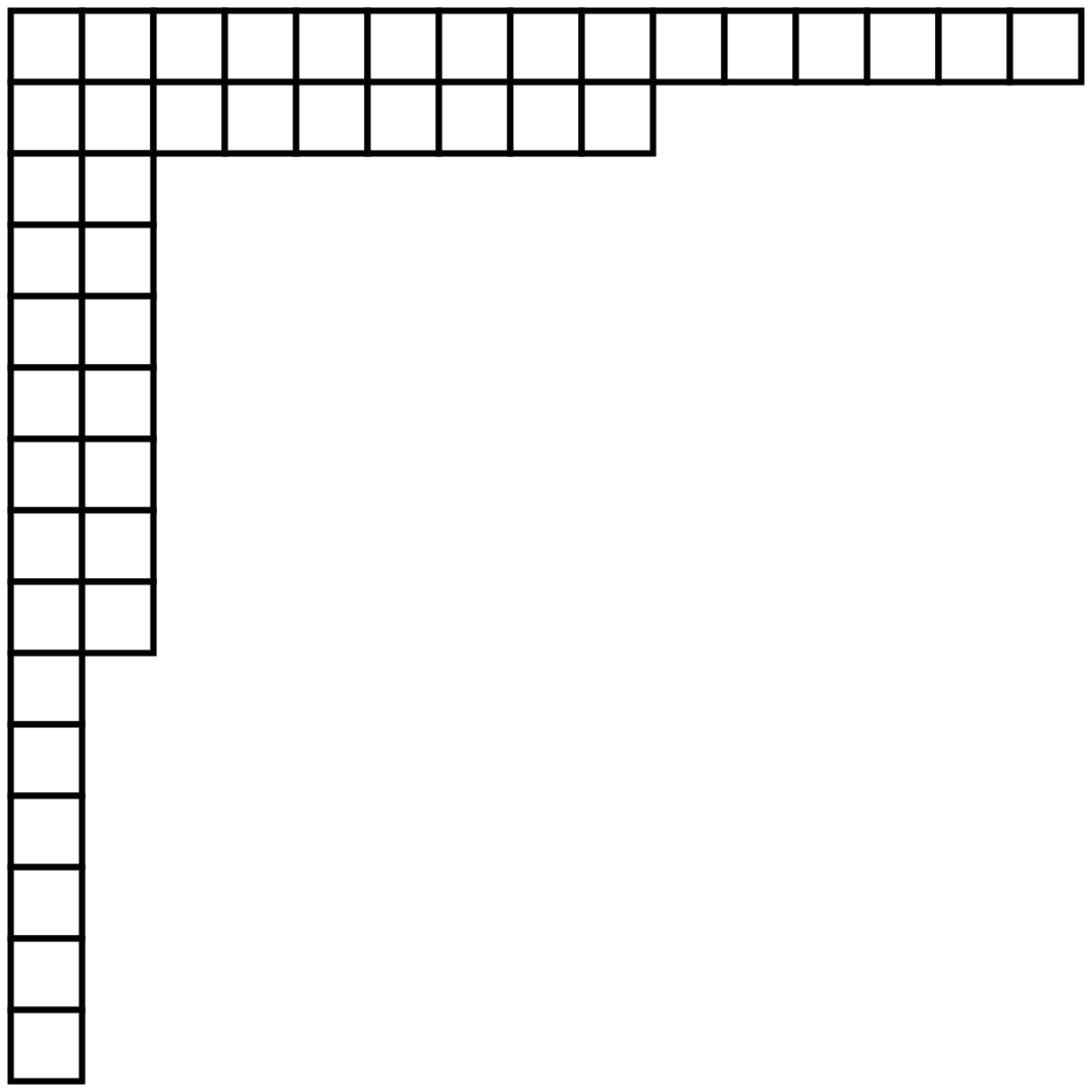,height=1in}
  \end{minipage}
 $\mapsto$\qquad
 \begin{minipage}[c]{0.75in}
 \epsfig{figure=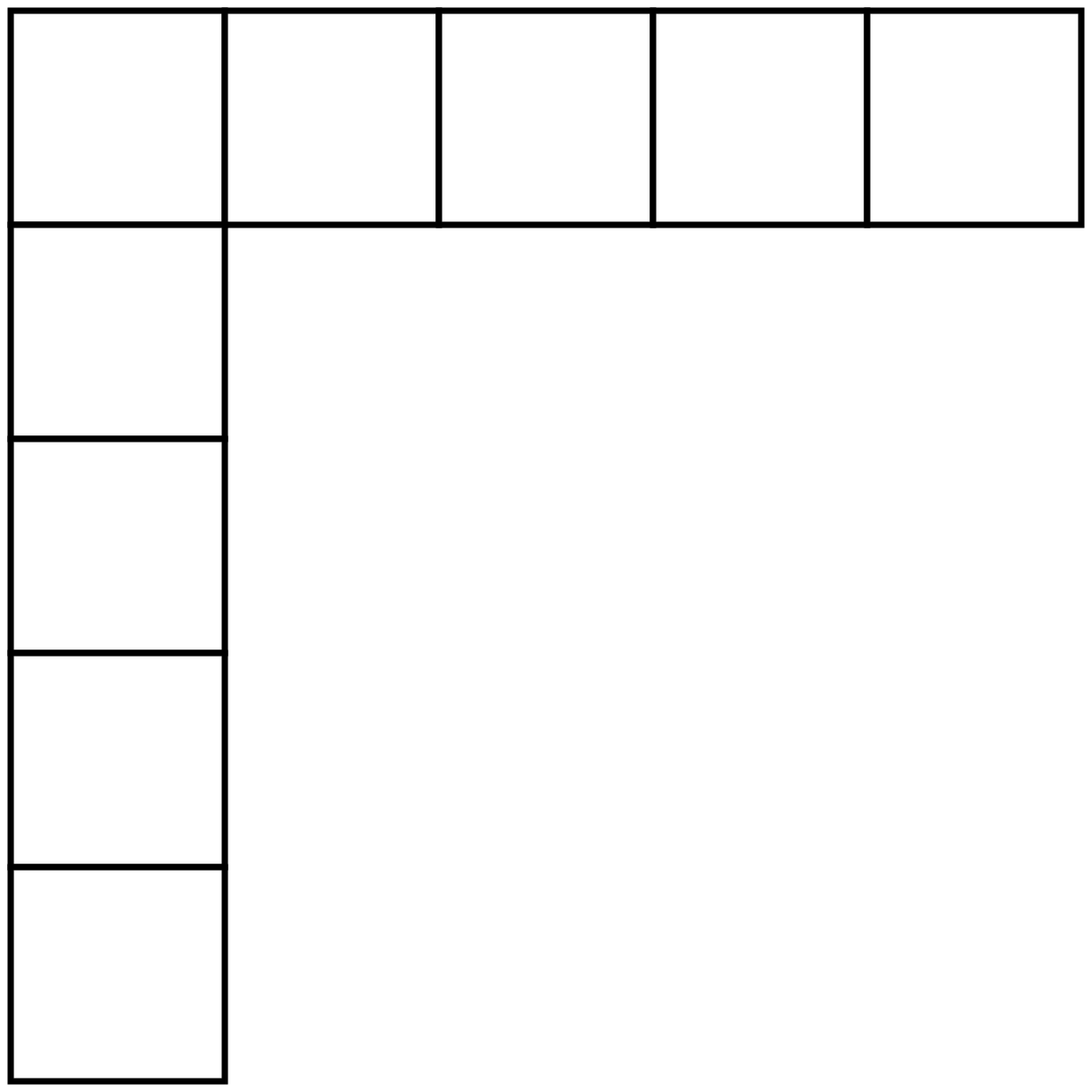,height=0.75in} 
 \end{minipage}
\quad
 and
\quad
 $\left(~
 \begin{minipage}[c]{0.2in}
 \epsfig{figure=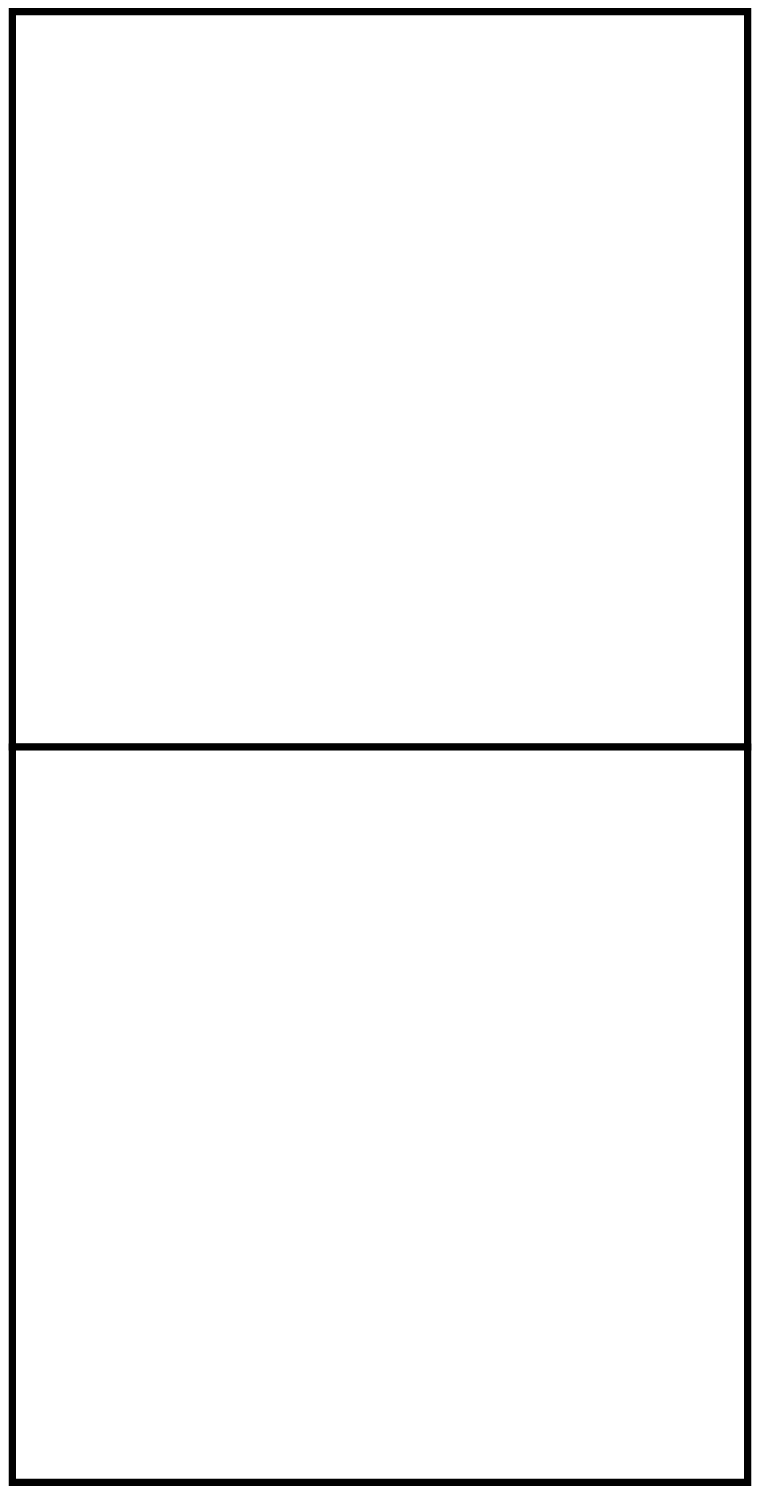,height=0.30in} 
 \end{minipage},\varnothing,
  \begin{minipage}[c]{0.35in}
 \epsfig{figure=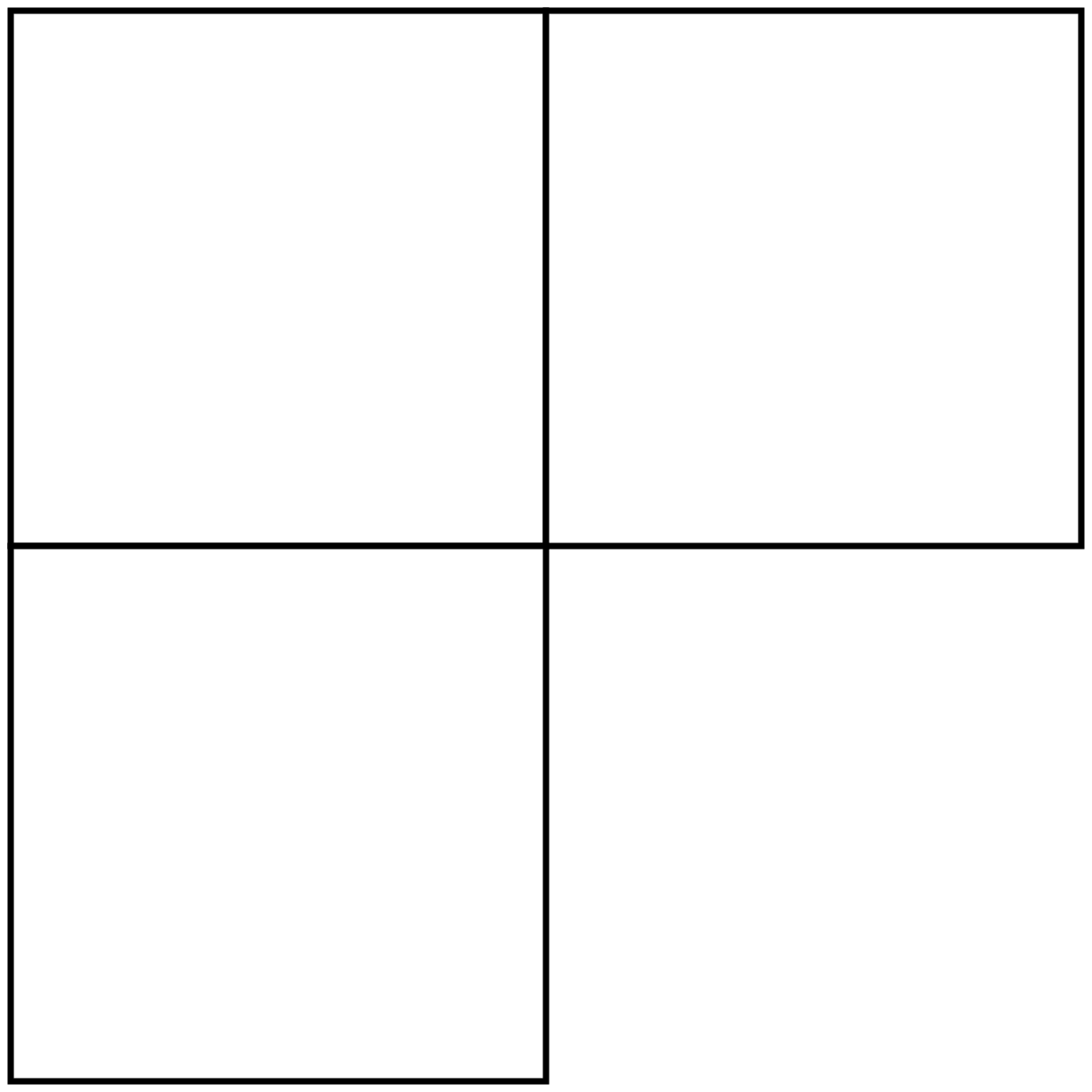,height=0.30in} 
 \end{minipage},
 \varnothing,
 \begin{minipage}[t]{0.3in}
\raisebox{0.04in}{\epsfig{figure=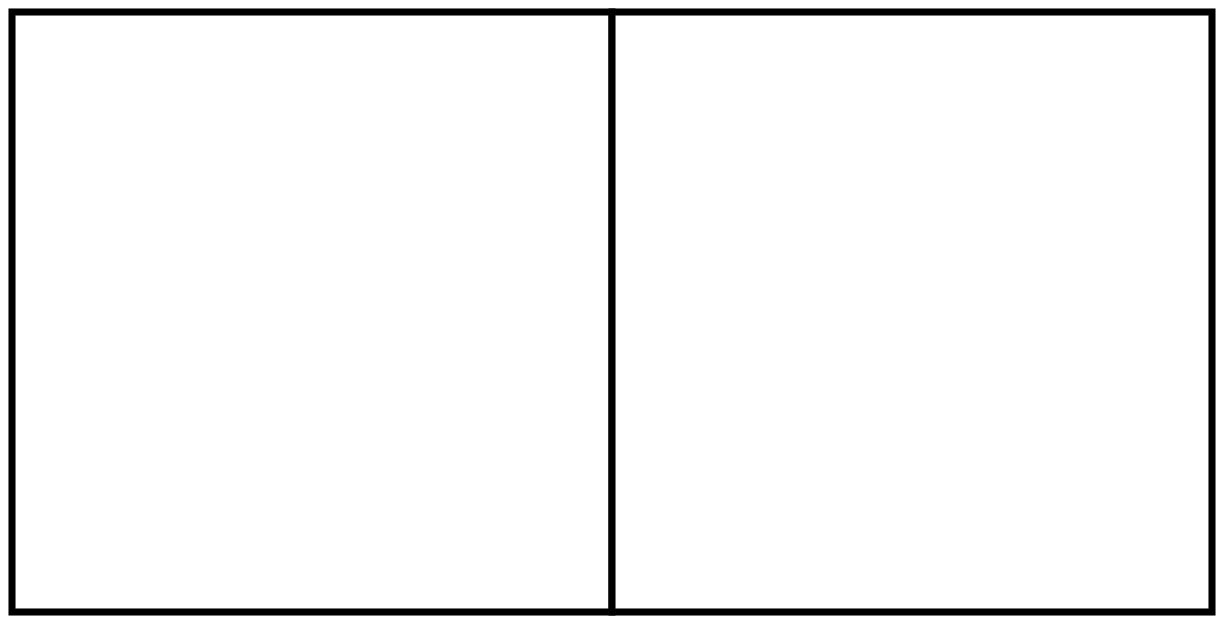,height=0.15in}}
 \end{minipage}~
 \right)$ 

\caption{The $5$-core $(5,1,1,1,1)$ and $5$-quotient $\big((1,1),\varnothing,(2,1),\varnothing,(2)\big)$ of the partition with diagonal hooks $\boldsymbol\delta=(29,15)$.}
\label{fig:5core}
\end{figure}

\subsection{Counting self-conjugate $t$-cores}\

The following result describes the possible ways to remove a minimal amount of $t$-hooks from a self-conjugate partition to obtain a self-conjugate partition.  This is discussed further in Section 4 of \cite{Nath}.

\begin{lem}\label{2tremove} 
Let $\lambda$ be a self-conjugate partition of $n$ that is not a $t$-core. 
\begin{enumerate}
\item When $t$ is even, there there exists a pair of off-diagonal $t$-hooks such that upon their removal, the resultant partition is a self-conjugate partition of $n-2t$.
\item When $t$ is odd, then one of the following must exist: a pair of off-diagonal $t$-hooks as in \textup{(1)} or a diagonal $t$-hook such that upon its removal, the resultant partition is a self-conjugate partition of $n-t$.
\end{enumerate}
\end{lem}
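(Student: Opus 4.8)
The plan is to work entirely on the level of the $t$-quotient and the self-conjugate symmetry it carries (Proposition~\ref{symquo}), since removing a single $t$-hook from $\lambda$ corresponds to removing a single box from one of the quotient partitions $\lambda_{(k)}$, and we want to track how this interacts with the conjugation symmetry $\lambda_{(k)} \leftrightarrow \lambda_{(t-1-k)}^*$. First I would note that since $\lambda$ is not a $t$-core, Proposition~3.6 of \cite{Olsson1} guarantees $\sum_k |\lambda_{(k)}| \geq 1$, so at least one $\lambda_{(k)}$ is nonempty; by the self-conjugacy of the quotient, if $\lambda_{(k)} \neq \eset$ then $\lambda_{(t-1-k)} \neq \eset$ as well.

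Next I would split on parity of $t$ exactly as in the statement. When $t$ is even, the index set $\{0,\dots,t-1\}$ pairs off into $t/2$ disjoint transposed pairs $\{k, t-1-k\}$ with no fixed point; pick an index $k$ with $\lambda_{(k)} \neq \eset$, remove a removable corner box from $\lambda_{(k)}$, and simultaneously remove the conjugate corner box from $\lambda_{(t-1-k)}$. Since $k \neq t-1-k$ these two removals are independent, they preserve the self-conjugacy of the quotient, and they fix the $t$-core $\lambda^0$, so the resulting partition is self-conjugate of size $n - 2t$. One should check that such a simultaneous removal corresponds to removing two genuine $t$-hooks from $\lambda$ itself and that these hooks are off-diagonal; the off-diagonal claim follows because diagonal $t$-hooks of $\lambda$ correspond (under the standard abacus/quotient bookkeeping) to the "middle'' behaviour only possible when $t$ is odd, so when $t$ is even every $t$-hook removal lands in a paired-off quotient slot. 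When $t$ is odd there is a unique fixed index $m = (t-1)/2$, and $\lambda_{(m)}$ must be self-conjugate on its own. If some $\lambda_{(k)}$ with $k \neq m$ is nonempty, we are in the even-type situation and remove an off-diagonal pair as before. Otherwise the only nonempty quotient part is $\lambda_{(m)}$, which is a nonempty self-conjugate partition; removing its diagonal hook of the appropriate (smallest) size keeps it self-conjugate and corresponds to removing a single diagonal $t$-hook from $\lambda$, yielding a self-conjugate partition of $n - t$.

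The main obstacle I expect is not the combinatorial bookkeeping on the quotient, which is routine, but rather making precise and justifying the dictionary between (a) corner-box removals in the quotient partitions together with their conjugation-symmetry, and (b) removals of honest $t$-hooks in $\lambda$ that are \emph{on} versus \emph{off} the main diagonal — in particular verifying that a symmetric pair of corner removals in slots $k \neq t-1-k$ always realizes as an \emph{off-diagonal} pair of $t$-hooks, and that a diagonal-hook removal in the middle slot realizes as a genuine \emph{diagonal} $t$-hook of $\lambda$. This is exactly the content worked out in Section~4 of \cite{Nath}, so I would either cite that directly or reproduce the short abacus argument: place $\lambda$ on a $t$-runner abacus with a symmetric bead configuration (reflecting self-conjugacy), observe that runner $k$ and runner $t-1-k$ are exchanged-and-flipped by the symmetry while runner $m$ is self-symmetric, and read off that sliding a bead down one position on a paired runner (and its mirror on the partner runner) is an off-diagonal $2t$-box move, whereas sliding a bead on the central runner is the diagonal $t$-box move.
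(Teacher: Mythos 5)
Your overall plan (work on the symmetric abacus/quotient via Proposition~\ref{symquo} and defer the dictionary to Section~4 of \cite{Nath}) is consistent with the paper, which states the lemma without proof and cites that source. However, there is a genuine gap: the dictionary you rely on, ``paired runners $\leftrightarrow$ off-diagonal hooks, central runner $\leftrightarrow$ diagonal hooks,'' is only true in one direction. A diagonal $t$-hook does correspond to a central-runner move (and can only occur for odd $t$, since diagonal hook lengths of a self-conjugate partition are odd), but a central-runner move need not be a diagonal hook. Concretely, take $t=3$ and $\lam=(5,1,1,1,1)$: it is self-conjugate with empty $3$-core and $3$-quotient $(\eset,(2,1),\eset)$ concentrated in the middle slot, yet its unique diagonal hook has length $9$ and all of its $3$-hooks sit at the off-diagonal positions $(1,3)$ and $(3,1)$. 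So in your ``otherwise'' branch (only $\lam_{(m)}$ nonempty) the object the lemma requires may in fact be an off-diagonal pair, which your dichotomy declares unavailable. Moreover, your prescription in that branch --- remove the smallest diagonal hook of $\lam_{(m)}$ --- deletes $\delta$ boxes from the quotient, hence $\delta$ many $t$-hooks and $\delta t$ boxes of $\lam$, not a single $t$-hook, so it cannot yield the asserted partition of $n-t$; and the correct quotient-side analogue of removing one $t$-hook, namely deleting one corner box of $\lam_{(m)}$, need not leave $\lam_{(m)}$ self-conjugate (again $\lam_{(m)}=(2,1)$), so a diagonal $t$-hook cannot be produced this way either.

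The repair is to run the case distinction on the hooks of $\lam$ itself rather than on which quotient slots are occupied. If $\lam$ has a $t$-hook at an off-diagonal position $(i,j)$, $i\neq j$, then by self-conjugacy it has the mirror hook at $(j,i)$, and the symmetric-abacus computation (this is the step genuinely supplied by \cite{Nath} or a short bead argument: the two mirror bead moves commute, fix the core, and preserve the symmetric configuration) shows that removing the pair leaves a self-conjugate partition of $n-2t$. For even $t$ this case always occurs, because diagonal hook lengths are odd; that, rather than the slot pairing, is the correct justification of off-diagonality in part (1). If instead every $t$-hook of $\lam$ is diagonal (forcing $t$ odd and some $\delta_i=t$), removing that diagonal hook directly gives the self-conjugate partition with diagonal hooks $\{\delta_1,\dots,\delta_d\}\setm\{\delta_i\}$, of size $n-t$. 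If you prefer to stay on the quotient side, the central slot must itself be split into two subcases: a removable corner of $\lam_{(m)}$ lying on its own diagonal corresponds to a diagonal $t$-hook of $\lam$ (size drops by $t$), while an off-diagonal corner must be removed together with its mirror corner of $\lam_{(m)}$, which realizes an off-diagonal pair of $t$-hooks of $\lam$ (size drops by $2t$); your single-box bookkeeping conflates these two situations.
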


The following result is key in proving our main results.
\begin{thm}
Let $n$ and $t$ be positive integers.  Then  
\begin{equation}
sc_{2t}(n)=sc(n)-\sum_{1\leq i\leq \lfloor \frac{n}{4t}\rfloor} sc_{2t}(n-4it)\,\hat{p}_{t}(i)
\label{eq:EvenSum}
\end{equation}
and
\begin{equation}
sc_{2t+1}(n)=sc(n)-\hspace{-.1in}\sum_{\substack{i,j\geq 0\\ 1 \leq 2i+j\leq \lfloor \frac{n}{2t+1} \rfloor}} \hspace{-.1in}sc_{2t+1}\big(n-(2i+j)(2t+1)\big)\,\hat{p}_{t}(i)\,sc(j),
\label{eq:OddSum}
\end{equation}
where $\hat{p}_t(n)$ is the number of sequences of length $t$ of (possibly empty) partitions $\lambda_{(k)}$ such that $\sum_{k}|\lambda_{(k)}|=n$. 
\end{thm}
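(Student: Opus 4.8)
The plan is to count $sc(n)$ by classifying each self-conjugate partition of $n$ according to its $t$-core, using the standard core--quotient bijection together with Proposition~\ref{symquo}. Recall that for any fixed $s\geq 1$ the map sending $\lambda$ to the pair $\big(\lambda^0,(\lambda_{(0)},\ldots,\lambda_{(s-1)})\big)$ consisting of its $s$-core and $s$-quotient is a bijection between partitions and pairs (an $s$-core partition, an arbitrary $s$-tuple of partitions), and under it $|\lambda|=|\lambda^0|+s\sum_{k=0}^{s-1}|\lambda_{(k)}|$ (see \cite{JamesKerber,Olsson1}). By Proposition~\ref{symquo}, $\lambda$ is self-conjugate if and only if $\lambda^0$ is self-conjugate and, under the appropriate normalization, the quotient is self-conjugate in the sense that $\lambda_{(k)}=\lambda_{(s-1-k)}^*$ for all $0\leq k\leq s-1$.

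Next I would determine which $s$-tuples arise as self-conjugate $s$-quotients. The involution $k\mapsto s-1-k$ on $\{0,\ldots,s-1\}$ pairs the components, and a self-conjugate quotient is determined by choosing one partition from each orbit, the partner being its conjugate. For $s=2t$ the involution has no fixed point, so a self-conjugate $2t$-quotient is specified by its first half $(\lambda_{(0)},\ldots,\lambda_{(t-1)})$, which may be any $t$-tuple of partitions; writing $i=\sum_{k=0}^{t-1}|\lambda_{(k)}|$, the total quotient size is $2i$, and the number of such quotients is exactly $\hat{p}_{t}(i)$. For $s=2t+1$ the involution fixes $k=t$, which forces $\lambda_{(t)}=\lambda_{(t)}^*$ to be self-conjugate, while the remaining components are determined by the first half $(\lambda_{(0)},\ldots,\lambda_{(t-1)})$; writing $i=\sum_{k=0}^{t-1}|\lambda_{(k)}|$ and $j=|\lambda_{(t)}|$, the total quotient size is $2i+j$ and the number of choices is $\hat{p}_{t}(i)\,sc(j)$.

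Finally I would sum over all self-conjugate partitions of $n$, grouped by the size of their $t$-core. In the even case, the self-conjugate partitions of $n$ whose $2t$-core has size $n-4ti$ are in bijection (via core--quotient) with pairs consisting of a self-conjugate $2t$-core of $n-4ti$ and a self-conjugate $2t$-quotient of total size $2i$, hence are counted by $sc_{2t}(n-4ti)\,\hat{p}_{t}(i)$; nonnegativity of the core size restricts $i$ to $0\leq i\leq\lfloor n/(4t)\rfloor$, so $sc(n)=\sum_{0\leq i\leq \lfloor n/(4t)\rfloor}sc_{2t}(n-4ti)\,\hat{p}_{t}(i)$. Isolating the $i=0$ term, which equals $sc_{2t}(n)$ since $\hat{p}_{t}(0)=1$ and a self-conjugate $2t$-core is its own $2t$-core, yields \eqref{eq:EvenSum}. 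In the odd case, the same argument gives $sc(n)=\sum_{\substack{i,j\geq 0,\ 2i+j\leq \lfloor n/(2t+1)\rfloor}}sc_{2t+1}\big(n-(2i+j)(2t+1)\big)\,\hat{p}_{t}(i)\,sc(j)$, and isolating the $i=j=0$ term, which equals $sc_{2t+1}(n)$ since $\hat{p}_{t}(0)=sc(0)=1$, yields \eqref{eq:OddSum}.

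I expect the only delicate point to be matching conventions: one must check that the normalization implicit in Proposition~\ref{symquo} really makes the self-conjugacy of the quotient the componentwise condition $\lambda_{(k)}=\lambda_{(s-1-k)}^*$, so that for $s=2t+1$ the central component $\lambda_{(t)}$ is genuinely forced to be self-conjugate (rather than, say, conjugate to a reindexed neighbour). Once that normalization is pinned down, the rest is bookkeeping with the core--quotient bijection and the definition of $\hat{p}_{t}$.
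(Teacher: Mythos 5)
Your proposal is correct and takes essentially the same approach as the paper: decompose self-conjugate partitions via the core--quotient bijection together with Proposition~\ref{symquo}, count self-conjugate quotients by their free half (plus the self-conjugate middle component in the odd case, giving the factors $\hat{p}_t(i)$ and $\hat{p}_t(i)\,sc(j)$), and isolate the trivial-quotient term. The only cosmetic difference is that the paper counts the non-core partitions $\bar{SC}_{2t}(n)$ directly, citing Lemma~\ref{2tremove} for the parity of the number of removed hooks, whereas you deduce that parity from the fixed-point structure of the involution $k\mapsto s-1-k$ on the quotient components.
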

\begin{proof} 
Consider the set $\bar{SC}_{2t}(n)$ of self-conjugate partitions of $n$ that are not $2t$-cores and let $\bar{sc}_{2t}=|\bar{SC}_{2t}(n)|$, whereby $sc(n)=sc_{2t}(n)+\bar{sc}_{2t}(n)$.  By Lemma~\ref{2tremove}, the $2t$-core of any non-$2t$-core must be obtained by the removing an even number of $2t$-hooks. Furthermore, by Proposition~\ref{symquo}, its $2t$-core and (non-empty) $2t$-quotient are both self-conjugate. When one removes $2i$ $2t$-hooks, the $2t$-core is a partition of $n-(2i)(2t)$ and there are $\hat{p}_{t}(i)$ possible $2t$-quotients. Summing over valid values of $i$ gives Equation~\eqref{eq:EvenSum}.

\medskip
Consider the set $\bar{SC}_{2t+1}(n)$ of self-conjugate partitions of $n$ that are not $(2t+1)$-cores and let $\bar{sc}_{2t+1}(n)=|\bar{SC}_{2t+1}(n)|$. The argument proceeds similarly as above, with the additional condition that the core of a non-$(2t+1)$-core can be obtained by removing $2i$ off-diagonal $(2t+1)$-hooks and/or $j$ diagonal $(2t+1)$-hooks, in which case the $(2t+1)$-quotient has a non-empty partition $\lam_{(t+1)}$ of $j$ that is itself self-conjugate. (Note that this means $j$ will never be $2$.)  There are a total of $\hat{p}_{t}(i)\,sc(j)$ possible $(2t+1)$-quotients which remove a total of $(2i+j)$ $(2t+1)$-hooks, and their $(2t+1)$-cores are partitions of $n-(2i+j)(2t+1)$.  Summing over valid values of $i$ and $j$ gives Equation~\eqref{eq:OddSum}.
\end{proof}

\subsection{Bounding the growth of $sc(n)$}\

We establish bounds on $\frac{sc(n-2)}{sc(n)}$ and $\frac{sc(n-4)}{sc(n)}$, which will be used in the next section to prove Theorems~\ref{thm:evenPC} and \ref{thm:oddPC}.  The technique used here is an adaptation of Section 3 in \cite{Craven}. 

\begin{lem}
Let $n$ be an integer greater than or equal to $19$.  Then $\frac{sc(n-2)}{sc(n)}< \frac{n}{n+2}$.
\label{lem:scn-2}
\end{lem}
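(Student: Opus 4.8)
The plan is to follow Craven's strategy: reduce the ratio bound to one combinatorial inequality, then establish that inequality by an explicit (weighted) injection.

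First, a self-conjugate partition is determined by its diagonal hook lengths $\delta_1>\cdots>\delta_d$, a strictly decreasing sequence of \emph{odd} positive integers summing to $n$; so $sc(n)$ counts partitions of $n$ into distinct odd parts. Adding $2$ to the largest diagonal hook, $(\delta_1,\dots,\delta_d)\mapsto(\delta_1+2,\delta_2,\dots,\delta_d)$, is an injection $SC(n-2)\hookrightarrow SC(n)$ (invertible by subtracting $2$ from the largest part, as $n-2\ge 17>0$) whose image is exactly $\{\mu\in SC(n):\delta_1(\mu)-\delta_2(\mu)\ge 4\text{ or }d(\mu)\le 1\}$. Hence
\[
sc(n)-sc(n-2)\;=\;E(n)\;:=\;\#\bigl\{\mu\in SC(n): d(\mu)\ge 2,\ \delta_1(\mu)-\delta_2(\mu)=2\bigr\}.
\]
Since $(n+2)\,sc(n-2)<n\cdot sc(n)$ is equivalent to $2\,sc(n-2)<n\cdot E(n)$, and $n\cdot E(n)=\sum_{\mu}\lvert\mu\rvert$ over the $E(n)$-set, it suffices to produce an injection, not onto,
\[
\{1,2\}\times SC(n-2)\;\hookrightarrow\;\bigl\{(\mu,B):\mu\in SC(n),\ d(\mu)\ge 2,\ \delta_1(\mu)-\delta_2(\mu)=2,\ B\text{ a box of }\mu\bigr\}.
\]

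To construct this map I would start from $\lambda\in SC(n-2)$ with diagonal hooks $\delta_1>\cdots>\delta_d$, build a self-conjugate partition $\mu$ of $n$ whose two largest diagonal hooks are consecutive odd integers, and encode in the box position $B$ (together with the label in $\{1,2\}$) precisely the information lost. The cleanest move replaces $\delta_1,\delta_2$ by the pair $\bigl(k+2,k\bigr)$ with $2k=\delta_1+\delta_2$, keeping $\delta_3,\dots,\delta_d$: this has total size $n$ and top gap $2$, and $k$ is odd exactly when $\delta_1\equiv\delta_2\pmod 4$. In the complementary residue class one instead keeps the largest hook near $\delta_1$, shifts the top down to gap $2$, and stores the (necessarily even) displacement in the coordinate of $B$, applying a parity correction to keep all hook lengths odd and distinct; injectivity holds because from $(\mu,B)$ one recovers $\delta_1+\delta_2$ from the top of $\mu$ and then $\delta_1$ from the recorded displacement, whence all of $\lambda$. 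The displacement ranges over roughly $n/2$ values, which is what yields the factor $n$.

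The main obstacle is the bookkeeping in that last step: because $\delta_1$ is odd while $\delta_1-\delta_2$ is even, any repackaging of $\lambda$ into a top gap of $2$ carries a factor-of-two (residue-mod-$4$) obstruction, so the construction must split into a few cases with slightly different prescriptions whose images one must check are disjoint and jointly smaller than $n\cdot E(n)$. A second unavoidable point is that the inequality genuinely fails for some smaller $n$ — e.g.\ $sc(16)=sc(18)=5$ forces $E(18)=0$ — and holds only barely at $n=19$, so the argument has to be arranged so that the slack it provides becomes positive exactly from $n=19$ on, either by carrying enough margin through the injection or by verifying a bounded initial range directly from the table in Appendix~\ref{sec:appendix}.
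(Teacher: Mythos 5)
Your reduction is correct and is in essence the paper's own first step: the map adding $2$ to $\delta_1$ (equivalently, one box to the first row and first column) identifies $SC(n-2)$ with the set of $\mu\in SC(n)$ having $\delta_1-\delta_2\ge 4$ or a single diagonal hook, so $sc(n)-sc(n-2)=E(n)$, and the desired bound is equivalent to $2\,sc(n-2)<n\,E(n)$ — which is exactly the paper's claim that the gap-$2$ class $B_n$ (plus the square partition) has size exceeding $sc(n-2)/(n/2)$. But the entire difficulty of the lemma lives in the step you have left as ``bookkeeping,'' and that step is not actually carried out. In the clean residue class you specify $\mu$ (replace $\delta_1,\delta_2$ by $(k+2,k)$, $2k=\delta_1+\delta_2$) but never say which box $B$ and which label in $\{1,2\}$ are attached, even though the map $\lambda\mapsto\mu$ forgets the split of $\delta_1+\delta_2$ and so the box must carry that information; in the complementary class the prescription is only ``a parity correction'' and ``store the displacement in the coordinate of $B$''; the case $d(\lambda)=1$ (which occurs for odd $n$ and is handled separately in the paper) is not covered by your replacement at all; and you do not verify that the encodings coming from the two residue classes land on disjoint sets of pairs $(\mu,B)$, nor exhibit a pair outside the image to get the strict inequality. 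These are precisely the points the paper's proof spends its effort on: it constructs an explicit surjection $g:SC(n-2)\twoheadrightarrow B_n$ with special cases for one diagonal hook, for two diagonal hooks, and for merged sequences of the staircase form $(2m+1,2m-1,\hdots,2k+1)$, and then bounds every fiber by $n/2$ by identifying the worst image $\beta^*$ in each congruence class of $n$ modulo $4$.

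Two further points you should make explicit if you complete the argument. First, any construction of this kind only works for $n$ large enough (the paper's works for $n\ge 27$), so the range $19\le n\le 26$ must be checked directly against the table, as you anticipate; note your ``barely at $n=19$'' reading is slightly off, since $2\,sc(17)=10$ against $19\,E(19)=19$ leaves real room, while the genuine failure is at $n=18$ where $E(18)=0$. Second, when you count the target as $n\,E(n)$ you are implicitly weighting each gap-$2$ partition by all $n$ of its boxes, so your injection must be allowed to use box positions that vary with $\mu$; pinning down a concrete, $\mu$-independent encoding of ``displacement plus label'' into box coordinates, valid for every shape arising as an image, is the same fiber-size analysis as the paper's and cannot be waved away. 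As written, the proposal is a sound plan that mirrors the paper's decomposition, but the lemma's actual proof — the explicit map with controlled multiplicity — is missing.
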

\begin{proof}
For a given $n\geq 27$, define two sets of self-conjugate partitions:
\begin{itemize}
\item[$A_n$:] The set of self-conjugate partitions of $n$ whose diagonal hooks satisfy $\delta_1-\delta_2\geq 4$.  If $n$ is odd, also include $\boldsymbol\delta=(n)$.
\item[$B_n$:] The set of self-conjugate partitions of $n$ whose diagonal hooks satisfy $\delta_1=\delta_2+2$ and whose parts are not all the same (when $n$ is a square number).
\item[$C_n$:] The set of self-conjugate partitions of $n$ in neither $A_n$ nor $B_n$.
\end{itemize}

There is a bijection $f:SC(n-2) \rightarrow A_n$  which takes a self-conjugate partition of $n-2$ and adds one box to the first row and to the first column.  We conclude that $|A_n|=sc(n-2)$.

When $B_n$ is nonempty, there is also an surjection $g:SC(n-2) \twoheadrightarrow B_n$.  ($B_n$ is nonempty for all values of $n\geq 19$.)  For $\lam\in SC(n-2)$, define $g(\lam)\in B_n$ by the following steps.  First, if $\lam$ has one diagonal hook, define $g(\lam)$ to have diagonal hooks $(\frac{n+1}{2},\frac{n-3}{2},1)$ if $n\equiv 1\mod 4$ or $(\frac{n-1}{2},\frac{n-5}{2},3)$ if $n\equiv 3\mod 4$. Otherwise, suppose that the diagonal hooks of $\lam$ are $\boldsymbol\delta=(\delta_1,\hdots,\delta_d)$; create a self-conjugate partition $\lam'$ with diagonal hooks $\boldsymbol\delta'=(\delta_1',\hdots,\delta_d')$, where
\begin{equation*}
\left.
\begin{cases}
\delta_1'=\frac{\delta_1+\delta_2}{2}+2 \textup{ and } \delta_2'=\frac{\delta_1+\delta_2}{2}-2 & \textup{if $\frac{\delta_1+\delta_2}{2}$ is odd} \\
\delta_1'=\frac{\delta_1+\delta_2}{2}+1 \textup{ and } \delta_2'=\frac{\delta_1+\delta_2}{2}-1 & \textup{if $\frac{\delta_1+\delta_2}{2}$ is even}
\end{cases}
\right\},
\end{equation*}
and which keeps all other diagonal hooks the same ($\delta_i'=\delta_i$ for all $3\leq i\leq d$).  Next, determine (if it exists) the first $i$ such that $\delta_i'\geq \delta_{i+1}'+4$.  Define $g(\lam)$ to be the partition which adds one box to the $(i+1)$-st row and to the $(i+1)$-st column of $\lam'$.  If no such $i$ exists, then $\boldsymbol\delta'$ is of the form $(2m+1,2m-1,\hdots,2k+3,2k+1)$ for $m> k>0$.  If $\lam$ has two diagonal hooks, then define $g(\lam)$ to have diagonal hooks $(\frac{n-4}{2},\frac{n-8}{2},5,1)$.  
Otherwise, $\lam'$ has three or more diagonal hooks and $\delta_d'>1$; define $g(\lam)$ to have diagonal hooks $(\delta_1'+2,\delta_2'+2,\delta_3',\hdots,\delta_d'-2)$. 

The function $g$ is well defined because the image of every self-conjugate partition satisfies $\delta_1=\delta_2+2$  and is a surjection because for the function $h:B_n\rightarrow sc(n-2)$ that removes the last box in the last row and the last box in the last column, then for any $\beta\in B_n$, it is true that $g(h(\beta))=\beta$.  

For $\beta\in B_n$, define the set $\Lambda_\beta\subset SC(n-2)$ to be the preimages of $\beta\in B_n$ under $g$.  The largest that this set can be is for the following $\beta^*\in B_n$, with diagonal hooks 
\[\left.\boldsymbol\delta(\beta^*)=
\begin{cases}
\big((n+2)/2,(n-2)/2\big) & n\equiv 0\mod 4 \\
\big((n+1)/2,(n-3)/2,1\big) & n\equiv 1\mod 4 \\
\big((n-4)/2,(n-8)/2,5,1\big) & n\equiv 2\mod 4 \\
\big((n-1)/2,(n-5)/2,3\big) & n\equiv 3\mod 4 \\
\end{cases}
\right\}.\]
In each of these cases, $|\Lambda_{\beta^*}|<n/2$.

From the definitions of $f$ and $g$, we can now bound $sc(n)$ as a function of $sc(n-2)$ when  $n\geq 27$:
\[sc(n)=|A_n|+|B(n)|+|C(n)|> sc(n-2)+sc(n-2)/(n/2)+0=\frac{n+2}{n}sc(n-2).\]
The equation $\frac{sc(n-2)}{sc(n)}< \frac{n}{n+2}$ also holds for $19\leq n \leq 26$.
\end{proof}

\begin{lem}
Let $n$ be an integer greater than or equal to $8$.  Then $\frac{sc(n-4)}{sc(n)}< \frac{n}{n+4}$.
\label{lem:scn-4}
\end{lem}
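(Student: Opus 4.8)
The plan is to bootstrap from Lemma~\ref{lem:scn-2} rather than reprove its (intricate) surjection argument. First I would write, for $n$ large enough,
\[
\frac{sc(n-4)}{sc(n)}=\frac{sc(n-4)}{sc(n-2)}\cdot\frac{sc(n-2)}{sc(n)},
\]
and bound each factor by Lemma~\ref{lem:scn-2}: applying that lemma with $n$ gives $sc(n-2)/sc(n)<n/(n+2)$, valid for $n\geq 19$, and applying it with $n-2$ in place of $n$ gives $sc(n-4)/sc(n-2)<(n-2)/n$, valid for $n-2\geq 19$, i.e.\ for $n\geq 21$. Multiplying these bounds, for every $n\geq 21$ one obtains
\[
\frac{sc(n-4)}{sc(n)}<\frac{n-2}{n}\cdot\frac{n}{n+2}=\frac{n-2}{n+2}.
\]

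Next I would invoke the elementary inequality $\frac{n-2}{n+2}\leq\frac{n}{n+4}$, which upon clearing denominators reads $(n-2)(n+4)\leq n(n+2)$, i.e.\ $n^{2}+2n-8\leq n^{2}+2n$, and so holds for all $n$. Hence $sc(n-4)/sc(n)<n/(n+4)$ for all $n\geq 21$. It then remains to treat the finitely many cases $8\leq n\leq 20$. Since $sc(m)\geq 1$ for every $m\geq 3$, each of the ratios in question is well defined, and the inequality $(n+4)\,sc(n-4)<n\,sc(n)$ can be confirmed one value at a time from the table of $sc(m)$ in Appendix~\ref{sec:appendix}.

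The ``hard part'' here is genuinely only bookkeeping: the two applications of Lemma~\ref{lem:scn-2} have slightly different ranges of validity, and it is the inner one ($n-2\geq 19$) that forces the cutoff $n=21$ and hence the finite check for $8\leq n\leq 20$. A more self-contained alternative would imitate the proof of Lemma~\ref{lem:scn-2}: partition $SC(n)$ as a disjoint union $A_n\cup B_n\cup C_n$, where $A_n$ is the image of the injection $SC(n-4)\hookrightarrow SC(n)$ sending diagonal hooks $(\delta_1,\dots,\delta_d)$ to $(\delta_1+4,\delta_2,\dots,\delta_d)$ (so that $|A_n|=sc(n-4)$), where $B_n$ consists of the self-conjugate partitions of $n$ with small diagonal gap $\delta_1-\delta_2\in\{2,4\}$ and carries a surjection from $SC(n-4)$ all of whose fibers have size less than $n/4$ (so that $|B_n|>\frac{4}{n}\,sc(n-4)$), and where $C_n$ absorbs the few exceptional partitions (such as square partitions). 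This would yield $sc(n)>sc(n-4)+\frac{4}{n}\,sc(n-4)=\frac{n+4}{n}\,sc(n-4)$ directly, but only at the price of reconstructing the delicate case analysis defining the surjection $g$; the bootstrap sidesteps all of that, which is why I would prefer it.
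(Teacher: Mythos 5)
Your bootstrap argument is exactly the paper's proof: factor $\frac{sc(n-4)}{sc(n)}=\frac{sc(n-4)}{sc(n-2)}\cdot\frac{sc(n-2)}{sc(n)}$, apply Lemma~\ref{lem:scn-2} twice (forcing $n\geq 21$), use $\frac{n-2}{n+2}<\frac{n}{n+4}$, and verify $8\leq n\leq 20$ directly. The proposal is correct, including your observation that the shifted application of Lemma~\ref{lem:scn-2} is what dictates the cutoff $n\geq 21$ and the finite check.
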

\begin{proof}Lemma~\ref{lem:scn-2} implies \[\frac{sc(n-4)}{sc(n)}=\frac{sc(n-4)}{sc(n-2)}\cdot\frac{sc(n-2)}{sc(n)}< \frac{n-2}{n}\cdot\frac{n}{n+2}=\frac{n-2}{n+2}<\frac{n}{n+4}\]
for $n\geq 21$.  The equation $\frac{sc(n-4)}{sc(n)}< \frac{n}{n+4}$ also holds for $8\leq n \leq 20$.
\end{proof}

\begin{remark}
The sequence $\{sc(n)\}_{n\geq 0}$ (A000700 in the On-Line Encyclopedia of Integer Sequences \cite{oeis}) starts 
\[\{1, 1, 0, 1, 1, 1, 1, 1, 2, 2, 2, 2, 3, 3, 3, 4, 5, 5, 5, 6, 7, 8, 8, 9, 11, 12, 12, 14\}.\] This, and Lemma~\ref{lem:scn-2} implies that $sc(n+2)>sc(n)$ for integers $n\geq 17$.  It also follows that $sc(n+2)-sc(n)>1$ for $n\geq 24$.  
\end{remark}

\section{Main Results}
\label{sec:mono}

In this section, we prove formulas for $sc_{t}(n)$ for certain values of $t$ and $n$, discussing their consequences for our monotonicity conjectures and the positivity of self-conjugate $t$-cores.

\subsection{Monotonicity in large $2t$-cores}\

We first discuss formulas for $sc_{2t}(n)$ for large values of $2t$.

\medskip
Because the largest diagonal hook $\delta_1$ is odd in every self-conjugate core partition, we have the following corollary of Lemma~\ref{p:bighooks}.
\begin{cor} 
\label{cor:n/2}
Every self-conjugate partition of $n$ is a $2t$-core for all integers $t$ satisfying $2t>n/2$. In particular,  $sc_{2t}(n)=sc(n)$ for integers $t$ satisfying $2t>n/2$. 
\end{cor}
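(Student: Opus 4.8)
The plan is to deduce Corollary~\ref{cor:n/2} directly from Lemma~\ref{p:bighooks} together with the parity fact that $\delta_1$ is always odd for a self-conjugate core partition. First I would recall what needs to be shown: that if $2t > n/2$, then no partition $\lambda \vdash n$ with $\lambda = \lambda^*$ contains a hook of length $2t$. Since every self-conjugate partition is trivially a $t$-core when it has no hook of size $t$, establishing the absence of $2t$-hooks gives $SC_{2t}(n) = SC(n)$ and hence $sc_{2t}(n) = sc(n)$, which is the second assertion.

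For the main argument I would fix a self-conjugate partition $\lambda$ of $n$ and consider an arbitrary box in position $(i,j)$ of its Young diagram. By Lemma~\ref{p:bighooks}, the hook length $h_{ij}$ satisfies $h_{ij} \le n/2$ unless $(i,j) = (1,1)$, in which case $h_{11} = \delta_1$. So the only hook that could possibly have length $2t > n/2$ is the corner hook $H_{11}$, whose length is $\delta_1$. Now I would invoke the parenthetical observation stated just before the corollary: the largest diagonal hook $\delta_1$ of a self-conjugate partition is always odd. (This is because $\delta_1 = h_{11} = \lambda_1 + \lambda_1^* - 1 = 2\lambda_1 - 1$, using self-conjugacy of $\lambda$.) Since $2t$ is even, $\delta_1 \neq 2t$, and therefore $H_{11}$ is not a $2t$-hook either. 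Consequently $\lambda$ has no hook of length $2t$ at all, so $\lambda \in SC_{2t}(n)$.

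Since $\lambda$ was an arbitrary element of $SC(n)$, this shows $SC(n) \subseteq SC_{2t}(n)$, and the reverse inclusion $SC_{2t}(n) \subseteq SC(n)$ is immediate from the definitions. Taking cardinalities yields $sc_{2t}(n) = sc(n)$ whenever $2t > n/2$, completing the proof.

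There is essentially no obstacle here — the result is a short corollary. The only point requiring any care is making sure the exceptional case of Lemma~\ref{p:bighooks} (the hook $h_{11} = \delta_1$) is handled, and that is exactly where the oddness of $\delta_1$ is used; without that parity input one could not rule out a corner $2t$-hook. Everything else is bookkeeping with the definitions.
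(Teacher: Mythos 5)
Your proof is correct and follows exactly the argument the paper intends: the paper derives this corollary in one line from Lemma~\ref{p:bighooks} together with the observation that $\delta_1$ is odd for a self-conjugate partition, which is precisely how you rule out the one exceptional hook $h_{11}$. No issues.
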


Proposition~\ref{prop:evenLarge} establishes a simple formula for $sc_{2t}(n)$ for values of $2t$ between $n/4$ and $n/2$, which will be useful for proving Theorem~\ref{thm:evenPC}. 

\begin{prop}
\label{prop:evenLarge}
Let $n$ be a positive integer and suppose $t$ is an integer satisfying $n/4<2t\leq n/2$.  Then 
\begin{equation} 
\label{eq:evenLarge}
sc_{2t}(n)= sc(n)-t\,sc(n-4t).
\end{equation}
\end{prop}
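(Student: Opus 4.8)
The plan is to specialize the general recursion in Equation~\eqref{eq:EvenSum} to the range $n/4 < 2t \le n/2$ and observe that only a single term of the sum survives. First I would pin down the summation index: the hypothesis $2t \le n/2$ gives $4t \le n$, so $\lfloor n/(4t)\rfloor \ge 1$, while $2t > n/4$ gives $8t > n$, hence $n/(4t) < 2$ and therefore $\lfloor n/(4t)\rfloor = 1$. Thus Equation~\eqref{eq:EvenSum} collapses to
\[
sc_{2t}(n) = sc(n) - sc_{2t}(n-4t)\,\hat{p}_{t}(1).
\]

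Next I would evaluate the two remaining ingredients. For $\hat{p}_{t}(1)$: a length-$t$ sequence of (possibly empty) partitions with total size $1$ must have exactly one coordinate equal to the partition $(1)$ and all others empty, so there are exactly $t$ such sequences, i.e.\ $\hat{p}_{t}(1) = t$. For the factor $sc_{2t}(n-4t)$: I want to replace it by $sc(n-4t)$ via Corollary~\ref{cor:n/2}, which applies provided $2t > (n-4t)/2$; this inequality rearranges to $8t > n$, which is again exactly the hypothesis $2t > n/4$. (When $2t = n/2$ this reads $sc_{2t}(0) = sc(0) = 1$, consistent with the usual convention for the empty partition.) Substituting these two values into the displayed identity yields $sc_{2t}(n) = sc(n) - t\,sc(n-4t)$, as desired.

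I do not anticipate a genuine obstacle: the entire content lies in Equation~\eqref{eq:EvenSum} together with Corollary~\ref{cor:n/2}, and the single point requiring care is that the interval $(n/4,\,n/2]$ is calibrated precisely so that (i) the index $i$ in the sum is forced to equal $1$, and (ii) the $2t$-core occurring in that surviving term, being a self-conjugate partition of $n-4t < n/2$, is automatically a $2t$-core. As a sanity check I would reconcile this with Lemma~\ref{2tremove}: a non-$2t$-core in this range has a pair of off-diagonal $2t$-hooks whose removal lands in $SC(n-4t)$, and no second pair can be removed since that would require $8t \le n$; the $t$ choices of self-conjugate $2t$-quotient match the count $\hat{p}_{t}(1) = t$. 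I expect the proposition to hold verbatim as stated.
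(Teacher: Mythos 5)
Your proposal is correct and follows the same route as the paper: specialize Equation~\eqref{eq:EvenSum} to the range $n/4<2t\leq n/2$ so that only the $i=1$ term survives, then use $\hat{p}_t(1)=t$ and Corollary~\ref{cor:n/2} (since $2t>(n-4t)/2$) to replace $sc_{2t}(n-4t)$ by $sc(n-4t)$. Your extra verification of $\lfloor n/(4t)\rfloor=1$ and the $2t=n/2$ edge case just makes explicit what the paper leaves implicit.
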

\begin{proof}
When $n/4<2t\leq n/2$, the sum in Equation~\eqref{eq:EvenSum} consists only of its first term, $sc_{2t}(n-4t)\hat{p}_t(1)$.  Equation~\eqref{eq:evenLarge} follows because $\hat{p}_t(1)=t$ and from Corollary~\ref{cor:n/2} because $2t>(n-4t)/2$.
\end{proof}

We must be careful for values of $2t$ near $n/2$.  Substituting $2t=2\lfloor n/4\rfloor$ and $2t=2\lfloor n/4\rfloor-2$ into Equation~\eqref{eq:evenLarge} establishes that when $n\geq 12$ and $n\not\equiv 2\mod 4$, $sc_{2\lfloor n/4\rfloor}(n)=sc_{2\lfloor n/4\rfloor-2}(n)-1$, which explains the upper bound we give for the even monotonicity conjecture.  Explicit formulas are given as Corollary~\ref{cor:2n/4}. 

\begin{cor}
\label{cor:2n/4}
 Let $n$ be an integer greater than or equal to $4$. Then
\begin{equation*} 
sc_{2\lfloor n/4\rfloor}(n)=\left.\begin{cases} 
sc(n)-\lfloor n/4\rfloor& \textup{when $n\equiv 0,1,3\mod 4$} \\ 
sc(n)& \textup{when $n\equiv 2\mod 4$}\end{cases}
\right\}.
\end{equation*}
Furthermore, let $n$ be an integer greater than or equal to $12$. Then
\begin{equation*}
sc_{2\lfloor n/4\rfloor-2}(n)=sc(n)-(\lfloor n/4\rfloor-1).
\end{equation*}
\end{cor}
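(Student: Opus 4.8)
The plan is to read off both identities from Proposition~\ref{prop:evenLarge} with an appropriate choice of $t$, using the tabulated small values of $\{sc(m)\}_{m\ge 0}$ from the remark preceding this corollary (so $sc(0)=sc(1)=1$, $sc(2)=0$, and $sc(3)=sc(4)=sc(5)=sc(6)=sc(7)=1$). Throughout I will write $n=4m+r$ with $m=\lfloor n/4\rfloor$ and $r\in\{0,1,2,3\}$, so that the quantity $n-4t$ is governed by the residue $r$.

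For the first formula I would take $t=\lfloor n/4\rfloor$, hence $2t=2\lfloor n/4\rfloor$. The hypothesis of Proposition~\ref{prop:evenLarge} is that $n/4<2t\le n/2$: the inequality $2m\le 2m+r/2=n/2$ is automatic, and $n/4=m+r/4<2m$ amounts to $m>r/4$, which holds since $n\ge 4$ gives $m\ge 1>3/4\ge r/4$ (in particular $t\ge 1$, so $2t$ is a genuine core size). Proposition~\ref{prop:evenLarge} then yields $sc_{2\lfloor n/4\rfloor}(n)=sc(n)-m\,sc(n-4m)=sc(n)-\lfloor n/4\rfloor\cdot sc(r)$, and substituting $sc(0)=sc(1)=sc(3)=1$ and $sc(2)=0$ produces exactly the stated dichotomy, the value $sc(n)$ occurring precisely when $r=2$.

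For the second formula I would instead take $t=\lfloor n/4\rfloor-1$, hence $2t=2\lfloor n/4\rfloor-2$; here the hypothesis $n\ge 12$ is exactly what pushes $2t$ into the admissible window. With $m\ge 3$, the upper bound $2m-2\le n/2-2<n/2$ is clear, and the lower bound $n/4<2m-2$ is equivalent to $m>2+r/4$, which follows from $m\ge 3$ and $r\le 3$. Proposition~\ref{prop:evenLarge} now gives $sc_{2\lfloor n/4\rfloor-2}(n)=sc(n)-(m-1)\,sc(n-4(m-1))=sc(n)-(\lfloor n/4\rfloor-1)\cdot sc(r+4)$, and since $sc(4)=sc(5)=sc(6)=sc(7)=1$ this equals $sc(n)-(\lfloor n/4\rfloor-1)$, as claimed.

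The one step that actually requires attention---and the reason the thresholds $n\ge 4$ and $n\ge 12$ appear---is verifying that the chosen $2t$ lies in the half-open interval $(n/4,n/2]$ on which Proposition~\ref{prop:evenLarge} is valid; once that range check is in place, the residue $r$ simultaneously records where $2t$ sits relative to $n/2$ and which tabulated value $sc(r)$ or $sc(r+4)$ enters, and everything else is routine arithmetic.
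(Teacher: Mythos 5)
Your proposal is correct and follows the paper's own route: both read the two identities off Proposition~\ref{prop:evenLarge} with $t=\lfloor n/4\rfloor$ and $t=\lfloor n/4\rfloor-1$, note that the coefficient $sc(n-4t)$ depends only on $n$ modulo $4$, and obtain the thresholds $n\geq 4$ and $n\geq 12$ from the admissibility condition $n/4<2t\leq n/2$. Your write-up simply makes explicit the range checks and the small values $sc(0),\dots,sc(7)$ that the paper's terse proof leaves to the reader.
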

\begin{proof}
In the formula for $sc_{2t}(n)$, the coefficients of $t$ are simply $sc(n-4t)$, which depends on $n$ modulo $4$.  The range for which the formulas are valid comes from solving $n/4\leq 2\lfloor n/4\rfloor$ or $n/4\leq 2\lfloor n/4\rfloor-2$.  
\end{proof}

\begin{remark}
For successively smaller values of $2t$, formulas similar to those in Corollary~\ref{cor:2n/4} can be found.  For example, when $n$ is an integer greater than or equal to $52$, then
\begin{equation*} 
sc_{2\lfloor n/4\rfloor-12}(n)=\left.\begin{cases} 
sc(n)-11(\lfloor n/4\rfloor-6)& \textup{when $n\equiv 0\mod 4$} \\ 
sc(n)-12(\lfloor n/4\rfloor-6)& \textup{when $n\equiv 1,2\mod 4$} \\ 
sc(n)-14(\lfloor n/4\rfloor-6)& \textup{when $n\equiv 3\mod 4$}\end{cases}
\right\}.
\end{equation*}
In general for self-conjugate $(2\lfloor n/4\rfloor-2i)$-cores, the range of validity of the formula for $sc_{2\lfloor n/4\rfloor-2i}$ is for $n\geq 4(2i+1)$.  While Equation~\eqref{eq:evenLarge} does encompass all formulas of this type, these formulas are interesting in their own right.
\end{remark}

In general, we can apply Equation~\eqref{eq:EvenSum} repeatedly to find a formula for $sc_{2t}(n)$ for all values of $2t$; the formula only involves polynomials of $t$ and values of $sc(m)$ for $m\leq n$.
\begin{thm}
\label{thm:sc2t}
We have the following formula for $sc_{2t}(n)$.
\[sc_{2t}(n)=\sum_{\substack{I=(i_1,\hdots,i_k)\\|I|\leq\lfloor \frac{n}{4t}\rfloor}} (-1)^k \hat{p}_t(i_1)\cdots\hat{p}_t(i_k)\,sc(n-4|I|t),\]
where the sum is over all sequences of positive integers $I=(i_1,\hdots,i_k)$ such that its sum $|I|=i_1+\cdots+i_k\leq\lfloor \frac{n}{4t}\rfloor$ 
\end{thm}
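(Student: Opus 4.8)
The plan is to iterate Equation~\eqref{eq:EvenSum} until the recursion bottoms out, tracking the sign and the product of $\hat p_t$-factors that accumulates at each step. First I would set up the recursion cleanly: for a fixed $t$, write $F(n) = sc_{2t}(n)$ and $G(n) = sc(n)$, so that Equation~\eqref{eq:EvenSum} reads $F(n) = G(n) - \sum_{i=1}^{\lfloor n/4t\rfloor} F(n-4it)\,\hat p_t(i)$. The key observation is that this is a \emph{finite} recursion: each application replaces $F$ evaluated at $n$ by $F$ evaluated at strictly smaller arguments (since $i\geq 1$ forces a drop of at least $4t$), and $F$ vanishes for negative arguments, so after finitely many substitutions only the $G$-terms survive.

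The main step is a careful induction (on $\lfloor n/4t\rfloor$, or equivalently on $n$ with $t$ fixed) showing that repeated substitution produces exactly the claimed sum over sequences $I = (i_1,\dots,i_k)$ of positive integers with $|I|\le\lfloor n/4t\rfloor$. The bijective bookkeeping is the crux: a sequence $I$ of length $k$ records a ``history'' of substitutions — first we peeled off $i_1$ pairs of $2t$-hooks (weight $\hat p_t(i_1)$), then from the resulting core partition of $n-4i_1t$ we peeled off $i_2$ more pairs (weight $\hat p_t(i_2)$), and so on — and each such history contributes $(-1)^k \hat p_t(i_1)\cdots\hat p_t(i_k)\,sc(n-4|I|t)$, the sign coming from the minus sign in front of the sum being applied $k$ times. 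Concretely, from Equation~\eqref{eq:EvenSum}, $sc_{2t}(n) = sc(n) - \sum_{i_1\ge 1}\hat p_t(i_1)\,sc_{2t}(n-4i_1t)$; applying the formula again to each $sc_{2t}(n-4i_1t)$ and distributing yields the $k=0$ term $sc(n)$, the $k=1$ terms $-\sum_{i_1}\hat p_t(i_1)\,sc(n-4i_1t)$, and the remaining terms indexed by sequences of length $\ge 2$. The induction hypothesis applied to each $sc_{2t}(n-4i_1t)$ (valid since $n-4i_1t < n$) expresses it as the analogous sum over sequences $(i_2,\dots,i_k)$ with $i_2+\cdots+i_k \le \lfloor (n-4i_1t)/4t\rfloor = \lfloor n/4t\rfloor - i_1$; prepending $i_1$ and negating gives precisely the sequences of length $\ge 1$ with total sum $\le \lfloor n/4t\rfloor$, completing the step. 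The empty sequence ($k=0$) giving $sc(n)$ is the base contribution that appears at every level but is only counted once.

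I would then note that convergence/termination is automatic: the constraint $|I|\le\lfloor n/4t\rfloor$ together with $i_j\ge 1$ bounds $k\le\lfloor n/4t\rfloor$, so the sum is finite and well-defined, and the inductive unfolding terminates because the argument $n-4|I|t$ strictly decreases. The only mild subtlety — the main obstacle, such as it is — is making the ``distribute and reindex'' step rigorous without hand-waving: one must verify that collecting the contributions of all length-$k$ sequences after $k$ rounds of substitution exactly matches the coefficient in the claimed closed form, with no double-counting and no missing terms. This is handled cleanly by the induction on $n$ just sketched, where the single unfolding $sc_{2t}(n)=sc(n)-\sum_{i_1\ge1}\hat p_t(i_1)sc_{2t}(n-4i_1t)$ reduces everything to the inductive hypothesis; no new combinatorial identity is needed beyond Equation~\eqref{eq:EvenSum} itself.
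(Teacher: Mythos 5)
Your proposal is correct and matches the paper's approach: the paper presents Theorem~\ref{thm:sc2t} precisely as the result of applying Equation~\eqref{eq:EvenSum} repeatedly, and your induction on $n$ (using $\lfloor (n-4i_1t)/4t\rfloor=\lfloor n/4t\rfloor-i_1$ to prepend $i_1$ to the shorter sequences) is the standard rigorous version of that unfolding. The only caveat is that the ``peeling history'' language is motivational rather than literal---the sign alternation comes from iterating the inclusion--exclusion identity, not from a bijection with actual hook-removal sequences---but your algebraic induction carries the proof on its own.
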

We now prove Theorem \ref{thm:evenPC}.
\begin{proof}[Proof of Theorem~\ref{thm:evenPC}]
By Equation~\eqref{eq:evenLarge}, it suffices to prove $(t+1)\,sc(n-4t-4)<t\,sc(n-4t)$, which is equivalent to $\frac{sc(n-4t-4)}{sc(n-4t)}<\frac{t}{t+1}$.  Lemma~\ref{lem:scn-4} implies that $\frac{sc(n-4t-4)}{sc(n-4t)}< \frac{n-4t}{n-4t+4}$ when $n-4t\geq 8$; this condition is satisfied because the upper bound for $2t$ under consideration implies $n-4t\geq n-4(\lfloor n/4\rfloor-2)\geq 8$.  

Last, because $n/4< 2t$ then $(n-4t)(t+1)< t(n-4t+4)$, from which we have $\frac{sc(n-4t-4)}{sc(n-4t)}\leq\frac{n-4t}{n-4t+4}< \frac{t}{t+1}$.  This completes the proof.
\end{proof}

\subsection{Positivity and monotonicity in small $2t$-cores}\

Before discussing monotonicity for small values of $2t$, we first discuss what is known about positivity in self-conjugate $2t$-core partitions.

The only partitions which are $2$-cores are the staircase partitions $\lam=(k,k-1,\hdots,2,1)$, which are all self-conjugate.  As a consequence, $sc_2(n)$ is non-zero exactly when $n$ is a triangular number.  Ono and Sze \cite[Theorem 3]{OnoSze} characterize the integers having no self-conjugate $4$-core: $sc_4(n)=0$ if and only if the prime factorization of $8n+5$ contains a prime of the form $4k+3$ to an odd power.

Baldwin et al.\ \cite{Baldwin} prove that $sc_t(n)$ is positive for $t\geq 8$ and $n\ne 2$, and give the example of $sc_6(13)=0$ to show that $sc_6(n)$ is not always positive. However, they do not characterize when $sc_6(n)$ is zero.  By using its generating function, we generated the values of $sc_6(n)$ for $0\leq n\leq 10000$, from which we conjecture the following.

\begin{conj} Let $n$ be a positive integer.  Then $sc_6(n)> 0$ except when $n\in\{2,12,13,73\}$.
\label{conj:6core}
\end{conj}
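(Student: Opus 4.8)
The plan is to first turn the generating function into an exact \emph{lacunary} recursion and then prove positivity of the resulting alternating sum. Setting $t=6$ in Equation~\eqref{eq:scgf} gives $\sum_{n\ge0}sc_6(n)q^n=\prod_{n\ge1}(1-q^{12n})^3\cdot\prod_{n\ge1}(1+q^{2n-1})$. Applying Jacobi's identity $\prod_{n\ge1}(1-q^n)^3=\sum_{k\ge0}(-1)^k(2k+1)q^{k(k+1)/2}$ with $q\mapsto q^{12}$, together with the classical bijection that makes $\prod_{n\ge1}(1+q^{2n-1})=\sum_{m\ge0}sc(m)q^m$ (the diagonal hook lengths of a self-conjugate partition are exactly a set of distinct odd positive integers), yields
\[
sc_6(n)=\sum_{\substack{k\ge0\\ 6k(k+1)\le n}}(-1)^k(2k+1)\,sc\!\left(n-6k(k+1)\right)
\]
(compare Equation~\eqref{eq:EvenSum} with $t=3$). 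Everything then reduces to showing this sum is positive for $n\notin\{2,12,13,73\}$. Since Conjecture~\ref{conj:6core} is as yet unproved, what follows is a roadmap for what would become a new theorem rather than an appeal to results already in hand.

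The first route mirrors the approach that settled $c_t(n)$ (Granville--Ono \cite{GranOno}) and $sc_4(n)$ (Ono--Sze \cite{OnoSze}). Rewriting the product in terms of the Dedekind eta function shows that $\sum_n sc_6(n)\,q^{(24n+35)/24}$ equals the eta-quotient $\eta(12\tau)^3\eta(2\tau)^2\big/\bigl(\eta(\tau)\eta(4\tau)\bigr)$, which (by the standard order-at-the-cusps computation) is a holomorphic modular form of weight $3/2$ on a suitable congruence subgroup with character. Decomposing it into its Eisenstein and cuspidal parts, the Eisenstein part is, by Siegel--Weil, an explicit average of ternary theta series, so its $(24n+35)$-st coefficient is a constant times a weighted representation number of $24n+35$ by a genus of ternary quadratic forms. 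One then shows that this main term is positive for \emph{every} $n$ and of size $\gg n^{1/2-\varepsilon}$: there should be no $4^a(8b+7)$-type square-class obstruction, consistent both with the finiteness of the conjectured exceptional set and with the fact that $24n+35$ equals $83,\,323,\,347,\,1787$ for $n=2,12,13,73$, none of that shape. Finally one bounds the cuspidal contribution; Duke's coefficient bound $O(n^{13/28+\varepsilon})$ gives positivity for all large $n$ but only ineffectively, so to produce an explicit cutoff $N_0$ I would instead split the cusp form along the genus of the quadratic form, as in \cite{OnoSze}, so that the relevant coefficients can be compared termwise against the Eisenstein main term. A finite computation (already carried out here to $n=10000$) then handles $n<N_0$ and pins down the exceptional set.

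A second, more elementary route stays with the displayed alternating sum and the growth estimates of Section~\ref{sec:facts}. Pairing the $k=2j$ term with the $k=2j-1$ term, positivity of the whole sum would follow from $(4j+1)\,sc\bigl(n-12j(2j+1)\bigr)\ge(4j-1)\,sc\bigl(n-12j(2j-1)\bigr)$ for every relevant $j$, i.e.\ from an \emph{upper} bound $sc(m)/sc(m-24j)\le 1+2/(4j-1)$. Lemmas~\ref{lem:scn-2} and \ref{lem:scn-4} already supply lower bounds on consecutive ratios; the missing ingredient is a matching upper bound $sc(n)/sc(n-2)\le 1+O(1/\sqrt n)$, provable either by an injection--surjection argument dual to the proof of Lemma~\ref{lem:scn-2} or via Meinardus-type asymptotics for partitions into distinct odd parts. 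Given such a bound the pairing inequality holds once $n$ is large compared with $j^2$; the terms with $j$ near its maximum $\approx\sqrt{n/24}$ must be treated separately, using that their $sc$-arguments are $O(1)$ (and $sc(m)\ge1$ for $m\ge3$ and $m=0,1$), so that those terms are dominated by the positive contributions from small $k$.

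The hard part, in either approach, is winning the main-term-versus-error fight \emph{with explicit constants}. On the analytic side that means making Duke's bound effective (or bypassing it via the genus decomposition) and producing a cutoff $N_0$ small enough for the finite check to be feasible; on the elementary side it means proving a sharp enough upper bound on the growth of $sc(n)$ and controlling the tail of the alternating sum uniformly in $j$. Once a concrete $N_0$ is available, verifying $sc_6(n)>0$ for $n<N_0$ with $n\notin\{2,12,13,73\}$, and $sc_6(n)=0$ for those four values, is a routine expansion of the generating function.
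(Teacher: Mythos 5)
The statement you are addressing is not proved in the paper at all: it is stated as a conjecture, supported only by expanding the generating function \eqref{eq:scgf} and checking $sc_6(n)$ for $0\leq n\leq 10000$. So there is no paper proof to compare against, and your submission, as you yourself say, is a research roadmap rather than a proof. The parts you do establish are correct and worth keeping: with $t=6$ in \eqref{eq:scgf}, Jacobi's identity for $\prod(1-q^n)^3$ together with $\prod(1+q^{2n-1})=\sum_m sc(m)q^m$ gives the exact lacunary formula $sc_6(n)=\sum_{6k(k+1)\le n}(-1)^k(2k+1)\,sc\bigl(n-6k(k+1)\bigr)$, and the identification of the generating function with the weight-$3/2$ eta-quotient $\eta(12\tau)^3\eta(2\tau)^2/\bigl(\eta(\tau)\eta(4\tau)\bigr)$ checks out (including the shift $24n+35$, and the observation that $83,323,347,1787$ avoid the $4^a(8b+7)$ class). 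But neither of your two routes is carried to a conclusion, so the conjecture is not settled by what you have written.

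The gaps are concrete. On the analytic route, the entire content of a Granville--Ono/Ono--Sze style proof is precisely the two steps you defer: proving that the Eisenstein coefficient is positive and of explicit polynomial size for all $n$, and bounding the cuspidal coefficients \emph{effectively} so as to obtain a usable cutoff $N_0$; nothing toward either step is supplied, and the fact that $sc_6$ actually vanishes at $n=2,12,13,73$ (with no square-class explanation) shows the cusp part does overtake the main term for small $n$, so the explicit-constants fight is unavoidable. On the elementary route, the pairing step needs $sc(m)/sc(m-24j)\le 1+2/(4j-1)$, but since $sc(m)$ grows like $\exp(c\sqrt{m})$ this ratio bound fails whenever $j$ is appreciably larger than $m^{1/4}$ --- a wide middle range of the sum, since $j$ runs up to about $\sqrt{n/24}$ --- not merely for $j$ ``near its maximum'' where the $sc$-arguments are bounded; you offer no mechanism for those middle terms, and the auxiliary upper bound $sc(n)/sc(n-2)\le 1+O(1/\sqrt{n})$ you invoke is itself unproven (the paper's Lemmas~\ref{lem:scn-2} and \ref{lem:scn-4} go in the opposite direction). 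As it stands, the statement remains exactly where the paper leaves it: a conjecture verified computationally up to $n=10000$.
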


In the even monotonicity conjecture, we give the lower bound $2t$ equals $6$.  Indeed, there are integers $n$ such that  $sc_6(n)\leq sc_4(n)$, even for values of $n$ larger than $15$. (Corollary~\ref{cor:2n/4}  establishes that $sc_6(15)<sc_4(15)$.)  We conjecture that the set of such integers is finite, again aided by a computer search of non-negative integers $n$ up to $10000$.

\begin{conj} Let $n$ be an integer larger than $15$. Then $sc_6(n)<sc_4(n)$ when $n\in\{112,180,265\}$ and $sc_6(n)=sc_4(n)$ when $n\in\{27, 28, 33, 40, 73, 75, 118, 190, 248\}$.
\end{conj}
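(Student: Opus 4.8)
\emph{Proof proposal.} The statement bundles a finite verification with an asymptotic fact, so the plan splits in two. For the asymptotic half, combine the two even-$t$ generating functions of \eqref{eq:scgf} with the classical identity $\sum_{m\ge 0}sc(m)q^m=\prod_{n\ge 1}(1+q^{2n-1})$ to get
\[
\sum_{n\ge 0}\big(sc_6(n)-sc_4(n)\big)q^n=\Big(\sum_{m\ge 0}sc(m)q^m\Big)\Big[\prod_{n\ge 1}(1-q^{12n})^3-\prod_{n\ge 1}(1-q^{8n})^2\Big].
\]
Writing the bracket as $\sum_k b_kq^k$ --- a sparse integer series supported on multiples of $4$, with $b_8=2$, $b_{12}=-3$, $b_{16}=1$, $b_{24}=-2$, $\ldots$, and with $\sum_k b_k=0$ since both products vanish at $q=1$ --- this reads $sc_6(n)-sc_4(n)=\sum_k b_k\,sc(n-k)$. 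So it suffices to produce an explicit $N_0$ with $sc_6(n)>sc_4(n)$ for every $n\ge N_0$, and then to verify the finitely many cases $16\le n<N_0$ directly from this recursion or from \eqref{eq:scgf}, reading off that the $n$ with $sc_6(n)\le sc_4(n)$ are exactly the twelve in the statement, with equality on the first nine and strict reverse inequality on $\{112,180,265\}$. The computation already carried out to $n=10000$ renders the list reliable; the substance of a proof is the bound $N_0$.

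A first, elementary attempt at the asymptotic half imitates the proofs of Theorems~\ref{thm:evenPC} and \ref{thm:oddPC}: estimate $\sum_k b_k\,sc(n-k)$ with the ratio bounds of Lemmas~\ref{lem:scn-2} and \ref{lem:scn-4} and the remark that $sc(n+2)>sc(n)$ for $n\ge 17$. The leading block $2\,sc(n-8)-3\,sc(n-12)+sc(n-16)$ is positive as soon as $sc$ is eventually convex along residue classes, since then $2\big(sc(n-8)-sc(n-12)\big)\ge 2\big(sc(n-12)-sc(n-16)\big)>sc(n-12)-sc(n-16)$. The obstacle is the oscillating tail: the partial sums $\sum_{j\le k}b_j$ are not of one sign (they equal $-1$ at $k=12$), so Abel summation does not close the estimate; a complete argument this way would need an eventual-convexity theorem for $sc(n)$ --- itself demanding a bijective/estimation argument in the style of Lemma~\ref{lem:scn-2} --- together with delicate tail bookkeeping, and it is not clear that it succeeds.

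A more promising route exploits the modularity hidden in \eqref{eq:scgf}: rewriting the generating functions as eta quotients,
\[
\sum_{n\ge 0}sc_4(n)q^n=q^{-5/8}\,\frac{\eta(2\tau)^2\eta(8\tau)^2}{\eta(\tau)\eta(4\tau)},\qquad\sum_{n\ge 0}sc_6(n)q^n=q^{-35/24}\,\frac{\eta(2\tau)^2\eta(12\tau)^3}{\eta(\tau)\eta(4\tau)},
\]
so that $sc_4(n)$ is the $(8n+5)$-th coefficient of a weight-$1$ modular form and $sc_6(n)$ the $(24n+35)$-th coefficient of a weight-$3/2$ modular form. These eta quotients are a priori weakly holomorphic, but they are holomorphic at every cusp because $sc_4(n)$ and $sc_6(n)$ grow only polynomially --- the pole of $\prod(1+q^{2n-1})$ at $q=1$ is exactly cancelled by the zero of $\prod(1-q^{2tn})^{t/2}$. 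The weight-$1$ form then gives $sc_4(n)=O\big(d(8n+5)\big)$ (one expects, moreover, an exact divisor-sum formula for $sc_4(n)$, consistent with the Ono--Sze vanishing criterion). The weight-$3/2$ form has nonzero constant term, hence a nonzero Eisenstein (Cohen--Eisenstein) component, which should contribute a main term $sc_6(n)=c\sqrt n+o(\sqrt n)$ with $c>0$ --- the cuspidal error being $o(\sqrt n)$ unconditionally by Iwaniec's bound for half-integral weight. Combining, $sc_6(n)\ge\tfrac{c}{2}\sqrt n>sc_4(n)$ for all $n$ past an effective $N_0$, after which $16\le n\le N_0$ is routine to check.

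The main obstacle is the lower bound $sc_6(n)\gg\sqrt n$: one must compute the Eisenstein coefficient $c$ and check $c\ne 0$ on the progression $24n+35$, and then bound the weight-$3/2$ cusp-form error effectively below the main term. This is genuinely difficult --- any proof of the conjecture gives $sc_6(n)>sc_4(n)\ge 0$, hence $sc_6(n)>0$, for all $n$ outside the finite exceptional set, so it would settle the positivity of $6$-core partitions for large $n$, which Baldwin et al.\ established only for $t\ge 8$ (Conjecture~\ref{conj:6core} is the sharp form). And if the relevant Eisenstein coefficient happened to vanish and the weight-$3/2$ form were cuspidal on this progression, then $sc_6(n)$ could be small or vanish for large $n$ and the comparison with $sc_4(n)$ would no longer be governed by a main term, making even the asymptotic half delicate.
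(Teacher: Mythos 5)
This statement is one of the paper's \emph{conjectures}: the authors offer no proof, only a computer verification of $sc_6(n)$ versus $sc_4(n)$ for $n\le 10000$ obtained from the generating function \eqref{eq:scgf}, so there is no argument in the paper for you to match. Judged on its own terms, your proposal is a reasonable research plan rather than a proof, and you say as much yourself. The identities you set up are correct: using \eqref{eq:scgf} and $\sum sc(m)q^m=\prod(1+q^{2n-1})$ the difference series does factor as you write, and the eta-quotient forms of the two generating functions (weight $1$ at $8n+5$ for $sc_4$, weight $3/2$ at $24n+35$ for $sc_6$) check out. But every quantitatively hard step is left open. The elementary route is, as you note, dead on arrival: the partial sums of the sparse series $\sum b_kq^k$ change sign, and no eventual-convexity statement for $sc(n)$ is available in the paper (Lemmas~\ref{lem:scn-2} and \ref{lem:scn-4} give only ratio upper bounds), so Abel summation cannot be closed. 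The modular route requires (i) verifying holomorphy of the two eta quotients at \emph{all} cusps (the standard order formula, not just the growth heuristic at $q\to 1$, and note $sc(n)$ itself grows subexponentially, so ``polynomial growth'' of $sc_6(n)$ is an input that needs its own justification); (ii) computing the Cohen--Eisenstein component of the weight-$3/2$ form and proving it is bounded below by a positive multiple of $\sqrt{n}$ on the single square class $24n+35$, where spinor-type obstructions and vanishing on sparse subsequences are exactly the danger you flag; and (iii) an \emph{effective} bound on the cuspidal error (Iwaniec/Duke-type estimates can in principle be made effective, but extracting an explicit $N_0$ is a substantial computation). None of these is carried out, and without an explicit $N_0$ the finite verification that the exceptional set is exactly $\{27,28,33,40,73,75,118,190,248\}\cup\{112,180,265\}$ cannot even be formulated as a finite check.

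Two further cautions. First, the claim ``$\sum_k b_k=0$ since both products vanish at $q=1$'' is only an Abel-summation statement (the coefficient sums do not converge absolutely) and plays no role in the argument, so either drop it or state it as such. Second, your own observation that success would immediately yield $sc_6(n)>0$ for all large $n$ --- strengthening what is known (Baldwin et al.\ cover only $t\ge 8$, and the paper leaves the $t=6$ case as Conjecture~\ref{conj:6core}) --- is the clearest signal that the missing Eisenstein/cusp-form analysis is the genuine content here, not a routine appendix to the finite computation. As it stands, the proposal identifies a plausible path but proves neither the asymptotic half nor the finite half of the statement.
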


There are no values of $20 \leq n \leq 10000$ such that $sc_8(n)\leq sc_6(n)$.

\subsection{Monotonicity in large $(2t+1)$-cores}\

For $2t+1>n$, there are no partitions of $n$ containing a hook length of $2t+1$.  By Lemma~\ref{p:bighooks}, we know that the values of $sc_{2t+1}(n)$ for $2t+1>n/2$ are determined by the number of self-conjugate core partitions that have $2t+1$ as its first diagonal hook.  In other words, 

\begin{cor}
Let $n$ be a positive integer and suppose that $t$ satisfies $n/2<2t+1\leq n$.  Then \[sc_{2t+1}(n)=sc(n)-sc(n-2t-1).\]
\label{cor:n/2odd}
\end{cor}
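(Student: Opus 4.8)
The plan is to apply Equation~\eqref{eq:OddSum} and show that the sum collapses to a single term when $2t+1$ is large relative to $n$. First I would observe that under the hypothesis $n/2 < 2t+1 \leq n$, the summation index in Equation~\eqref{eq:OddSum} is constrained by $1 \leq 2i+j \leq \lfloor n/(2t+1)\rfloor = 1$, since $2t+1 > n/2$ forces $n/(2t+1) < 2$. Hence the only admissible pair $(i,j)$ with $i,j \geq 0$ is $(i,j)=(0,1)$: the value $i=1$ would require $2i+j \geq 2$, and $j \geq 2$ is likewise excluded (and in any case $j=2$ is never attained, as noted after Equation~\eqref{eq:OddSum}).

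Next I would evaluate the surviving term. With $(i,j)=(0,1)$ the summand is $sc_{2t+1}\bigl(n-(2t+1)\bigr)\,\hat{p}_t(0)\,sc(1)$. Here $\hat{p}_t(0)=1$ (the unique length-$t$ sequence of partitions all of which are empty) and $sc(1)=1$ (the partition $(1)$ is self-conjugate), so the term is simply $sc_{2t+1}(n-2t-1)$.

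Finally I would argue that $sc_{2t+1}(n-2t-1) = sc(n-2t-1)$, so that Equation~\eqref{eq:OddSum} reads $sc_{2t+1}(n) = sc(n) - sc(n-2t-1)$ as claimed. This is where a small amount of care is needed: one must check that every self-conjugate partition of $n-2t-1$ is automatically a $(2t+1)$-core. Since $n/2 < 2t+1$, we have $2t+1 > n/2 \geq (n-2t-1)/2 + (t + 1/2) > (n-2t-1)/2$, so $2t+1$ exceeds half of $n-2t-1$; by Lemma~\ref{p:bighooks} the only hook length in a self-conjugate partition of $m := n-2t-1$ that can exceed $m/2$ is the first diagonal hook $\delta_1$, and such a partition fails to be a $(2t+1)$-core only if $\delta_1 = 2t+1$. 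But $\delta_1 \leq m = n-2t-1 < 2t+1$ (using $2t+1 > n/2$ again, which gives $n - 2t - 1 < 2t+1$), so this is impossible; hence $SC_{2t+1}(m) = SC(m)$ and $sc_{2t+1}(m) = sc(m)$.

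I do not anticipate a genuine obstacle here — the statement is essentially a direct specialization of Equation~\eqref{eq:OddSum}. The only point requiring attention is the bookkeeping on the index set (confirming $(0,1)$ is the sole contributor and correctly reading off $\hat{p}_t(0)=sc(1)=1$) together with the range check that makes $sc_{2t+1}(n-2t-1)$ collapse to $sc(n-2t-1)$; both follow cleanly from the inequality $2t+1 > n/2$ and Lemma~\ref{p:bighooks}.
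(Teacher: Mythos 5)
Your argument is correct, but it is not the paper's derivation. The paper obtains Corollary~\ref{cor:n/2odd} directly from Lemma~\ref{p:bighooks}: since every hook of a self-conjugate partition of $n$ other than $h_{11}=\delta_1$ has length at most $n/2<2t+1$, a self-conjugate partition of $n$ fails to be a $(2t+1)$-core exactly when its first diagonal hook equals $2t+1$, and deleting that diagonal hook gives a bijection with $SC(n-2t-1)$ (here $n-2t-1<2t+1$ guarantees that wrapping a $(2t+1)$-hook around any element of $SC(n-2t-1)$ is always possible), so the non-cores number $sc(n-2t-1)$. You instead specialize Equation~\eqref{eq:OddSum}: the bound $\lfloor n/(2t+1)\rfloor=1$ forces the single index $(i,j)=(0,1)$, the factors $\hat{p}_t(0)=sc(1)=1$ are read off correctly, and the collapse $sc_{2t+1}(n-2t-1)=sc(n-2t-1)$ is justified since $n-2t-1<2t+1$ (indeed here you do not even need Lemma~\ref{p:bighooks}: every hook length of a partition of $m$ is at most $m<2t+1$). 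Your route is exactly parallel to how the paper proves Propositions~\ref{prop:evenLarge} and \ref{prop:oddLarge}, and it is mechanical once Equation~\eqref{eq:OddSum} is available; the paper's route is more self-contained, bypassing the core--quotient recursion entirely, and it makes the combinatorial meaning of the subtracted term transparent (it counts the self-conjugate partitions of $n$ with $\delta_1=2t+1$). Both are complete proofs, and your final range check plays the same role as the inequality $n-2t-1<2t+1$ implicit in the paper's bijection.
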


Corollaries~\ref{cor:n/2} and \ref{cor:n/2odd} imply:

\begin{cor}
For fixed $n\geq 5$, the sequence $\{sc_t(n)\}_{t\geq 2}$ is not monotonic.
\label{cor:monot}
\end{cor}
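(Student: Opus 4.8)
The plan is to exhibit, for each fixed $n\geq 5$, two consecutive values of $t$ at which the sequence $\{sc_t(n)\}_{t\geq 2}$ fails to be monotonic, using the closed forms already established for $sc_t(n)$ when $t$ is large relative to $n$. The key observation is that Corollary~\ref{cor:n/2} gives $sc_{2t}(n)=sc(n)$ as soon as $2t>n/2$, whereas Corollary~\ref{cor:n/2odd} gives $sc_{2t+1}(n)=sc(n)-sc(n-2t-1)$ whenever $n/2<2t+1\leq n$. So in the overlap region $n/2<2t+1\leq n$ (equivalently the relevant even neighbor also exceeds $n/2$), an odd-index term is \emph{strictly smaller} than $sc(n)$ precisely when $sc(n-2t-1)>0$, while the neighboring even-index terms equal $sc(n)$. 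This produces a strict down-step followed by a strict up-step, contradicting monotonicity in either direction.

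The steps, in order: (1) Fix $n\geq 5$ and choose an odd integer $m=2t+1$ with $n/2<m\leq n$ such that $sc(n-m)>0$; concretely, taking $m$ to be the largest odd integer with $n/2<m\le n$ forces $n-m\in\{0,1\}$, both of which have a self-conjugate partition ($sc(0)=sc(1)=1$), so $sc(n-m)\ge 1$. One should check this $m$ exists and satisfies $m\le n$ for every $n\ge5$ — the interval $(n/2,n]$ has length $n/2\ge 5/2>1$, so it contains an odd integer, and since $n\ge5$ we can also arrange the even neighbors to land in the ranges required below. (2) Apply Corollary~\ref{cor:n/2odd} to get $sc_m(n)=sc(n)-sc(n-m)<sc(n)$. (3) For the even neighbors $m-1$ and $m+1$: since $m>n/2$ we have $m+1>n/2$ hence $2\cdot\frac{m+1}{2}>n/2$, so Corollary~\ref{cor:n/2} gives $sc_{m+1}(n)=sc(n)$; and one checks $m-1\ge 2$ and $m-1>n/2$ when $m\ge n/2+2$ — if instead $m=n/2+1$ (possible only for even $n$), replace $m$ by $m-2$, which still lies in $(n/2,n]$ only when $n\ge 7$, so the small cases $n=5,6$ should be verified by hand (or directly from the table in Appendix~\ref{sec:appendix}). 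Assuming $m-1>n/2$, Corollary~\ref{cor:n/2} yields $sc_{m-1}(n)=sc(n)$. (4) Conclude: $sc_{m-1}(n)=sc(n)>sc(n)-sc(n-m)=sc_m(n)<sc(n)=sc_{m+1}(n)$, so the sequence strictly decreases from index $m-1$ to $m$ and strictly increases from $m$ to $m+1$; in particular it is not monotonic.

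The main obstacle is purely bookkeeping at the boundary: ensuring that both even neighbors of the chosen odd $m$ genuinely fall in the regime $2t>n/2$ of Corollary~\ref{cor:n/2}, and that $m$ itself stays within $(n/2,n]$ so Corollary~\ref{cor:n/2odd} applies with $sc(n-m)>0$. This forces a short case analysis on $n \bmod 2$ and on the smallest values $n\in\{5,6,7\}$, where the interval $(n/2,n]$ is short enough that the odd integer one picks may sit adjacent to the boundary; those finitely many cases are handled by direct inspection of the tabulated values of $sc_t(n)$. Everything else is immediate from Corollaries~\ref{cor:n/2} and \ref{cor:n/2odd} together with the positivity $sc(0)=sc(1)=1$.
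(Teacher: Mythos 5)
Your argument is correct and is essentially the paper's own route: the paper deduces Corollary~\ref{cor:monot} directly from Corollaries~\ref{cor:n/2} and \ref{cor:n/2odd}, i.e., the strict dip $sc_m(n)=sc(n)-sc(n-m)<sc(n)$ at the largest odd $m\in(n/2,n]$ sandwiched between the even indices $m-1,m+1$ where the value is $sc(n)$. Your boundary caveat is unnecessary: with that choice of $m$ one has $n-m\in\{0,1\}$ and $m-1>n/2$ for every $n\geq 5$ ($m-1=n-1$ for odd $n$, $m-1=n-2$ for even $n\geq 6$), so no separate treatment of $n=5,6,7$ or table check is needed.
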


Corollary~\ref{cor:n/2odd} also implies that for $t$ satisfying $n/2<2t+1\leq n-2$,
\(sc_{2t+3}(n)-sc_{2t+1}(n)=sc(n-2t-3)-sc(n-2t-1)\).  Because $sc(n+2)>sc(n)$ for integers $n\geq 17$, we have the following corollary.

\begin{cor}
Let $n$ be a positive integer and suppose that $t$ satisfies $n/2<2t+1\leq n-17$.  Then \
\[sc_{2t+3}(n)>sc_{2t+1}(n).\]
\label{cor:largeOdd}
\end{cor}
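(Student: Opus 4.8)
The plan is to reduce this to the strict monotonicity of the sequence $\{sc(n)\}_{n\ge 0}$ recorded in the Remark following Lemma~\ref{lem:scn-4}, using the closed formula for $sc_{2t+1}(n)$ valid in the range $n/2<2t+1\le n$ supplied by Corollary~\ref{cor:n/2odd}. The key observation is that in that range $sc_{2t+1}(n)$ equals $sc(n)$ minus a single value of $sc$, so the difference of two such quantities for nearby core sizes collapses into a single difference of $sc$-values.

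First I would check that Corollary~\ref{cor:n/2odd} applies to the core sizes $2t+1$ and $2t+3$ simultaneously: from $2t+1>n/2$ one gets $2t+3>n/2$, and from $2t+1\le n-17$ one gets $2t+3\le n-15\le n$. Hence
\[
sc_{2t+1}(n)=sc(n)-sc(n-2t-1)
\qquad\text{and}\qquad
sc_{2t+3}(n)=sc(n)-sc(n-2t-3).
\]
Subtracting, the $sc(n)$ terms cancel and I am left with
\[
sc_{2t+3}(n)-sc_{2t+1}(n)=sc(n-2t-1)-sc(n-2t-3),
\]
so the claim becomes the single inequality $sc(m+2)>sc(m)$ with $m=n-2t-3$.

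To conclude I would invoke the Remark after Lemma~\ref{lem:scn-4}, which via Lemma~\ref{lem:scn-2} asserts that $sc(m+2)>sc(m)$ for every integer $m\ge 17$. The hypothesis $2t+1\le n-17$ forces $m=n-2t-3\ge 15$; for $m\ge 17$ the Remark applies directly, and the remaining small values of $m$ would be read off the explicit initial segment of $\{sc(n)\}_{n\ge 0}$ listed there. Combined with the previous display this gives $sc_{2t+3}(n)>sc_{2t+1}(n)$.

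The one delicate point, and the main obstacle, is this last step: the sequence $sc$ is strictly increasing in steps of two only from index $17$ onward, so one has to make sure the index $n-2t-3$ produced by the hypotheses truly sits in that regime. This is precisely why the constraint on $2t+1$ is two-sided: the lower bound $2t+1>n/2$ is what makes Corollary~\ref{cor:n/2odd} applicable to both core sizes, while the upper bound $2t+1\le n-17$ is what forces the index $n-2t-3$ to be large enough for the monotonicity of $sc$ to take over. The first two steps are otherwise purely formal once Corollary~\ref{cor:n/2odd} is available.
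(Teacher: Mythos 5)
Your reduction is exactly the paper's: the two sentences immediately preceding the corollary apply Corollary~\ref{cor:n/2odd} to both $2t+1$ and $2t+3$, cancel $sc(n)$, and invoke the remark that $sc(m+2)>sc(m)$ for integers $m\geq 17$. Your version even has the sign right, where the paper's displayed difference $sc(n-2t-3)-sc(n-2t-1)$ is a typo for $sc(n-2t-1)-sc(n-2t-3)$.

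However, the step you defer --- checking the ``remaining small values'' of $m=n-2t-3$ against the listed initial segment of $\{sc(n)\}$ --- does not go through, and this is a genuine gap rather than a formality. The hypothesis $2t+1\leq n-17$ only gives $m\geq 15$. For $m=15$ one indeed has $sc(17)=5>4=sc(15)$, but for $m=16$ one has $sc(18)=5=sc(16)$, so the difference $sc(n-2t-1)-sc(n-2t-3)$ vanishes. This case genuinely occurs inside the stated range: for odd $n\geq 37$ take $2t+1=n-18$, so that $n/2<2t+1\leq n-17$; then $sc_{2t+3}(n)-sc_{2t+1}(n)=sc(18)-sc(16)=0$. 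Concretely, $sc_{21}(37)=sc_{19}(37)=35-5=30$, in agreement with the appendix table (the entry $0$ in row $21-19$, column $37$ of the difference table), so the strict inequality claimed by the corollary fails there. Thus the statement as printed needs either the stronger hypothesis $2t+1\leq n-19$ (equivalently $n-2t-3\geq 17$, which is what the remark actually covers) or the weaker conclusion $sc_{2t+3}(n)\geq sc_{2t+1}(n)$, with equality exactly when $n-2t-1=18$. The paper's own one-line justification silently skips this boundary case, so your argument is faithful to the paper, but as written your final verification step would fail at $m=16$ and cannot be repaired without adjusting the statement.
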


We now establish a formula for $sc_{2t+1}(n)$ for values of $2t+1$ between $n/3$ and $n/2$. 

\begin{prop}
\label{prop:oddLarge}
Let $n$ be a positive integer and suppose $t$ is an integer satisfying $n/3<2t+1\leq n/2$.  Then 
\begin{equation} 
\label{eq:oddLarge}
sc_{2t+1}(n)= sc(n)-sc(n-2t-1)-(t-1)\,sc(n-4t-2).
\end{equation}
\end{prop}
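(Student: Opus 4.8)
The plan is to specialize Equation~\eqref{eq:OddSum} to the range $n/3<2t+1\le n/2$ and then evaluate the two surviving $sc_{2t+1}$‑values on the right-hand side using Corollary~\ref{cor:n/2odd} together with the trivial fact that a partition of $m$ has no $(2t+1)$‑hook once $m<2t+1$.

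First I would note that in this range $2\le n/(2t+1)<3$, so $\lfloor n/(2t+1)\rfloor=2$ and the sum in Equation~\eqref{eq:OddSum} runs only over pairs $(i,j)$ with $i,j\ge 0$ and $2i+j\in\{1,2\}$, i.e.\ over $(i,j)\in\{(0,1),(1,0),(0,2)\}$. Since $sc(2)=0$, the pair $(0,2)$ contributes nothing; using $\hat p_t(0)=1$, $\hat p_t(1)=t$ and $sc(0)=sc(1)=1$, the two remaining terms give
\[
sc_{2t+1}(n)=sc(n)-sc_{2t+1}(n-2t-1)-t\,sc_{2t+1}(n-4t-2).
\]
Next I would evaluate the two inner terms. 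For $sc_{2t+1}(n-4t-2)$: the lower bound $2t+1>n/3$ gives $n-4t-2=n-2(2t+1)<2t+1$ (and $\ge 0$ since $2t+1\le n/2$), so no partition of $n-4t-2$ can contain a hook of size $2t+1$, whence $sc_{2t+1}(n-4t-2)=sc(n-4t-2)$. For $sc_{2t+1}(n-2t-1)$: writing $m=n-2t-1$, the hypotheses $n/3<2t+1\le n/2$ are precisely $m/2<2t+1\le m$, so Corollary~\ref{cor:n/2odd} applies with $n$ replaced by $m$ and yields $sc_{2t+1}(m)=sc(m)-sc(m-2t-1)=sc(n-2t-1)-sc(n-4t-2)$.

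Substituting these into the displayed identity and collecting the coefficient of $sc(n-4t-2)$ gives
\[
sc_{2t+1}(n)=sc(n)-\big(sc(n-2t-1)-sc(n-4t-2)\big)-t\,sc(n-4t-2)=sc(n)-sc(n-2t-1)-(t-1)\,sc(n-4t-2),
\]
which is Equation~\eqref{eq:oddLarge}. The only thing requiring care is checking the boundary inequalities that license the two reductions—that $n-4t-2$ is a nonnegative integer strictly below $2t+1$, and that $m=n-2t-1$ lies in the range $m/2<2t+1\le m$ where Corollary~\ref{cor:n/2odd} is valid—but both follow at once from rewriting $n/3<2t+1$ and $2t+1\le n/2$ as $n<3(2t+1)$ and $2(2t+1)\le n$. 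I do not anticipate a genuine obstacle here; the entire content is the reduction to the already-established Corollary~\ref{cor:n/2odd}.
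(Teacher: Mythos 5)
Your proposal is correct and follows essentially the same route as the paper: specialize Equation~\eqref{eq:OddSum} to this range (only the $(i,j)=(0,1)$ and $(1,0)$ terms survive), then evaluate $sc_{2t+1}(n-2t-1)$ via Corollary~\ref{cor:n/2odd} and note $sc_{2t+1}(n-4t-2)=sc(n-4t-2)$ since $n-4t-2<2t+1$. Your explicit remarks that $\lfloor n/(2t+1)\rfloor=2$ and that the $(0,2)$ term dies because $sc(2)=0$ are just slightly more detailed versions of what the paper states.
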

\begin{proof}
When $n/3<2t+1\leq n/2$, the sum in Equation~\eqref{eq:OddSum} is the sum of only two non-zero terms,
\[sc_{2t+1}(n-2t-1)\,\hat{p}_t(0)\,sc(1)+t\,sc_{2t+1}(n-4t-2)\,\hat{p}_t(1)\,sc(0),\]
which simplifies to $sc_{2t+1}(n-2t-1)+t\,sc_{2t+1}(n-4t-2)$. We remark that $2t+1>(n-4t-2)/2$ and $(n-2t-1)/2<2t+1\leq (n-2t-1)$, so Corollary~\ref{cor:n/2odd} implies that 
\[|\bar{SC}_{2t+1}(n)|=\big[sc(n-2t-1)-sc(n-4t-2)\big]+t\,sc(n-4t-2),\]
from which Equation~\eqref{eq:oddLarge} follows.
\end{proof}

\begin{remark}
Formulas like those in Corollary~\ref{cor:2n/4} can be found now for odd cores.  For example, when $n$ be an integer greater than or equal to $76$, then
\begin{equation*} 
sc_{2\lfloor n/4\rfloor-11}(n)=\left.\begin{cases} 
sc(n)-sc(n-2\lfloor n/4\rfloor+11)-8(\lfloor n/4\rfloor-7)& \textup{when $n\equiv 0\mod 4$} \\ 
sc(n)-sc(n-2\lfloor n/4\rfloor+11)-9(\lfloor n/4\rfloor-7)& \textup{when $n\equiv 1\mod 4$} \\ 
sc(n)-sc(n-2\lfloor n/4\rfloor+11)-11(\lfloor n/4\rfloor-7)& \textup{when $n\equiv 2\mod 4$} \\
sc(n)-sc(n-2\lfloor n/4\rfloor+11)-12(\lfloor n/4\rfloor-7)& \textup{when $n\equiv 3\mod 4$}\end{cases}
\right\}.
\end{equation*}
\end{remark}

As in the even core case, we can use Equation~\eqref{eq:OddSum} to find a formula for $sc_{2t+1}(n)$ involving polynomials of $t$ and values of $sc(m)$ for $m\leq n$.
\begin{thm}
\label{thm:sc2t+1}
We have the following formula for $sc_{2t+1}(n)$.
\[sc_{2t+1}(n)=\sum_{\substack{I=(i_1,\hdots,i_k)\\J=(j_1,\hdots,j_k)\\2|I|+|J|\leq\lfloor \frac{n}{2t}\rfloor}} (-1)^k \hat{p}_t(i_1)\cdots\hat{p}_t(i_k)\,sc(j_1)\cdots sc(j_k)\,sc\big(n-(2|I|+|J|)(2t+1)\big),\]
where the sum is over all pairs of sequences of non-negative integers $I=(i_1,\hdots,i_k)$ and $J=(j_1,\hdots,j_k)$ such that $i_l+j_l\geq 1$ for all $1\leq l\leq k$ and their sums satisfy $2|I|+|J|\leq\lfloor\frac{n}{2t+1}\rfloor$.
\end{thm}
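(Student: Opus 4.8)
The plan is to iterate Equation~\eqref{eq:OddSum} from the base case $sc_{2t+1}(n)=sc(n)$ when $2t+1>n$, exactly as Theorem~\ref{thm:sc2t} iterates Equation~\eqref{eq:EvenSum}. The right-hand side of \eqref{eq:OddSum} expresses $sc_{2t+1}(n)$ in terms of values $sc_{2t+1}\big(n-(2i+j)(2t+1)\big)$ with $2i+j\geq 1$; substituting the analogous expansion for each of those smaller values and repeating until the argument of $sc_{2t+1}$ drops below $2t+1$ (at which point $sc_{2t+1}(m)=sc(m)$) should produce the claimed closed form. The combinatorial content is bookkeeping: at each of $k$ stages we choose a pair $(i_\ell,j_\ell)$ of non-negative integers, not both zero, contributing a factor $\hat p_t(i_\ell)$ from the off-diagonal quotient choices and a factor $sc(j_\ell)$ from the self-conjugate partition recorded in the middle coordinate $\lambda_{(t)}$, and the total number of $(2t+1)$-hooks removed is $\sum_\ell(2i_\ell+j_\ell)=2|I|+|J|$, so the surviving core is a self-conjugate partition of $n-(2|I|+|J|)(2t+1)$. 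The sign $(-1)^k$ comes from the leading minus sign on the sum in \eqref{eq:OddSum} appearing once per stage.

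First I would set up the induction on $\lfloor n/(2t+1)\rfloor$, or equivalently downward induction on the argument of $sc_{2t+1}$. The base case is Corollary-free: if $2t+1>n$ then the sum in \eqref{eq:OddSum} is empty, so $sc_{2t+1}(n)=sc(n)$, which is the $k=0$ (empty sequences $I=J=()$) term of the claimed formula, and all other terms vanish since there is no way to have $2|I|+|J|\geq 1$ with $2|I|+|J|\leq\lfloor n/(2t+1)\rfloor=0$. For the inductive step, I would apply \eqref{eq:OddSum} once to peel off a first pair $(i_1,j_1)$ with $i_1+j_1\geq 1$ and $1\leq 2i_1+j_1\leq\lfloor n/(2t+1)\rfloor$, leaving $sc_{2t+1}(n')$ with $n'=n-(2i_1+j_1)(2t+1)$; then apply the inductive hypothesis to each such $sc_{2t+1}(n')$, writing its expansion over pairs of sequences $(I',J')$ with $2|I'|+|J'|\leq\lfloor n'/(2t+1)\rfloor$; and finally reindex by prepending $(i_1,j_1)$ to $(I',J')$. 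The key identity to check is that the range constraint matches: $2|I'|+|J'|\leq\lfloor n'/(2t+1)\rfloor$ together with the first-stage constraint is equivalent to $2|I|+|J|\leq\lfloor n/(2t+1)\rfloor$ for the concatenated sequences, which follows because $n'/(2t+1)=n/(2t+1)-(2i_1+j_1)$ and the subtracted quantity is an integer, so the floor behaves additively.

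The main obstacle I anticipate is a subtle double-counting or mismatch at the boundary of the summation ranges, specifically reconciling $\lfloor n/(2t+1)\rfloor$ with $\lfloor n/(2t)\rfloor$ (the displayed formula in Theorem~\ref{thm:sc2t+1} curiously writes $\lfloor n/(2t)\rfloor$ under the sum sign but $\lfloor n/(2t+1)\rfloor$ in the surrounding text, and I would need to verify that the extra terms with $\lfloor n/(2t+1)\rfloor<2|I|+|J|\leq\lfloor n/(2t)\rfloor$ are all zero---they are, because the final factor $sc\big(n-(2|I|+|J|)(2t+1)\big)$ has a negative argument, hence vanishes, so either bound yields the same sum). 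A secondary point requiring care is the note in the proof of the earlier theorem that $j$ is never $2$ because $sc(2)=0$; this is automatically handled since $sc(2)=0$ makes any term with some $j_\ell=2$ drop out, so I do not need a separate exclusion. Aside from these bookkeeping checks, the argument is a routine unrolling of the recursion parallel to the proof of Theorem~\ref{thm:sc2t}, and I would present it as such, emphasizing the concatenation map on pairs of sequences as the combinatorial heart of the induction.
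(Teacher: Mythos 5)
Your proposal is correct and follows essentially the same route the paper intends: the theorem is obtained by iterating Equation~\eqref{eq:OddSum} from the base case $sc_{2t+1}(n)=sc(n)$ for $2t+1>n$, just as Theorem~\ref{thm:sc2t} iterates Equation~\eqref{eq:EvenSum}, and your induction with the concatenation/reindexing of pairs $(i_\ell,j_\ell)$ is exactly that unrolling. Your side remarks are also sound: the bound written as $\lfloor n/(2t)\rfloor$ under the summation is a typo for $\lfloor n/(2t+1)\rfloor$, harmless since the extra terms have $sc$ evaluated at a negative argument and vanish, and terms with some $j_\ell=2$ drop out automatically because $sc(2)=0$.
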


We now prove Theorem~\ref{thm:oddPC}.
\begin{proof}[Proof of Theorem~\ref{thm:oddPC}]
Alongside Corollary~\ref{cor:largeOdd} it remains to establish that $sc_{2t+3}(n)>sc_{2t+1}(n)$ for $n/3<2t+1\leq n/2$. 

When $n/2-2<2t+1\leq n/2$ and $n\geq 34$, then $n-2t-3>n/2\geq 17$, so $sc(n-2t-1)>sc(n-2t-3)$ and we have
\[sc_{2t+3}(n)=sc(n)-sc(n-2t-3)>sc(n)-sc(n-2t-1)-(t-1)\,sc(n-4t-2)=sc_{2t+1}(n).\]

When $2t+1\leq n/2-2$, Proposition \ref{prop:oddLarge} implies we need to prove
\begin{equation}
t\,sc(n-4t-6)+sc(n-2t-3)<(t-1)\,sc(n-4t-2)+sc(n-2t-1).
\label{eq:simpleSC}
\end{equation}

When $n/2-4<2t+1\leq n/2-2$, then $4\leq n-4t-2\leq 7$ and $0\leq n-4t-6\leq 3$.  If $n-4t-2=6$, then Equation~\eqref{eq:simpleSC} is $sc(n-2t-3)<(t-1)+sc(n-2t-1)$, which is certainly true when $sc(n-2t-1)>sc(n-2t-3)$.  Otherwise, $sc(n-4t-2)=sc(n-4t-6)=1$, so Equation~\eqref{eq:simpleSC} becomes $1+sc(n-2t-3)<sc(n-2t-1)$, which is true when $n-2t-1>26$; for the given range of $2t+1$, this requires $n>48$.  (This result also holds for $n=48$ and $t=21$.)

When $n/3<2t+1\leq n/2-4$, we will prove Equation~\eqref{eq:simpleSC} by proving that $t\,sc(n-4t-6)<(t-1)\,sc(n-4t-2)$ and relying on the fact that $sc(n-2t-1)\geq sc(n-2t-3)$ for $n$ and $t$ in our range. 
Since $n/2-4\geq 2t+1$, then $n-4t-2\geq 8$, so Lemma~\ref{lem:scn-4} applies to give $\frac{sc(n-4t-6)}{sc(n-4t-2)}< \frac{n-4t-2}{n-4t+2}$.  When $n>18$, then $(n+6)/4<n/3$, which in turn is less than $2t+1$.  Therefore $n+2<8t$, so $(n-4t-2)t<(n-4t+2)(t+1)$, implying $\frac{sc(n-4t-6)}{sc(n-4t-2)}\leq \frac{n-4t-2}{n-4t+2}<\frac{t-1}{t}$, from which $t\,sc(n-4t-6)<(t-1)\,sc(n-4t-2)$, as desired.  
\end{proof}

\begin{remark}
The lower bound of $n=48$ in Theorem~\ref{thm:oddPC} is necessary---from Proposition~\ref{prop:oddLarge}, we have $sc_{23}(47)=sc_{21}(47)$, $sc_{21}(45)=sc_{19}(45)$, $sc_{21}(42)=sc_{19}(42)$, $sc_{19}(39)=sc_{17}(39)$,  and $sc_{17}(37)=sc_{15}(37)$.  There are other anomalies in for other values of $n\leq 41$ and $t\geq 11$: we have $sc_{13}(34)=sc_{11}(34)$, $sc_{15}(39)=sc_{13}(39)$, $sc_{13}(41)=sc_{11}(41)$. Also of note are the two cases $sc_{13}(29)<sc_{11}(29)$ and $sc_{15}(31)<sc_{13}(31)$.
\end{remark}

\subsection{Positivity and monotonicity in small $(2t+1)$-cores}\

Robbins \cite[Theorem 7]{Robbins} and Baruah and Berndt \cite[Theorem 5.2]{BB} prove that the only integers having at least one self-conjugate $3$-core (in fact, there is exactly one) are of the form $3d^2+2d$ or $3d^2-2d$ for some non-negative integer $d$. 

Garvan, Kim, and Stanton \cite{Garvan} characterize the integers having no self-conjugate $5$-core: $sc_5(n)=0$ if and only if the prime factorization of $n$ contains a prime of the form $4k+3$ to an odd power.  In addition, they cite an observation of Doug McDoniel involving representations of integers as sums of three squares that proves that $sc_7(n)=0$ if and only if $n=(8m+1)4^k-2$ for integers $m$ and $k$.

Baldwin et al.\ \cite{Baldwin} prove that $sc_9(n)=0$ for all $n$ of the form $n=(4^k-10)/3$ and cite a communication with Peter Montgomery which proves that this is a complete characterization of integers having no self-conjugate $9$-core partitions.

In the odd monotonicity conjecture, we give the lower bound $2t+1$ equals $9$.  Unlike in the even case, it appears that $sc_9(n)<sc_7(n)$ for infinitely many values of $n$; the non-negative values of $n$ up to $10000$ for which $sc_9(n)<sc_7(n)$ are  
\begin{align*}
\{ &9, 18, 21, 82, 114, 146, 178, 210, 338, 402, 466, 594, 658, 722, 786, 850, 978, \\ & 1106, 1362, 1426,  1618, 1746, 1874, 2130, 2386, 2514, 2642, 2770, 2898,  3154, 3282, \\ & 3410, 3666, 3922, 4050,  4178, 4306, 4434, 4690, 4818, 4946, 5202, 5458, 5586, 5970, \\ & 6226, 6482, 6738, 6994, 7250, 7506, 8018, 8274, 8530, 8786, 9042, 9298, 9554, 9810\}.
\end{align*}
Note that these include many (but not all) values of $n\equiv 82\mod 128$; this condition is neither necessary nor sufficient. 

\begin{conj}
There are infinitely many positive integers $n$ such that $sc_9(n)<sc_7(n)$.
\label{conj:9core}
\end{conj}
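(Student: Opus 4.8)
The plan is to produce an explicit infinite family of integers $n$ on which $sc_9(n)=0$ but $sc_7(n)>0$; for such $n$ the strict inequality $sc_9(n)<sc_7(n)$ is immediate, so the conjecture follows.  Both facts we need are already recorded above: by Baldwin et al.\ \cite{Baldwin} one has $sc_9(n)=0$ whenever $n=(4^k-10)/3$, while by McDoniel's observation cited in \cite{Garvan} one has $sc_7(n)=0$ if and only if $n=(8m+1)4^j-2$ for some integers $m,j\geq 0$.  So it suffices to show that for all large $k$ the integer $n_k:=(4^k-10)/3$ is \emph{not} of the shape $(8m+1)4^j-2$; equivalently, that the (thin) zero-set of $sc_9$ is not eventually contained in the zero-set of $sc_7$.

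The computation I would then carry out is the following.  Since $4\equiv1\pmod 3$, the number $n_k$ is a positive integer for every $k\geq2$, and
\[
n_k+2=\frac{4^k-4}{3}=4\cdot\frac{4^{k-1}-1}{3}=4\sum_{i=0}^{k-2}4^i .
\]
Thus the odd part of $n_k+2$ is $\sum_{i=0}^{k-2}4^i$, so $\nu_2(n_k+2)=2$; moreover, reducing this odd part modulo $8$ kills every term with $i\geq2$ and leaves $1+4\equiv5\pmod 8$ as soon as $k\geq3$.  Now suppose $n_k+2=(8m+1)4^j$.  Comparing $2$-adic valuations and using $\nu_2(8m+1)=0$ forces $j=1$, and then $8m+1=(n_k+2)/4=\sum_{i=0}^{k-2}4^i\equiv5\pmod 8$, contradicting $8m+1\equiv1\pmod 8$.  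Hence $sc_7(n_k)\geq1$ for every $k\geq3$, while $sc_9(n_k)=0$, and since $n_k$ is strictly increasing in $k$ this produces infinitely many $n$ with $sc_9(n)<sc_7(n)$.  (Sanity check at $k=2$: there $n_2=2$, the sum is $1$, so $n_2+2=4=(8\cdot0+1)4^1$, consistent with $sc_7(2)=sc_9(2)=0$.)

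There is essentially no hard step once the two published characterizations of the zero-sets of $sc_7$ and $sc_9$ are taken as given — the argument collapses to the one-line $2$-adic check above, whose only content is ruling out the coincidence that the $sc_9$-zero family $\{(4^k-10)/3\}$ lies inside the $sc_7$-zero family.  A genuinely self-contained proof avoiding Montgomery's and McDoniel's results would be harder: one would instead have to exhibit directly a self-conjugate $7$-core of each size $n_k$ (say via the abacus or the sum-of-three-squares description) or compare the generating functions in \eqref{eq:scgf} at $t=7$ and $t=9$; the arithmetic route above seems strictly easier.  As a bonus, the family above explains the observation made just before Conjecture~\ref{conj:9core} that many listed values satisfy $n\equiv82\pmod{128}$: since $n_k-n_{k-1}=4^{k-1}$, which is divisible by $128$ once $k\geq5$, the sequence $n_k$ is constant modulo $128$ from $n_4=82$ onward, accounting for the abundance of such values, while the remaining listed values — not drawn from any single such family — show why the congruence is neither necessary nor sufficient.
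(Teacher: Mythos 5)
Your argument is correct, but there is nothing in the paper to compare it against: the statement is offered there purely as a conjecture, supported only by the list of $n\le 10000$ with $sc_9(n)<sc_7(n)$ and the empirical remark about $n\equiv 82\pmod{128}$. What you do is combine two characterizations the paper itself quotes as known---Baldwin et al.'s result that $sc_9(n)=0$ whenever $n=(4^k-10)/3$, and McDoniel's characterization $sc_7(n)=0$ if and only if $n=(8m+1)4^j-2$---with a short $2$-adic check, and that check is sound: for $n_k=(4^k-10)/3$ one has $n_k+2=4\sum_{i=0}^{k-2}4^i$, whose $2$-adic valuation is $2$ and whose odd part is $\equiv 5\pmod 8$ once $k\ge 3$, so $n_k+2$ cannot equal $(8m+1)4^j$ (odd part $\equiv 1\pmod 8$); hence $sc_7(n_k)\ge 1$ while $sc_9(n_k)=0$ for all $k\ge 3$, giving infinitely many such $n$. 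So, granting the two cited results (only the unconditional direction of Baldwin et al.\ is needed, but the full ``$sc_7(n)=0$ implies $n=(8m+1)4^j-2$'' direction of McDoniel is essential), the conjecture as literally stated is in fact a theorem; what your family does not touch is the apparently richer phenomenon visible in the data, namely the many listed $n$ (e.g.\ $9$, $21$, $114$, $210$) at which $sc_7$ and $sc_9$ are both positive yet $sc_9(n)<sc_7(n)$, whose structure remains open. One small caveat: your closing remark that the family $\{(4^k-10)/3\}$ ``accounts for the abundance'' of listed values $\equiv 82\pmod{128}$ overstates matters, since below $10^4$ it contributes only $82$, $338$, $1362$, $5458$, whereas the list contains many other values in that residue class ($210$, $466$, $594$, \dots) not of this form; this side comment does not affect the correctness of the proof.
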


The choice of the lower bound $n\geq 56$ in the odd monotonicity conjecture was chosen because $sc_{11}(n)=sc_{9}(n)$ when $n\in\{0, 1, 2, 3, 4, 5, 6, 7, 8, 12, 14, 15, 16, 20, 22, 27, 31, 32, 35, 55\}$ and $sc_{11}(n)<sc_{9}(n)$ when $n$ equals 11 or 23. That these are the only values satisfying $sc_{11}(n)\leq sc_{9}(n)$ has been verified for all non-negative $n\leq 10000$.

\section{Future directions}
\label{sec:other}

In addition to the conjectures stated above, we have assembled multiple avenues for future exploration.

\subsection{Non-self-conjugate $t$-core partitions}\

To find a stronger monotonicity result for defect zero blocks of the alternating group, one would need to understand non-self-conjugate $t$-core partitions $nsc_t(n)$ as well.  Defect zero $t$-blocks arise in two ways.  The ones from $S_n$ that split upon restriction are counted by $2sc_t(n)$ and those from $S_n$ that do not split upon restriction are counted by $\frac{1}{2}nsc_t(n)$.  Experimentally, $nsc_{2t+3}(n)>nsc_{2t+1}(n)$ for $5\leq 2t+3\leq n\leq 500$, so we conjecture the following.

\begin{conj} Suppose $p,q$ are odd primes such that $9<p,q< n-17$. The number of defect zero $p$-blocks of $A_n$ is strictly less than the number of defect zero $q$-blocks of $A_n$.
\end{conj}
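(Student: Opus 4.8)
The plan is to reduce the statement to the monotonicity conjectures already under discussion and, in the overlap of the ranges where those are known, to read off an unconditional partial result. First I would rewrite the quantity being compared. Write $D_t(n)$ for the number of defect zero $t$-blocks of $A_n$. By the dichotomy recalled above — splitting $S_n$-blocks contribute $2\,sc_t(n)$, non-splitting ones contribute $\tfrac12\,nsc_t(n)$, and $nsc_t(n)=c_t(n)-sc_t(n)$ — we get
\[
D_t(n)=2\,sc_t(n)+\tfrac12\bigl(c_t(n)-sc_t(n)\bigr)=\tfrac32\,sc_t(n)+\tfrac12\,c_t(n).
\]
Hence $D_p(n)<D_q(n)$ is equivalent to $3\,sc_p(n)+c_p(n)<3\,sc_q(n)+c_q(n)$, and it suffices to establish $c_p(n)\le c_q(n)$ together with $sc_p(n)<sc_q(n)$ for odd primes $9<p<q<n-17$.

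Second, I would obtain each inequality by telescoping its consecutive-step form over \emph{all} integers in $[p,q]$, not just the primes. For the $t$-core factor, Stanton's Conjecture~\ref{conj:Stanton} asserts $c_m(n)\le c_{m+1}(n)$ whenever $4\le m\le n-1$; since $p\ge 11>4$ and $q-1<q<n-17<n-1$, chaining over $m=p,\dots,q-1$ gives $c_p(n)\le c_q(n)$. For the self-conjugate factor, the Odd Monotonicity Conjecture~\ref{conj:odd} asserts $sc_m(n)<sc_{m+2}(n)$ for odd $9\le m\le n-17$; chaining over the odd integers from $p$ to $q$ (there is at least one step, since $q\ge p+2$) gives $sc_p(n)<sc_q(n)$. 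Adding the first inequality to three times the second yields the claim. In other words, the conjecture is precisely the conjunction of Stanton's conjecture on the interval $[11,n-17]$ and the Odd Monotonicity Conjecture.

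Third, for an unconditional partial theorem I would intersect the ranges in which the two telescopes are actual theorems. Craven's theorem \cite{Craven} supplies $c_m(n)<c_{m+1}(n)$ for $n/2<m<n-1$, and Theorem~\ref{thm:oddPC} supplies $sc_m(n)<sc_{m+2}(n)$ for odd $m$ with $n/3<m\le n-17$ (for $n\ge 48$, subsuming Corollary~\ref{cor:largeOdd}). So for $n\ge 48$ and odd primes $p,q$ with $n/2<p<q<n-17$, both chains go through verbatim and $D_p(n)<D_q(n)$. One can widen the window by invoking the Kim--Rouse range $4\le m\le 198$ \cite{KimRouse} for the $t$-core step and Anderson's asymptotic result \cite{Anderson} for fixed $m$, at the cost of a more case-ridden hypothesis.

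The main obstacle is the one that blocks the other conjectures in the paper: the $t$-core step $c_m(n)\le c_{m+1}(n)$ \emph{is} Stanton's conjecture, open for $m$ in the bulk of $(198,\,n/2)$, and the self-conjugate step is open for odd $m$ below $n/3$. Closing the gap along this route would require a new handle on $c_m(n)$ — equivalently on $nsc_m(n)=c_m(n)-sc_m(n)$ — in the intermediate range, presumably via large-$m$ formulas for $nsc_m(n)$ analogous to Theorems~\ref{thm:sc2t} and~\ref{thm:sc2t+1} combined with a growth bound on the relevant counting function in the spirit of Lemmas~\ref{lem:scn-2} and~\ref{lem:scn-4}.
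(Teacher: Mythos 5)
The statement you are proving is itself one of the paper's conjectures (posed in Section~\ref{sec:other}); the paper gives no proof of it. Its stated evidence is the experimental observation that $nsc_{2t+3}(n)>nsc_{2t+1}(n)$ for $5\leq 2t+3\leq n\leq 500$, combined with the known and conjectured monotonicity of $sc_t(n)$, i.e.\ the paper's implicit decomposition is $D_t(n)=2\,sc_t(n)+\tfrac12\,nsc_t(n)$ with monotonicity conjectured separately for each summand. Your rewrite $D_t(n)=\tfrac32\,sc_t(n)+\tfrac12\,c_t(n)$ is arithmetically equivalent but routes the second summand through Stanton's conjecture instead of through $nsc$-monotonicity; these are genuinely different conditional statements (Stanton's step-one inequality for $c_t$ neither follows from nor implies the step-two inequality for $nsc_t$ among odd $t$), and your version demands the full strength of Stanton's conjecture in the bulk range $198<m<n/2$, which is exactly where it is open. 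So, as a proof of the conjecture, your argument has an essential gap: everything below $p>n/2$ (beyond the Kim--Rouse window) is conditional on two open conjectures, which you acknowledge, but which means the statement is not established.

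Two smaller points. First, your parenthetical claim that the conjecture ``is precisely the conjunction'' of Stanton's conjecture and Conjecture~\ref{conj:odd} is too strong: for instance $n=34$, $p=11$, $q=13$ satisfies $9<p,q<n-17$ but lies outside the range $n\geq 56$ of Conjecture~\ref{conj:odd}, and there $sc_{13}(34)=sc_{11}(34)$, so the required strict inequality cannot come from the $sc$-chain and is not supplied by Stanton's conjecture (which only gives $\geq$); it would have to come from strict growth of $c_t$ or of $nsc_t$, which is the paper's route. Conversely the block conjecture certainly does not imply Stanton's conjecture, so the two sides are not equivalent. Second, your unconditional partial result is correct and worth keeping: for $n\geq 48$ and odd primes $n/2<p<q<n-17$, Craven's theorem gives $c_p(n)<c_q(n)$ by telescoping, Theorem~\ref{thm:oddPC} (or Corollary~\ref{cor:largeOdd}) gives $sc_p(n)<sc_q(n)$, and the identity above yields $D_p(n)<D_q(n)$; this strengthens, in that restricted range, the splitting-block comparison the paper derives from Theorem~\ref{thm:oddPC}, precisely because it also controls the non-splitting blocks via $c_t$ rather than leaving $nsc_t$ untouched.
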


\subsection{Asymptotics and unimodality in self-conjugate core partitions}\

A deeper question than the monotonicity of $sc_{t+2}(n)>sc_{t}(n)$ has to do with the distribution of $sc_{t+2}(n)-sc_t(n)$ for a fixed $n$, and as $n$ goes to infinity.  

Define the functions $\pi_t(n)=(c_{t+1}(n)-c_{t}(n))/p(n)$ and $\sigma_t(n)=(sc_{t+2}(n)-sc_{t}(n))/sc(n)$ which are the normalized net increase in the number of partitions of $n$ that are $(t+1)$-cores and not $t$-cores and the normalized net increase in the number of self-conjugate core partitions of $n$ that are $(t+2)$-cores and not $t$-cores.  For fixed $n$, we can see that 
\[\sum_{t=1}^\infty \pi_t(n)=\sum_{t'=0}^{\infty} \sigma_{2t'}(n)=\sum_{t'=0}^{\infty} \sigma_{2t'+1}(n)=1.\] 
Plotting the functions (of $t$), $\pi_t(n)$, $\sigma_{2t}(n)$, and $\sigma_{2t+1}(n)$, for fixed values of $n$ between $100$ and $400$ gives the graphs in Figure~\ref{fig:distr}.    
\begin{figure}%
\hspace{-.3in}\epsfig{figure=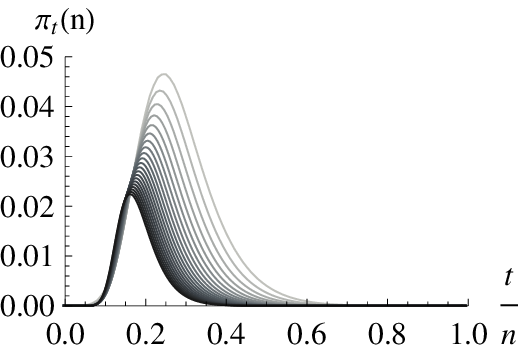,height=1.4in}~%
\epsfig{figure=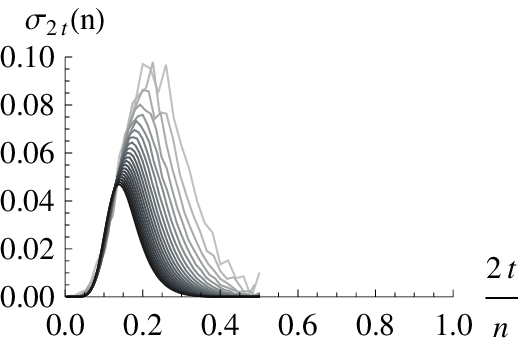,height=1.4in}~%
\epsfig{figure=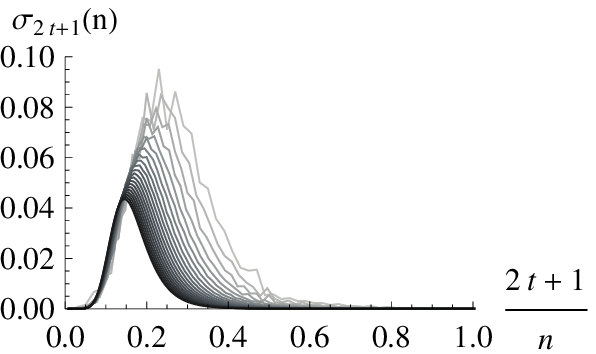,height=1.4in}\!\!\!\!\!\!\!\!%
\caption{Graphs of $\pi_t(n)$, $\sigma_{2t}(n)$, and $\sigma_{2t+1}(n)$ for values of $n$ between $100$ and $400$ (colored from light to dark).}
\label{fig:distr}
\end{figure}

In \cite{Craven}, Craven proves the following theorem.
\begin{thrm}[Craven]
Suppose that $0<q<1$ is a real number. Then as $n$ tends to infinity, $\displaystyle \frac{c_{\lfloor qn\rfloor}(n)}{p(n)}\rightarrow 1$.
\end{thrm}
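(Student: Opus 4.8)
The plan is to compare $p(n)$ with $c_t(n)$ via the complementary count: $p(n) - c_t(n)$ is the number of partitions of $n$ that are \emph{not} $t$-cores, i.e. partitions containing at least one hook of length $t$. Every such partition has a $t$-core $\lambda^0 \vdash n_0$ with $n_0 < n$, and a nonempty $t$-quotient. Using Proposition~3.6 of \cite{Olsson1} (the $n = n_0 + t\sum_k |\lambda_{(k)}|$ decomposition), a partition of $n$ that is not a $t$-core is determined by a triple: a $t$-core $\mu$ of some size $n_0 = n - t m$ with $m \geq 1$, together with a sequence of $t$ partitions $(\lambda_{(0)},\dots,\lambda_{(t-1)})$ with $\sum_k|\lambda_{(k)}| = m$. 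Hence
\[
p(n) - c_t(n) \;=\; \sum_{m \geq 1} c_t(n - tm)\, \hat p_t(m),
\]
where $\hat p_t(m)$ counts length-$t$ sequences of partitions with total size $m$ (same notation as in the excerpt). Since $c_t(k) \leq p(k) \leq p(n)$, the plan is to bound the right-hand side by $p(n) \sum_{m\geq 1} \hat p_t(m) \, \mathbf{1}[tm \leq n]$, so it suffices to show
\[
\frac{1}{p(n)}\sum_{1 \leq m \leq n/t} \hat p_t(m)\, p(n - tm) \longrightarrow 0
\]
when $t = \lfloor qn \rfloor$; equivalently $c_t(n)/p(n) \to 1$.

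The key point is that when $t = \lfloor qn\rfloor$ the summation index $m$ ranges only over $1 \leq m \leq n/t \approx 1/q$, a \emph{bounded} number of terms, so it is enough to control each term $\hat p_t(m) \, p(n-tm)/p(n)$ individually for fixed $m = 1, 2, \dots, \lfloor 1/q \rfloor$. For fixed $m$, the factor $\hat p_t(m)$ is a polynomial in $t$ of degree $m$ (the number of ways to distribute a partition of total size $m$ among $t$ slots is $O(t^m)$), hence $\hat p_t(m) = O(n^m)$. Meanwhile $p(n - tm) = p(n - \lfloor qn\rfloor m) = p(n(1 - qm) + O(1))$, and since $1 - qm < 1$ strictly, the Hardy–Ramanujan asymptotic $p(N) \sim \frac{1}{4N\sqrt 3}\exp(\pi\sqrt{2N/3})$ gives $p(n - tm)/p(n) \leq \exp\!\big(-\pi\sqrt{2n/3}(1 - \sqrt{1 - qm}) + o(\sqrt n)\big)$, which decays faster than any polynomial in $n$. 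Therefore each of the boundedly many terms $\hat p_t(m)\, p(n-tm)/p(n)$ tends to $0$, and the finite sum does too. (Terms with $1 - qm \leq 0$, i.e. $m \geq 1/q$, contribute $p$ of a bounded or nonpositive argument, hence $O(1)/p(n) \to 0$ as well.) This yields $c_t(n)/p(n) \to 1$.

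The main obstacle is making the ``$\hat p_t(m) = O(n^m)$ versus subexponential decay of $p(n-tm)/p(n)$'' comparison fully rigorous and uniform: one must verify that $\sqrt{n} \gg \log n$ dominates, which is immediate, but one must also handle the borderline values of $m$ near $1/q$ (where $1 - qm$ is close to $0$ and the exponential gain is weak) and, if $q$ is rational with $1/q$ an integer, the case $1 - qm = 0$ exactly, where the term is $\hat p_t(1/q)\, p(O(1))/p(n) = O(n^{1/q})/p(n) \to 0$ since $p(n)$ grows subexponentially but still faster than any polynomial. Packaging all of this cleanly — rather than term-by-term — is most naturally done by invoking the crude bound $p(N-1)/p(N) \leq 1 - c/\sqrt N$ for a constant $c > 0$ (a standard consequence of Hardy–Ramanujan, and the analogue of Lemma~\ref{lem:scn-2} in the self-conjugate setting), iterating it $tm \asymp qmn$ times to get $p(n-tm)/p(n) \leq (1 - c/\sqrt n)^{\,\lfloor qmn\rfloor}$, and then checking this beats the polynomial factor $\hat p_t(m)$ after summing the bounded number of terms.
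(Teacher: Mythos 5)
This statement is quoted in the paper as Craven's theorem and cited to \cite{Craven} without an internal proof, so there is nothing in the paper itself to compare against; your argument has to stand on its own, and it does. The identity $p(n)-c_t(n)=\sum_{m\ge 1} c_t(n-tm)\,\hat p_t(m)$ is a correct consequence of the core--quotient bijection; with $t=\lfloor qn\rfloor$ the index $m$ runs over a bounded set ($m\le n/t\to 1/q$); for each fixed $m$, $\hat p_t(m)$ is the coefficient of $x^m$ in $\prod_{k\ge1}(1-x^k)^{-t}$, a polynomial in $t$ of degree $m$, hence $O(n^m)$; and for each fixed integer $m$ either $1-qm>0$, where Hardy--Ramanujan gives $p(n-tm)/p(n)\le\exp\bigl(-c_m\sqrt n\,(1+o(1))\bigr)$, or $qm\ge 1$, where $n-tm\le m$ is bounded so the term is $O(n^m)/p(n)$. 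Every term is $o(1)$ and there are boundedly many, so $c_t(n)/p(n)\to 1$ (the ratio never exceeds $1$ since $c_t(n)\le p(n)$). Your concern about ``borderline'' values of $m$ is moot: $m$ ranges over finitely many integers, so $1-qm$ is a fixed constant for each relevant $m$, and the exact case $qm=1$ is handled as you indicate. This is the same circle of ideas as Craven's own treatment --- core/quotient decomposition combined with ratio bounds for partition numbers --- which is precisely the technique the present paper adapts to self-conjugate partitions in Lemmas~\ref{lem:scn-2} and \ref{lem:scn-4}. One small caveat on your closing remark: the crude bound $p(N-1)/p(N)\le 1-c/\sqrt N$ fails at $N=1$ (where the ratio equals $1$), so state it for $N\ge 2$ or adjust the constant; this does not affect the argument, and the term-by-term version you give first is already sufficient.
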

As a consequence, as $n$ goes to infinity, $\pi_t(n)$ approaches the function that is identically zero.  This is seen in Figure~\ref{fig:distr}(a) by noticing that the function values in the sequence of curves at a fixed value on the $x$-axis eventually decreases to zero.  This appears to be true for self-conjugate partitions as well.

\begin{conj}
Suppose that $0<q<1$ is a real number. Then as $n$ tends to infinity, $\displaystyle \frac{sc_{\lfloor qn\rfloor}(n)}{sc(n)}\rightarrow 1$.
\end{conj}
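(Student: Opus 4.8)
The plan is to show that the proportion of self-conjugate partitions of $n$ that are \emph{not} $\lfloor qn\rfloor$-cores tends to $0$. Since $SC_t(n)\subseteq SC(n)$ already gives $sc_t(n)\le sc(n)$, writing $t=\lfloor qn\rfloor$ it suffices to prove $\bar{sc}_t(n)/sc(n)\to 0$, where $\bar{sc}_t(n)=sc(n)-sc_t(n)$ counts the non-$t$-cores. I would treat the two parities of $t$ separately and simply feed the recursions \eqref{eq:EvenSum} and \eqref{eq:OddSum} the value $t=\lfloor qn\rfloor$. The key point is that when $t$ is proportional to $n$ these sums have only boundedly many terms, all of whose indices are bounded by a constant $M=M(q)$, and in every term the argument of $sc(\cdot)$ is at most $(1-c)n$ for a constant $c=c(q)>0$ — because removing even one multiple of $t\asymp qn$ drops the size by a definite fraction.

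Concretely, for $t=2s$ even, \eqref{eq:EvenSum} gives $\bar{sc}_{2s}(n)=\sum_{1\le i\le\lfloor n/4s\rfloor}sc_{2s}(n-4is)\,\hat p_s(i)$. I would bound $sc_{2s}(n-4is)\le sc(n-4is)$, observe that the number of terms $\lfloor n/(4s)\rfloor$ and each index $i$ are $\le M(q)$ for $n$ large, note that $i\ge 1$ forces $n-4is\le(1-q)n$, and recall that $\hat p_s(i)$ — the number of $s$-tuples of partitions of total size $i$ — is a polynomial in $s$ of degree $i\le M$ (choose which $\le i$ of the $s$ slots are nonempty, then fill them in finitely many $s$-independent ways), hence $\le C(q)\,n^{M}$. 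This yields $\bar{sc}_{2s}(n)/sc(n)\le C'(q)\,n^{M}\,sc(\lceil(1-q)n\rceil)/sc(n)$. (When $q>1/2$ there is nothing to do: Corollary~\ref{cor:n/2} already gives $sc_{2s}(n)=sc(n)$ for $n$ large.) For $t=2s+1$ odd the argument is identical using \eqref{eq:OddSum}: the constraint $1\le 2i+j\le\lfloor n/(2s+1)\rfloor$ forces both $i$ and $j$ to be $\le M(q)$, every nonzero term has $n-(2i+j)(2s+1)\le(1-q)n+1$, the factor $\hat p_s(i)$ is again polynomial of bounded degree, and $sc(j)\le\max_{0\le j\le M}sc(j)$ is a constant, so one gets $\bar{sc}_{2s+1}(n)/sc(n)\le C''(q)\,n^{M}\,sc(\lceil(1-q+o(1))n\rceil)/sc(n)$.

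At this point everything reduces to the single estimate: for each fixed $\alpha\in(0,1)$ and each fixed $d$, $n^{d}\,sc(\lceil\alpha n\rceil)/sc(n)\to 0$. I would obtain this from the classical asymptotic for $sc(n)$ — the number of partitions of $n$ into distinct odd parts — namely $\log sc(n)=\pi\sqrt{n/3}+O(\log n)$, which follows from Meinardus's theorem or a direct circle-method analysis of $\prod_{k\ge1}(1+q^{2k-1})$. Then $\log\!\big(sc(\lceil\alpha n\rceil)/sc(n)\big)=\pi\sqrt{n/3}\,(\sqrt\alpha-1)+O(\log n)$ tends to $-\infty$ at rate $-\Theta(\sqrt n)$, which swamps $d\log n$; summing over the $O(1)$ terms then gives the conjecture, in exact parallel with Craven's proof that $c_{\lfloor qn\rfloor}(n)/p(n)\to1$.

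I expect the only real obstacle to be this last input. The elementary growth bound available in the paper, Lemma~\ref{lem:scn-2}, iterates only to $sc(n-2k)/sc(n)<(n-2k)/n$, i.e.\ linear-order decay, which is hopeless against the polynomial factors $\hat p_s(i)\approx s^{M}$; one genuinely needs that $sc(n)$ grows at the sharp exponential rate $e^{\pi\sqrt{n/3}}$, with matching leading constants in the upper and lower bounds, so that $sc(\alpha n)/sc(n)$ decays faster than any polynomial. Importing (or reproving) this Hardy--Ramanujan-type asymptotic is the one step that goes beyond the methods developed here.
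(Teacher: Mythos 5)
The first thing to say is that the paper does not prove this statement: it appears in Section~\ref{sec:other} as an open conjecture (the self-conjugate analogue of Craven's theorem), so there is no proof of record to compare yours against. Judged on its own, your outline is essentially the right strategy and runs parallel to Craven's argument for $c_{\lfloor qn\rfloor}(n)/p(n)\to 1$: with $t\asymp qn$ the recursions \eqref{eq:EvenSum} and \eqref{eq:OddSum} have only $O_q(1)$ terms, every index is at most $M(q)$, each $\hat{p}_t(i)$ is a polynomial in $t$ of degree $i\leq M$ (your slot-counting argument for this is correct), every surviving term has its $sc(\cdot)$ argument at most $(1-c(q))n$, and Corollaries~\ref{cor:n/2} and \ref{cor:n/2odd} dispose of $q>1/2$; you also correctly observe that Lemma~\ref{lem:scn-2} is far too weak to finish and that the real input is a two-sided Hardy--Ramanujan/Meinardus-type asymptotic for $sc(n)$, so that $n^{d}\,sc(\lceil\alpha n\rceil)/sc(n)\to 0$. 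That input is classical, so the plan would go through --- but the formula you quote for it is wrong: $sc(n)$ counts partitions into distinct \emph{odd} parts, and Meinardus applied to $\prod_{k\geq 1}(1+q^{2k-1})$ gives $\log sc(n)=\pi\sqrt{n/6}+O(\log n)$ (more precisely $sc(n)\sim e^{\pi\sqrt{n/6}}\big/\big(2\cdot 24^{1/4}n^{3/4}\big)$), whereas $\pi\sqrt{n/3}$ is the exponent for partitions into distinct parts; a quick sanity check at $n=60$, where $sc(60)=209$, rules out $\pi\sqrt{n/3}$. The slip is harmless for your argument, since only the $\Theta(\sqrt{n})$ growth of $\log sc$ with matching constants on both sides is used, but it should be corrected. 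Two small details to nail down in a write-up: $sc(m)$ is not termwise monotone for small $m$, so when you replace the several arguments $n-4is$ (resp.\ $n-(2i+j)(2t+1)$) by the largest one you should appeal to the asymptotic itself, or to $sc(m)<sc(m+2)$ for $m\geq 17$, rather than to monotonicity; and the crude bound $sc(m)\leq p(m)$ would \emph{not} suffice here (its exponent $\pi\sqrt{2m/3}$ at $m=\alpha n$ beats $\pi\sqrt{n/6}$ unless $\alpha<1/4$), which is exactly why the sharp asymptotic for $sc$ itself is needed, as you rightly flag.
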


It appears that much more is true.  Recall that a sequence $\{x_t\}_{0\leq t\leq r}$ is {\em unimodal} if there exists a number $T$ such that \[x_0\leq x_1 \leq \cdots x_{T-1} \leq x_T \geq x_{T+1} \geq x_{T+2} \geq \cdots \geq x_r.\]
Unimodality is a property that arises naturally in many areas, including combinatorics, geometry, and algebra; Brenti's survey article \cite{Unimo} gives examples and references. In \cite{Stanton}, Stanton discusses the unimodality of the coefficients of the generating function for partitions and self-conjugate partitions whose Young diagrams fit inside a given shape. 

It appears that for $n$ fixed and large enough, the sequences $\pi_t(n)$, $\sigma_{2t}(n)$, and $\sigma_{2t+1}(n)$ are unimodal.  We state these as conjectures.
\begin{conj}
For fixed $n\geq 63$, the sequence $\{\pi_t(n)\}_{4\leq t\leq n-7}$ is unimodal.
\end{conj}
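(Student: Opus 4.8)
\emph{Reformulation.} Since $\pi_t(n)$ is the first difference of the profile $P_n(t):=c_t(n)/p(n)$, it suffices to show that the second difference $\Delta^2P_n(t):=P_n(t+1)-2P_n(t)+P_n(t-1)$ changes sign at most once on $4\le t\le n-7$, passing from $\ge 0$ to $\le 0$: then $\pi_t(n)=P_n(t+1)-P_n(t)$ is nondecreasing and then nonincreasing, hence unimodal. The plan is to split the range into the \emph{tail} $n/2<t\le n-7$, where $c_t(n)$ is given in closed form, and the \emph{body} $4\le t\le n/2$, where one sieves $c_t(n)$ through $t$-cores and $t$-quotients exactly as in Craven \cite{Craven} and, for the self-conjugate analogue, in Theorems~\ref{thm:sc2t} and \ref{thm:sc2t+1}. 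The estimates below assume $n$ is large; the finitely many remaining values $63\le n<N_0$ are to be verified directly from $\sum_{n}c_t(n)q^n=\prod_{m\ge1}(1-q^{mt})^t/(1-q^m)$.

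\emph{The tail.} When $n/2<t<n$, a partition of $n$ that is not a $t$-core has $t$-core a partition of $n-t$ (automatically a $t$-core, since $n-t<t$) together with a $t$-quotient consisting of a single box in one of $t$ positions; hence $c_t(n)=p(n)-t\,p(n-t)$ and $p(n)\,\pi_t(n)=t\,p(n-t)-(t+1)\,p(n-t-1)$. With $m=n-t$ (so $7\le m<n/2$), one computes $p(n)\bigl(\pi_t(n)-\pi_{t+1}(n)\bigr)=t\bigl(p(m)-2p(m-1)+p(m-2)\bigr)-2\bigl(p(m-1)-p(m-2)\bigr)$, which one verifies is positive on the tail using the log-concavity of the partition function ($p(m-1)^2>p(m-2)p(m)$ for $m\ge 27$, a theorem of DeSalvo and Pak) and the Hardy--Ramanujan growth rate: the bracketed second difference is positive and exceeds a constant multiple of $\bigl(p(m-1)-p(m-2)\bigr)/\sqrt m$, while $t>m$ there. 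Thus $\pi_t(n)$ is strictly decreasing on the tail, with positive endpoint $\pi_{n-7}(n)=(4n-39)/p(n)$, so the tail lies on the decreasing branch, as unimodality demands.

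\emph{The body, and the main obstacle.} For $t\le n/2$ one iterates the sieve: on the window $n/(k+1)<t\le n/k$ the generating function yields $c_t(n)=\sum_{\ell=0}^{k}e_\ell(t)\,p(n-\ell t)$, where the $e_\ell$ are the universal polynomials furnished by the pentagonal number theorem ($e_0=1$, $e_1(t)=-t$, $e_2(t)=\binom{t}{2}-t$, and in general $\deg e_\ell=\ell$ with leading term $(-1)^\ell t^\ell/\ell!$); consequently $\Delta^2P_n(t)$ is an explicit combination of values $p(m)$, $m\le n$, with polynomial-in-$t$ coefficients that depend on $n\bmod t$ and $n\bmod(t+1)$. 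The steps would be: (i) on each window, after dispatching the finitely many residue classes, show $\Delta^2P_n(t)$ has constant sign and determine it; (ii) show this sign is $\ge 0$ on the windows with $t$ small and $\le 0$ on those with $t$ large, with exactly one window --- expected near $t\asymp\sqrt n$, where by Craven $P_n$ makes its transition toward $1$ --- in which the sign flips; (iii) in the deepest windows, $4\le t\lesssim\sqrt n$, verify separately that $\pi_t(n)$ is strictly increasing. The crux, and the step I expect to be the genuine obstacle, is estimating the alternating sum $\sum_{\ell}e_\ell(t)\,p(n-\ell t)$ uniformly in $t$ when $n/t$ is large: the cancellation is severe (the sum is $p(n)\bigl(1-o(1)\bigr)$ once $t$ exceeds a suitable multiple of $\sqrt n\log n$, but is $o(1)\cdot p(n)$ by a wide margin when $t\lesssim\sqrt n$, so $\pi_t(n)$ sweeps through many orders of magnitude), and unimodality is a \emph{two-sided} statement: unlike Stanton's inequality $c_{t+1}(n)\ge c_t(n)$, one must simultaneously rule out an interior dip of $\pi_t(n)$ and a late rise after a near-plateau. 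There is also a quantitative tension --- an analytic route via uniform asymptotics of $P_n(t)$ in the spirit of Craven's $P_n(\lfloor qn\rfloor)\to1$ would presumably only establish the claim for $n$ beyond any feasible $N_0$ --- so an effective treatment of the body is essential.

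\emph{A possibly cleaner milestone.} Because known cases of Stanton's conjecture \cite{Craven,Anderson,KimRouse} give $\pi_t(n)>0$ throughout the interior of the range for all but finitely many $(t,n)$, it would suffice --- and might be more tractable --- to prove the stronger log-concavity $\pi_t(n)^2\ge\pi_{t-1}(n)\,\pi_{t+1}(n)$. On the tail this reduces via the closed form above to an inequality among partition numbers; carrying it through the body, with particular care in the small-$t$ windows where $\pi_t(n)$ is smallest, remains the central difficulty.
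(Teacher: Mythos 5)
This statement is one of the paper's \emph{conjectures} (stated in the ``Future directions'' section on the basis of computations; the paper proves nothing toward it beyond noting that closed formulas in the large-$t$ range give partial information), so there is no paper proof to compare against; the question is whether your argument itself establishes the claim, and it does not. What you actually prove (modulo routine checks) is only the tail: for $n/2<t\le n-7$ the closed form $c_t(n)=p(n)-t\,p(n-t)$ yields $p(n)\bigl(\pi_t(n)-\pi_{t+1}(n)\bigr)=t\bigl(p(m)-2p(m-1)+p(m-2)\bigr)-2\bigl(p(m-1)-p(m-2)\bigr)$ with $m=n-t$, and convexity plus growth estimates for $p$ make this positive (though the quantitative comparison for small $m$, down to $m=7$, is asserted rather than verified). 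That is the easy half, analogous to the paper's Corollary~\ref{cor:n/2odd}--type computations, and it only shows the tail sits on a decreasing branch.

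The genuine gap is the entire body $4\le t\le n/2$, which contains the peak of $\pi_t(n)$ (empirically near $t\asymp\sqrt n$) and hence all of the content of unimodality. For that range you give a plan --- steps (i)--(iii) on the windows $n/(k+1)<t\le n/k$ --- but the decisive step, uniform two-sided control of the heavily cancelling sum $\sum_\ell e_\ell(t)\,p(n-\ell t)$ and a single-sign-change statement for the second difference across all windows, is exactly what you yourself flag as ``the genuine obstacle'' and do not carry out; known positivity results toward Stanton's conjecture (Craven, Anderson, Kim--Rouse) give one-sided information and cannot rule out an interior dip or a late rise, and your proposed fallback (log-concavity of $t\mapsto\pi_t(n)$) is strictly stronger than unimodality, so it does not reduce the difficulty. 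Two smaller inaccuracies: the window coefficients in $c_t(n)=\sum_\ell e_\ell(t)\,p(n-\ell t)$ do not depend on $n\bmod t$ (the formula is exact with $0\le\ell\le\lfloor n/t\rfloor$), and these $e_\ell$ come from expanding $\prod_m(1-q^{tm})^t$, not from the pentagonal number theorem. As it stands the submission is a research program with a proved fragment, not a proof of the conjecture.
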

\begin{conj}
\label{conj:unimodal}
For fixed $n\geq 139$, the sequence $\{\sigma_{2t}(n)\}_{8\leq 2t\leq 2\lfloor \frac{n}{4}\rfloor-8}$ is unimodal.  Further, for fixed $n\geq 213$, the sequence $\{\sigma_{2t+1}(n)\}_{9\leq 2t+1\leq \lfloor \frac{n}{2}\rfloor}$ is unimodal.
\end{conj}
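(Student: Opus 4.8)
The plan is to strip off the positive normalizing constant and reduce each assertion to the unimodality of a difference sequence, then prove that sequence rises and then falls by working region by region with the closed forms of Section~\ref{sec:mono}. For fixed $n$, unimodality of $\{\sigma_{2t}(n)\}$ is equivalent to unimodality of $d_t:=sc_{2t+2}(n)-sc_{2t}(n)$, and the odd statement is equivalent to unimodality of $d'_t:=sc_{2t+3}(n)-sc_{2t+1}(n)$. By Corollary~\ref{cor:n/2} these sequences are eventually zero, and the stated ranges stop short of that plateau, so the goal is to locate a single interior mode, which one expects to sit near $2t=n/4$.

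\emph{Large cores.} For $n/8<t\le\lfloor n/4\rfloor-4$, Proposition~\ref{prop:evenLarge} gives $sc_{2t}(n)=sc(n)-t\,sc(n-4t)$, so, writing $m=n-4t$,
\[d_t=t\,sc(m)-(t+1)\,sc(m-4)=t\big(sc(m)-sc(m-4)\big)-sc(m-4).\]
I would show $\{d_t\}$ is strictly decreasing here. Heuristically, for large $n$ the multiplier $t$ (of size about $n/8$ up to $n/4$) dominates and the statement becomes that the step-$4$ difference of $sc$ is increasing in its argument, which holds to leading order; turning this into a proof valid down to $n\ge139$ requires an \emph{effective} lower bound $sc(m-4)/sc(m)>1-O(1/m)$ — obtainable by the surjection-counting bookkeeping in the proof of Lemma~\ref{lem:scn-2}, now bounding $|C_n|$ from above — to dominate the small genuine fluctuations of the sequence $\{sc(m)\}$ recorded in the Remark after Lemma~\ref{lem:scn-4}. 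Together with Lemma~\ref{lem:scn-4} this reduces the subrange to finitely many polynomial inequalities in $n$ and $t$.

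\emph{Small cores.} For $4\le t\le n/8$, one works from Theorem~\ref{thm:sc2t}, equivalently from the convolution $sc_{2t}(n)=\sum_{i\ge0}\mu_t(i)\,sc(n-4ti)$, where $\mu_t(i)=[q^i]\prod_{k\ge1}(1-q^{k})^{t}$ is a polynomial in $t$ of degree $i$ with leading coefficient $(-1)^i/i!$; this identity is immediate from the product form $\sum_n sc_{2t}(n)q^n=\big(\prod_{k\ge1}(1-q^{4tk})^{t}\big)\cdot\sum_m sc(m)q^m$ of the generating function~\eqref{eq:scgf}. The claim is that $\{d_t\}$ is weakly increasing here, and the work is to sign $d_{t+1}-d_t$: one isolates the dominant term $-t\,sc(n-4t)$, whose contribution to $d_{t+1}-d_t$ has a fixed sign on this range, and controls the alternating tail $\sum_{i\ge2}\mu_t(i)\,sc(n-4ti)$ and its $t\mapsto t+1$ shift using the degree bound on $\mu_t(i)$ and the lower bounds $sc(n+2)-sc(n)>1$ for $n\ge24$ from the same Remark. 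Gluing the two subranges then yields unimodality with mode near $n/8$, and the threshold $n\ge139$ is forced by finitely many small-$n$ exceptions checked directly from~\eqref{eq:scgf}. The odd statement follows by the identical three-step scheme with Corollary~\ref{cor:n/2odd}, Corollary~\ref{cor:largeOdd} and Proposition~\ref{prop:oddLarge} governing the large-core range and Theorem~\ref{thm:sc2t+1} the small-core range; the extra $sc(j)$ factors thicken the tail estimate but do not change its structure, and $n\ge213$ absorbs the exceptions.

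\textbf{Main obstacle.} The crux is the small-core range. There the summands $\mu_t(i)\,sc(n-4ti)$ are of genuinely comparable size — $\hat{p}_t(i)$, and hence $|\mu_t(i)|$, grows polynomially in $t$ while $sc(n-4ti)$ decays — and the sign of $\mu_t(i)$ alternates with $i$, so naive triangle-inequality bounds are too lossy to determine the sign of $d_{t+1}-d_t$. A rigorous argument will almost certainly need two-sided ratio estimates for $sc(\cdot)$ sharper than Lemmas~\ref{lem:scn-2}--\ref{lem:scn-4}, or an analytic treatment of the product formula above with error terms effective uniformly in $n\ge139$ (respectively $n\ge213$).
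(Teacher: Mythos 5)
The statement you are trying to prove is Conjecture~\ref{conj:unimodal}: the paper offers no proof of it, and explicitly remarks that the formulas of Propositions~\ref{prop:evenLarge} and \ref{prop:oddLarge} ``allow for partial results \dots but the hard work is yet to be done.'' So there is no paper argument to compare against, and your proposal does not close the gap either --- it is a programme rather than a proof, and you say as much yourself. The genuine gap is exactly where you flag it: on the small-core range ($4\le t\lesssim n/8$), the convolution $sc_{2t}(n)=\sum_{i\ge 0}\mu_t(i)\,sc(n-4ti)$ (which is a correct consequence of~\eqref{eq:scgf}, with $\mu_t(i)=[q^i]\prod_{k\ge1}(1-q^k)^t$ a degree-$i$ polynomial in $t$ with alternating leading sign) has summands of comparable magnitude and alternating sign, and nothing in the paper --- neither Lemma~\ref{lem:scn-2}, Lemma~\ref{lem:scn-4}, nor Theorem~\ref{thm:sc2t} --- supplies ratio estimates for $sc(\cdot)$ sharp enough to sign $d_{t+1}-d_t$ there. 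Asserting that the needed two-sided bounds ``will almost certainly be needed'' identifies the obstacle but does not overcome it.

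Moreover, even the large-core half of your plan is not complete as written. The claim that $d_t=t\,sc(m)-(t+1)\,sc(m-4)$ (with $m=n-4t$) is strictly decreasing across $n/8<t\le\lfloor n/4\rfloor-4$ is argued only ``heuristically''; the effective lower bound on $sc(m-4)/sc(m)$ you invoke is said to be obtainable from the surjection bookkeeping in the proof of Lemma~\ref{lem:scn-2}, but that proof only bounds $|\Lambda_{\beta^*}|<n/2$ to get a one-sided inequality $sc(n-2)/sc(n)<n/(n+2)$; it gives no lower bound of the form $1-O(1/m)$, and the fluctuations of $sc(m)$ for small $m$ (it is not even monotone below $m=17$) mean the decreasing claim can genuinely fail near the top of the $t$-range without finer information. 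Finally, the thresholds $n\ge 139$ and $n\ge 213$ are not ``forced by finitely many small-$n$ exceptions'' in any argument you give: since both subrange claims are unproved for general $n$, there is no finite verification that would complete the proof. In short, your reduction to unimodality of $d_t$ and your identification of the mode near $2t\approx n/4$ are sensible first steps (and consistent with the data in Figure~\ref{fig:distr}), but the statement remains a conjecture after your proposal.
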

The formulas given in Propositions~\ref{prop:evenLarge} and \ref{prop:oddLarge} allow for partial results toward Conjecture~\ref{conj:unimodal}, but the hard work is yet to be done.  

More pointedly, we can ask for the shape of the distribution---perhaps it is approaching a normal distribution, but after its peak it appears to decrease with a tail that is fatter than normal.  Because the pointwise limit of the distribution is the zero distribution (by Craven's theorem), the ``right question'' is more along the lines of finding the shape of the distribution as $n$ goes to infinity.  We state this as an open question.

\begin{question}
For $n$ sufficiently large, is there a limiting shape of the distributions of $\pi_t(n)$, $\sigma_{2t}(n)$, and $\sigma_{2t+1}(n)$?
\end{question}

Ideally, one would be able to find a combinatorial interpretation for $sc_{t+2}(n)-sc_t(n)$ to prove its positivity and understand its asymptotics.  

\begin{question}
Is there a simple combinatorial description of $c_{t+1}(n)-c_t(n)$?  Of $sc_{t+2}(n)-sc_t(n)$?
\end{question}

\subsection{Numerical identities and inequalities}\

Another direction is related to numerical identities involving core partitions. Garvan, Kim, and Stanton prove that $sc_5(2n+1) = sc_5(n)$, $sc_5(5n+4)=sc_5(n)$, and $sc_7(4n+6)=sc_7(n)$ using \cite[Equation~(7.4)]{Garvan}. Using Ramanujan's theta functions, Baruah and Berndt \cite{BB} prove $sc_3(4n+1) = sc_3(n)$ and Sarmah \cite{Sarmah-thesis} proves $sc_9(8n + 10) = sc_9(2n)$.  Further, Berkovich and Yesilyurt \cite{BY} prove inequalities such as $c_7(2n+2)\geq 2\,c_7(n)$ and $c_7(4n+6)\geq 10\,c_7(n)$.  

We aimed to find similar identities and inequalities.  Experimental data suggests the following conjectures.

\begin{conj} \label{conj:1.9} Let $n$ be a non-negative integer.  
\begin{enumerate}
\item Suppose $n\geq 49$.  Then $sc_9(4n)>3\,sc_9(n)$.
\item Suppose $n\geq 1$.  Then $sc_9(4n+1)>1.9\,sc_9(n)$.
\item Suppose $n\geq 17$.  Then $sc_9(4n+3)>1.9\,sc_9(n)$.
\item Suppose $n\geq 1$.  Then $sc_9(4n+4)>2.6\,sc_9(n)$.
\end{enumerate}
\end{conj}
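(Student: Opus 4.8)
The approach is to realize $sc_9$ as the Fourier coefficients of a holomorphic weight-$2$ modular form, as with $sc_5$ and $sc_7$ in \cite{Garvan}, to $4$-dissect it, and to split the dissected forms into their Eisenstein and cuspidal parts. With $q=e^{2\pi i\tau}$ one has $\prod_{k\ge1}(1+q^{2k-1})=q^{1/24}\,\eta(2\tau)^2/(\eta(\tau)\eta(4\tau))$, and the same identity in the variable $q^{9}$; substituting into the $t=9$ case of \eqref{eq:scgf} turns it, up to an explicit fractional power of $q$, into the eta-quotient
\[
\frac{\eta(2\tau)^2\,\eta(9\tau)\,\eta(18\tau)^2\,\eta(36\tau)}{\eta(\tau)\,\eta(4\tau)},
\]
of weight $2$. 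After the substitution $q\mapsto q^{3}$ one checks, using Ligozat's cusp-order criterion, that this is a \emph{holomorphic} modular form of weight $2$ on $\Gamma_0(108)$ with trivial character, in which $sc_9(n)$ appears as the coefficient of $q^{3n+10}$; equivalently, this is the four-variable counterpart of McDoniel's three-squares description of $sc_7$, so that $sc_9(n)=r_L(3n+10)$ for a positive-definite quaternary $\mathbb{Z}$-lattice $L$ cut out of $\mathbb{Z}^{4}$ by congruences.

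\emph{Dissection and reduction.} For $i\in\{0,1,2,3\}$ put $G_i(\tau)=\sum_{n\ge0}sc_9(4n+i)q^{n}$. A $4$-dissection of the form above --- obtained either from the classical $2$-dissections of $\varphi(q)$ and $\psi(q)$ applied twice, or from the standard sieving operators that isolate an arithmetic progression of coefficients --- exhibits each $G_i$, after renormalization, as a holomorphic weight-$2$ modular form of explicit (larger) level. Write the Eisenstein/cuspidal splittings as $sc_9(m)=b(m)+s(m)$ and $sc_9(4n+i)=b_i(n)+s_i(n)$. Here $b$ and $b_i$ are explicit finite combinations of twisted divisor sums $\sigma_1^{\chi,\psi}$, hence each lies within a bounded (at worst $\log\log$) factor of its argument, and the cuspidal coefficients satisfy $|s(m)|,|s_i(n)|\ll_\varepsilon n^{1/2+\varepsilon}$ by Deligne's bound, with effective implied constants obtainable by enumerating the finitely many weight-$2$ newforms at the levels involved. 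The conjecture then reduces to: (i) the inequality $b_i(n)>c_i\,b(4n+i)$ between Eisenstein main terms, with $c_i\in\{3,1.9,1.9,2.6\}$ in the four cases; (ii) absorbing the cuspidal error, which costs only $O_\varepsilon(n^{1/2+\varepsilon})$ against a main term of order $n$; (iii) a finite check for $n$ below the resulting threshold, covering the quoted bounds ($n\ge49$ in (1), $n\ge17$ in (3), $n\ge1$ in (2) and (4)), by computing $sc_9(m)$ for $m$ up to a few thousand from \eqref{eq:scgf}; and (iv) the degenerate case $sc_9(n)=0$, i.e.\ $n=(4^{k}-10)/3$ with $k\ge2$ (Montgomery's characterization in \cite{Baldwin}), which is immediate because such $n$ satisfy $n\equiv 2\pmod 4$ whereas $4n,4n+1,4n+3,4n+4$ do not, so $sc_9(4n+i)>0$.

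\emph{Main obstacle.} Step (i) is where the real work lies, and it turns on which Eisenstein series actually appear. If, as one expects for a form this close to a sum of four squares, the Eisenstein component is built from principal (untwisted) characters, then $b_i(n)$ and $b(4n+i)$ are, up to constants, the \emph{same} divisor sum read off in a common variable --- their $\log\log$-size fluctuations then cancel, just as $r_4(4m)$ and $r_4(m)$ are both proportional to $\sigma$ of the odd part of $m$ --- and (i) collapses to elementary multiplicative inequalities, to be verified on each residue class modulo a suitable power of $6$ (the bad primes $2$ and $3$ needing separate bookkeeping), the ``extra'' factors of $3$ and the like arising from the local Euler factor at $2$. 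The hazard is a genuinely twisted Eisenstein piece, which would leave the two divisor sums arithmetically independent and their fluctuations uncorrelated; in that regime a pointwise inequality with a fixed constant may fail, and one would have to settle for the inequalities holding for all $n$ outside a provably sparse exceptional set. Finally, the non-integrality of the thresholds $1.9$ and $2.6$ indicates that no clean product identity underlies parts (2) and (4): these are best read as a little slack below the ``true'' integer constants ($2$ and $3$), absorbing finitely many small $n$. For any cuspidal-nonnegativity statements that surface en route (for instance in residue classes where the Eisenstein contribution vanishes), the exact-dissection technique of Berkovich--Yesilyurt \cite{BY} would be the natural tool.
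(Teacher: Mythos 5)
First, a point of comparison: the statement you are addressing is Conjecture~\ref{conj:1.9}, which the paper does not prove at all --- it is offered only on the strength of experimental data generated from \eqref{eq:scgf}, so there is no proof of record to measure your argument against. Your sketch is therefore not ``an alternative route''; it would be the first proof, and it does not yet reach that standard. The setup is reasonable and essentially correct: the $t=9$ case of \eqref{eq:scgf} does convert, via $\prod_{k\ge 1}(1+q^{2k-1})=q^{1/24}\eta(2\tau)^2/(\eta(\tau)\eta(4\tau))$, into a weight-$2$ eta quotient with $sc_9(n)$ sitting at exponent $3n+10$ after $q\mapsto q^3$, the connection to a quaternary form is consistent with the sums-of-squares picture in \cite{Garvan} and \cite{Baldwin}, and your observation in step (iv) that Montgomery's zero set $(4^k-10)/3$ lies in the class $2\bmod 4$, hence misses $4n$, $4n+1$, $4n+3$, $4n+4$, is a correct and useful detail.

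The genuine gap is that the analytic heart of the argument --- your step (i) together with the effective form of (ii) --- is not carried out, and you yourself concede it might fail as stated. The inequalities involve the specific constants $3$, $1.9$, $1.9$, $2.6$, so everything hinges on an explicit identification of the Eisenstein components of the dissected forms and a pointwise comparison $b_i(n)>c_i\,b(4n+i)$ with those constants; you give no computation of these Eisenstein parts, and you note that a twisted piece would leave the two divisor sums uncorrelated, in which case a fixed-constant pointwise inequality ``may fail'' and only an almost-all statement would survive --- which is weaker than the conjecture. Likewise the Deligne bound is invoked with ``effective implied constants obtainable by enumerating the newforms,'' but no levels are pinned down, no constants are produced, no threshold $N_0$ is derived, and the finite verification below it (which must in particular explain why $n\ge 49$ is needed in (1) and $n\ge 17$ in (3)) is not performed. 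As it stands this is a plausible research program --- close in spirit to what Kim--Rouse do for $c_t(n)$ in \cite{KimRouse} and to the dissection identities of \cite{BY} --- rather than a proof, and the conjecture should still be regarded as open.
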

Conjecture~\ref{conj:1.9} gives some conjectures in a family of inequalities of the form $sc_t(an+b)>\alpha\,sc_t(n)$.  It appears that for $t=9$ and $a=4$, then there exists a constant $\alpha>1$ where this is true for all $b$ not equal to $2$ modulo $4$.  It would be of interest to determine the value and interpretation of these constants.

We do not expect identities of the form $sc_t(an+b)=sc_t(n)$ for integers $a$ and $b$ for odd $t\geq 11$ and even $t\geq 8$, nor do we expect inequalities of the form $sc_t(an+b)>\alpha\,sc_t(n)$ for odd $t\leq 7$ and even $t\leq 6$.

\appendix
\section{Tables of values}
\label{sec:appendix}

Here we present tables of values of $sc_t(n)$ and $sc_{t+2}(n)-sc_t(n)$, generated by extracting coefficients from the generating function in Equation~\eqref{eq:scgf}.

\bigskip
{\bf Acknowledgements.} Both authors would like to thank Matt Fayers, Ben Ford, and Ken Ono for conversations on this topic.  We thank Noam Elkies for his conversations about the positivity of 7-core partitions and the correspondence with representations of integers as the sum of three squares.  We also would like to thank Nayandeep Deka Baruah, J\o rn Olsson, and an anonymous referee for correcting and suggesting references related to our work.  The second author thanks Jean-Baptiste Gramain for his hospitality while visiting the University of Paris and for early discussions on a Stanton-type conjecture for self-conjugate partitions while there.

\begin{sidewaystable}
\vspace{6.5in}
\hspace{-.5in}
{\tiny
\begin{tabular}{c|c@{\spa}c@{\spa}c@{\spa}c@{\spa}c@{\spa}c@{\spa}c@{\spa}c@{\spa}c@{\spa}c@{\spa}c@{\spa}c@{\spa}c@{\spa}c@{\spa}c@{\spa}c@{\spa}c@{\spa}c@{\spa}c@{\spa}c@{\spa}c@{\spa}c@{\spa}c@{\spa}c@{\spa}c@{\spa}c@{\spa}c@{\spa}c@{\spa}c@{\spa}c@{\spa}c@{\spa}c@{\spa}c@{\spa}c@{\spa}c@{\spa}c@{\spa}c@{\spa}c@{\spa}c@{\spa}c@{\spa}c@{\spa}c@{\spa}c@{\spa}c@{\spa}c@{\spa}c@{\spa}c@{\spa}c@{\spa}c@{\spa}c@{\spa}c@{\spa}c@{\spa}c@{\spa}c@{\spa}c@{\spa}c@{\spa}c@{\spa}c@{\spa}c@{\spa}c@{\spa}c@{\spa}} 
$_t\backslash^n$ & 0 &1 &2 &3& 4&5&6&7&8&9&10&11&12&13&14&15&16&17&18&19&20&21&22&23&24&25&26&27&28&29&30&31&32&33&34&35&36&37&38&39&40&41&42&43&44&45&46&47&48&49&50&51&52&53&54&55&56&57&58&59&60\\ \hline
2 & 1 & 1 & 0 & 1 & 0 & 0 & 1 & 0 & 0 & 0 & 1 & 0 & 0 & 0 & 0 & 1 & 0 & 0 & 0 & 0 & 0 & 1 & 0 & 0 & 0 & 0 & 0 & 0 & 1 & 0 & 0 & 0 & 0 & 0 & 0 & 0 & 1 & 0 & 0 & 0 & 0 & 0 & 0 & 0 & 0 & 1 & 0 & 0 & 0 & 0 & 0 & 0 & 0 & 0 & 0 & 1 & 0 & 0 & 0 & 0 & 0 \\
3 & \text{} & 1 & 0 & 0 & 0 & 1 & 0 & 0 & 1 & 0 & 0 & 0 & 0 & 0 & 0 & 0 & 1 & 0 & 0 & 0 & 0 & 1 & 0 & 0 & 0 & 0 & 0 & 0 & 0 & 0 & 0 & 0 & 0 & 1 & 0 & 0 & 0 & 0 & 0 & 0 & 1 & 0 & 0 & 0 & 0 & 0 & 0 & 0 & 0 & 0 & 0 & 0 & 0 & 0 & 0 & 0 & 1 & 0 & 0 & 0 & 0 \\
4 &  \text{} & \text{} & 0 & 1 & 1 & 1 & 1 & 1 & 0 & 0 & 2 & 0 & 1 & 1 & 1 & 2 & 0 & 0 & 1 & 1 & 0 & 1 & 1 & 0 & 1 & 2 & 0 & 2 & 1 & 0 & 1 & 0 & 1 & 1 & 1 & 0 & 1 & 0 & 0 & 1 & 3 & 1 & 0 & 1 & 0 & 2 & 1 & 0 & 1 & 1 & 1 & 0 & 1 & 0 & 0 & 2 & 0 & 1 & 0 & 1 & 2 \\
5 & \text{} & \text{} & \text{} & 1 & 1 & 0 & 0 & 1 & 1 & 1 & 0 & 0 & 2 & 0 & 0 & 1 & 2 & 1 & 0 & 1 & 0 & 0 & 0 & 0 & 1 & 2 & 0 & 0 & 2 & 0 & 0 & 1 & 0 & 2 & 0 & 1 & 2 & 0 & 0 & 1 & 2 & 0 & 0 & 0 & 1 & 0 & 0 & 0 & 1 & 1 & 0 & 2 & 2 & 0 & 0 & 0 & 0 & 2 & 0 & 0 & 2 \\ \hline
6 & \text{} & \text{} & \text{} & \text{} & 1 & 1 & 1 & 1 & 2 & 2 & 2 & 2 & 0 & 0 & 3 & 1 & 2 & 2 & 2 & 3 & 1 & 2 & 2 & 3 & 2 & 3 & 3 & 2 & 1 & 2 & 3 & 2 & 2 & 1 & 2 & 2 & 5 & 4 & 1 & 4 & 3 & 3 & 3 & 2 & 4 & 3 & 4 & 1 & 3 & 3 & 2 & 4 & 4 & 3 & 2 & 3 & 3 & 4 & 2 & 3 & 3 \\
7 & \text{} & \text{} & \text{} & \text{} & \text{} & 1 & 1 & 0 & 1 & 2 & 1 & 1 & 2 & 2 & 0 & 0 & 3 & 1 & 1 & 1 & 2 & 4 & 1 & 0 & 3 & 4 & 1 & 2 & 2 & 2 & 1 & 0 & 2 & 3 & 0 & 2 & 5 & 2 & 1 & 0 & 3 & 2 & 2 & 3 & 2 & 6 & 1 & 0 & 5 & 2 & 1 & 4 & 4 & 2 & 2 & 0 & 4 & 5 & 2 & 1 & 6 \\
8 & \text{} & \text{} & \text{} & \text{} & \text{} & \text{} & 1 & 1 & 2 & 2 & 2 & 2 & 3 & 3 & 3 & 4 & 1 & 1 & 5 & 2 & 3 & 4 & 4 & 5 & 3 & 4 & 4 & 6 & 4 & 5 & 6 & 4 & 5 & 7 & 6 & 7 & 7 & 5 & 7 & 7 & 6 & 5 & 8 & 5 & 5 & 6 & 6 & 6 & 13 & 11 & 4 & 11 & 7 & 9 & 9 & 6 & 11 & 12 & 10 & 8 & 13 \\
9 & \text{} & \text{} & \text{} & \text{} & \text{} & \text{} & \text{} & 1 & 2 & 1 & 1 & 2 & 2 & 2 & 2 & 3 & 4 & 3 & 0 & 1 & 5 & 2 & 2 & 3 & 4 & 4 & 1 & 5 & 6 & 4 & 3 & 5 & 7 & 4 & 1 & 6 & 8 & 5 & 3 & 5 & 8 & 5 & 2 & 5 & 8 & 6 & 4 & 4 & 11 & 6 & 1 & 5 & 9 & 5 & 6 & 11 & 8 & 9 & 2 & 8 & 14 \\
10 & \text{} & \text{} & \text{} & \text{} & \text{} & \text{} & \text{} & \text{} & 2 & 2 & 2 & 2 & 3 & 3 & 3 & 4 & 5 & 5 & 5 & 6 & 2 & 3 & 8 & 4 & 6 & 7 & 7 & 9 & 6 & 7 & 8 & 10 & 8 & 10 & 11 & 9 & 8 & 10 & 12 & 11 & 16 & 14 & 12 & 17 & 13 & 13 & 17 & 13 & 17 & 18 & 18 & 17 & 17 & 15 & 18 & 19 & 17 & 18 & 18 & 17 & 24 \\ \hline
11 & \text{} & \text{} & \text{} & \text{} & \text{} & \text{} & \text{} & \text{} & \text{} & 2 & 2 & 1 & 2 & 3 & 2 & 3 & 4 & 4 & 4 & 4 & 5 & 6 & 2 & 2 & 8 & 5 & 4 & 5 & 7 & 8 & 4 & 5 & 7 & 12 & 8 & 6 & 12 & 10 & 6 & 8 & 12 & 13 & 10 & 8 & 14 & 17 & 8 & 8 & 18 & 17 & 10 & 10 & 18 & 18 & 12 & 11 & 20 & 19 & 12 & 11 & 22 \\
12 & \text{} & \text{} & \text{} & \text{} & \text{} & \text{} & \text{} & \text{} & \text{} & \text{} & 2 & 2 & 3 & 3 & 3 & 4 & 5 & 5 & 5 & 6 & 7 & 8 & 8 & 9 & 5 & 6 & 12 & 8 & 10 & 11 & 12 & 14 & 11 & 13 & 14 & 17 & 15 & 17 & 19 & 17 & 16 & 19 & 22 & 21 & 21 & 20 & 24 & 24 & 30 & 30 & 26 & 32 & 30 & 32 & 34 & 33 & 37 & 36 & 40 & 36 & 38 \\
13 & \text{} & \text{} & \text{} & \text{} & \text{} & \text{} & \text{} & \text{} & \text{} & \text{} & \text{} & 2 & 3 & 2 & 2 & 4 & 4 & 4 & 4 & 5 & 6 & 6 & 6 & 7 & 9 & 9 & 4 & 6 & 12 & 7 & 8 & 10 & 12 & 13 & 8 & 11 & 14 & 14 & 10 & 18 & 21 & 13 & 14 & 18 & 22 & 19 & 16 & 20 & 26 & 23 & 16 & 22 & 33 & 26 & 20 & 32 & 35 & 30 & 24 & 28 & 43 \\
14 & \text{} & \text{} & \text{} & \text{} & \text{} & \text{} & \text{} & \text{} & \text{} & \text{} & \text{} & \text{} & 3 & 3 & 3 & 4 & 5 & 5 & 5 & 6 & 7 & 8 & 8 & 9 & 11 & 12 & 12 & 14 & 9 & 10 & 18 & 13 & 16 & 18 & 19 & 22 & 19 & 21 & 23 & 27 & 25 & 28 & 31 & 29 & 28 & 33 & 37 & 36 & 38 & 37 & 42 & 44 & 40 & 41 & 49 & 46 & 59 & 63 & 52 & 66 & 62 \\
15 & \text{} & \text{} & \text{} & \text{} & \text{} & \text{} & \text{} & \text{} & \text{} & \text{} & \text{} & \text{} & \text{} & 3 & 3 & 3 & 4 & 5 & 4 & 5 & 6 & 7 & 7 & 7 & 9 & 10 & 10 & 11 & 13 & 14 & 8 & 9 & 18 & 14 & 14 & 16 & 19 & 21 & 16 & 18 & 22 & 25 & 20 & 23 & 28 & 31 & 27 & 25 & 37 & 36 & 32 & 31 & 39 & 44 & 36 & 36 & 46 & 47 & 40 & 42 & 62 \\ \hline
16 & \text{} & \text{} & \text{} & \text{} & \text{} & \text{} & \text{} & \text{} & \text{} & \text{} & \text{} & \text{} & \text{} & \text{} & 3 & 4 & 5 & 5 & 5 & 6 & 7 & 8 & 8 & 9 & 11 & 12 & 12 & 14 & 16 & 17 & 18 & 20 & 15 & 17 & 26 & 21 & 25 & 27 & 29 & 33 & 30 & 33 & 36 & 41 & 39 & 44 & 48 & 46 & 47 & 53 & 58 & 59 & 61 & 61 & 69 & 72 & 69 & 72 & 82 & 80 & 81 \\
17 & \text{} & \text{} & \text{} & \text{} & \text{} & \text{} & \text{} & \text{} & \text{} & \text{} & \text{} & \text{} & \text{} & \text{} & \text{} & 4 & 5 & 4 & 4 & 6 & 6 & 7 & 7 & 8 & 10 & 10 & 10 & 12 & 14 & 14 & 15 & 17 & 19 & 20 & 14 & 17 & 27 & 21 & 22 & 26 & 30 & 31 & 26 & 31 & 35 & 38 & 34 & 39 & 46 & 42 & 38 & 52 & 59 & 50 & 55 & 57 & 66 & 65 & 58 & 68 & 80 \\
18 & \text{} & \text{} & \text{} & \text{} & \text{} & \text{} & \text{} & \text{} & \text{} & \text{} & \text{} & \text{} & \text{} & \text{} & \text{} & \text{} & 5 & 5 & 5 & 6 & 7 & 8 & 8 & 9 & 11 & 12 & 12 & 14 & 16 & 17 & 18 & 20 & 23 & 25 & 26 & 29 & 24 & 26 & 37 & 32 & 37 & 40 & 43 & 48 & 45 & 50 & 54 & 60 & 60 & 66 & 71 & 71 & 72 & 80 & 88 & 90 & 94 & 96 & 106 & 111 & 110 \\
19 & \text{} & \text{} & \text{} & \text{} & \text{} & \text{} & \text{} & \text{} & \text{} & \text{} & \text{} & \text{} & \text{} & \text{} & \text{} & \text{} & \text{} & 5 & 5 & 5 & 6 & 8 & 7 & 8 & 10 & 11 & 11 & 12 & 14 & 15 & 16 & 17 & 20 & 22 & 22 & 24 & 28 & 30 & 23 & 26 & 38 & 33 & 35 & 38 & 43 & 48 & 42 & 46 & 54 & 59 & 54 & 60 & 68 & 67 & 64 & 71 & 82 & 90 & 88 & 82 & 103 \\ 
20 & \text{} & \text{} & \text{} & \text{} & \text{} & \text{} & \text{} & \text{} & \text{} & \text{} & \text{} & \text{} & \text{} & \text{} & \text{} & \text{} & \text{} & \text{} & 5 & 6 & 7 & 8 & 8 & 9 & 11 & 12 & 12 & 14 & 16 & 17 & 18 & 20 & 23 & 25 & 26 & 29 & 33 & 35 & 37 & 41 & 36 & 39 & 52 & 47 & 53 & 58 & 62 & 68 & 67 & 73 & 78 & 87 & 87 & 95 & 103 & 104 & 107 & 118 & 128 & 132 & 139 \\ \hline
21 & \text{} & \text{} & \text{} & \text{} & \text{} & \text{} & \text{} & \text{} & \text{} & \text{} & \text{} & \text{} & \text{} & \text{} & \text{} & \text{} & \text{} & \text{} & \text{} & 6 & 7 & 7 & 7 & 9 & 10 & 11 & 11 & 13 & 15 & 15 & 16 & 18 & 21 & 22 & 23 & 26 & 29 & 30 & 32 & 36 & 40 & 42 & 35 & 40 & 54 & 48 & 51 & 57 & 64 & 68 & 63 & 71 & 79 & 84 & 81 & 91 & 101 & 99 & 98 & 110 & 123 \\
22 & \text{} & \text{} & \text{} & \text{} & \text{} & \text{} & \text{} & \text{} & \text{} & \text{} & \text{} & \text{} & \text{} & \text{} & \text{} & \text{} & \text{} & \text{} & \text{} & \text{} & 7 & 8 & 8 & 9 & 11 & 12 & 12 & 14 & 16 & 17 & 18 & 20 & 23 & 25 & 26 & 29 & 33 & 35 & 37 & 41 & 46 & 49 & 52 & 57 & 52 & 57 & 72 & 67 & 76 & 82 & 87 & 96 & 95 & 103 & 111 & 122 & 124 & 135 & 145 & 148 & 154 \\
23 & \text{} & \text{} & \text{} & \text{} & \text{} & \text{} & \text{} & \text{} & \text{} & \text{} & \text{} & \text{} & \text{} & \text{} & \text{} & \text{} & \text{} & \text{} & \text{} & \text{} & \text{} & 8 & 8 & 8 & 10 & 12 & 11 & 13 & 15 & 16 & 17 & 18 & 21 & 23 & 24 & 26 & 30 & 32 & 33 & 36 & 41 & 44 & 46 & 50 & 55 & 60 & 53 & 57 & 75 & 71 & 74 & 81 & 90 & 97 & 93 & 101 & 112 & 122 & 119 & 129 & 144 \\
24 & \text{} & \text{} & \text{} & \text{} & \text{} & \text{} & \text{} & \text{} & \text{} & \text{} & \text{} & \text{} & \text{} & \text{} & \text{} & \text{} & \text{} & \text{} & \text{} & \text{} & \text{} & \text{} & 8 & 9 & 11 & 12 & 12 & 14 & 16 & 17 & 18 & 20 & 23 & 25 & 26 & 29 & 33 & 35 & 37 & 41 & 46 & 49 & 52 & 57 & 63 & 68 & 72 & 78 & 75 & 81 & 98 & 95 & 105 & 113 & 121 & 132 & 133 & 144 & 154 & 168 & 173 \\ 
25 & \text{} & \text{} & \text{} & \text{} & \text{} & \text{} & \text{} & \text{} & \text{} & \text{} & \text{} & \text{} & \text{} & \text{} & \text{} & \text{} & \text{} & \text{} & \text{} & \text{} & \text{} & \text{} & \text{} & 9 & 11 & 11 & 11 & 14 & 15 & 16 & 17 & 19 & 22 & 23 & 24 & 27 & 31 & 32 & 34 & 38 & 42 & 44 & 47 & 52 & 57 & 61 & 64 & 70 & 78 & 82 & 75 & 84 & 103 & 98 & 105 & 115 & 126 & 134 & 131 & 144 & 158 \\ \hline
26 & \text{} & \text{} & \text{} & \text{} & \text{} & \text{} & \text{} & \text{} & \text{} & \text{} & \text{} & \text{} & \text{} & \text{} & \text{} & \text{} & \text{} & \text{} & \text{} & \text{} & \text{} & \text{} & \text{} & \text{} & 11 & 12 & 12 & 14 & 16 & 17 & 18 & 20 & 23 & 25 & 26 & 29 & 33 & 35 & 37 & 41 & 46 & 49 & 52 & 57 & 63 & 68 & 72 & 78 & 87 & 93 & 98 & 107 & 104 & 112 & 133 & 131 & 144 & 155 & 165 & 179 & 183 \\
27 & \text{} & \text{} & \text{} & \text{} & \text{} & \text{} & \text{} & \text{} & \text{} & \text{} & \text{} & \text{} & \text{} & \text{} & \text{} & \text{} & \text{} & \text{} & \text{} & \text{} & \text{} & \text{} & \text{} & \text{} & \text{} & 12 & 12 & 13 & 15 & 17 & 17 & 19 & 22 & 24 & 25 & 27 & 31 & 33 & 35 & 38 & 43 & 46 & 48 & 52 & 58 & 63 & 66 & 71 & 79 & 85 & 89 & 96 & 105 & 113 & 107 & 116 & 140 & 138 & 146 & 157 & 172 \\
28 & \text{} & \text{} & \text{} & \text{} & \text{} & \text{} & \text{} & \text{} & \text{} & \text{} & \text{} & \text{} & \text{} & \text{} & \text{} & \text{} & \text{} & \text{} & \text{} & \text{} & \text{} & \text{} & \text{} & \text{} & \text{} & \text{} & 12 & 14 & 16 & 17 & 18 & 20 & 23 & 25 & 26 & 29 & 33 & 35 & 37 & 41 & 46 & 49 & 52 & 57 & 63 & 68 & 72 & 78 & 87 & 93 & 98 & 107 & 117 & 125 & 133 & 144 & 143 & 154 & 178 & 178 & 195 \\
29 & \text{} & \text{} & \text{} & \text{} & \text{} & \text{} & \text{} & \text{} & \text{} & \text{} & \text{} & \text{} & \text{} & \text{} & \text{} & \text{} & \text{} & \text{} & \text{} & \text{} & \text{} & \text{} & \text{} & \text{} & \text{} & \text{} & \text{} & 14 & 16 & 16 & 17 & 20 & 22 & 24 & 25 & 28 & 32 & 33 & 35 & 39 & 44 & 46 & 49 & 54 & 59 & 63 & 67 & 73 & 81 & 86 & 90 & 99 & 108 & 114 & 121 & 132 & 143 & 152 & 148 & 161 & 189 \\ \hline
30 & \text{} & \text{} & \text{} & \text{} & \text{} & \text{} & \text{} & \text{} & \text{} & \text{} & \text{} & \text{} & \text{} & \text{} & \text{} & \text{} & \text{} & \text{} & \text{} & \text{} & \text{} & \text{} & \text{} & \text{} & \text{} & \text{} & \text{} & \text{} & 16 & 17 & 18 & 20 & 23 & 25 & 26 & 29 & 33 & 35 & 37 & 41 & 46 & 49 & 52 & 57 & 63 & 68 & 72 & 78 & 87 & 93 & 98 & 107 & 117 & 125 & 133 & 144 & 157 & 168 & 178 & 192 & 194 \\
31 & \text{} & \text{} & \text{} & \text{} & \text{} & \text{} & \text{} & \text{} & \text{} & \text{} & \text{} & \text{} & \text{} & \text{} & \text{} & \text{} & \text{} & \text{} & \text{} & \text{} & \text{} & \text{} & \text{} & \text{} & \text{} & \text{} & \text{} & \text{} & \text{} & 17 & 18 & 19 & 22 & 25 & 25 & 28 & 32 & 34 & 36 & 39 & 44 & 47 & 50 & 54 & 60 & 65 & 68 & 73 & 82 & 88 & 92 & 100 & 109 & 117 & 124 & 133 & 145 & 156 & 164 & 176 & 192 \\
32 & \text{} & \text{} & \text{} & \text{} & \text{} & \text{} & \text{} & \text{} & \text{} & \text{} & \text{} & \text{} & \text{} & \text{} & \text{} & \text{} & \text{} & \text{} & \text{} & \text{} & \text{} & \text{} & \text{} & \text{} & \text{} & \text{} & \text{} & \text{} & \text{} & \text{} & 18 & 20 & 23 & 25 & 26 & 29 & 33 & 35 & 37 & 41 & 46 & 49 & 52 & 57 & 63 & 68 & 72 & 78 & 87 & 93 & 98 & 107 & 117 & 125 & 133 & 144 & 157 & 168 & 178 & 192 & 209 \\
33 & \text{} & \text{} & \text{} & \text{} & \text{} & \text{} & \text{} & \text{} & \text{} & \text{} & \text{} & \text{} & \text{} & \text{} & \text{} & \text{} & \text{} & \text{} & \text{} & \text{} & \text{} & \text{} & \text{} & \text{} & \text{} & \text{} & \text{} & \text{} & \text{} & \text{} & \text{} & 20 & 23 & 24 & 25 & 29 & 32 & 34 & 36 & 40 & 45 & 47 & 50 & 55 & 61 & 65 & 69 & 75 & 83 & 88 & 93 & 102 & 111 & 118 & 125 & 136 & 148 & 157 & 166 & 180 & 195 \\
34 & \text{} & \text{} & \text{} & \text{} & \text{} & \text{} & \text{} & \text{} & \text{} & \text{} & \text{} & \text{} & \text{} & \text{} & \text{} & \text{} & \text{} & \text{} & \text{} & \text{} & \text{} & \text{} & \text{} & \text{} & \text{} & \text{} & \text{} & \text{} & \text{} & \text{} & \text{} & \text{} & 23 & 25 & 26 & 29 & 33 & 35 & 37 & 41 & 46 & 49 & 52 & 57 & 63 & 68 & 72 & 78 & 87 & 93 & 98 & 107 & 117 & 125 & 133 & 144 & 157 & 168 & 178 & 192 & 209 \\ 
35 & \text{} & \text{} & \text{} & \text{} & \text{} & \text{} & \text{} & \text{} & \text{} & \text{} & \text{} & \text{} & \text{} & \text{} & \text{} & \text{} & \text{} & \text{} & \text{} & \text{} & \text{} & \text{} & \text{} & \text{} & \text{} & \text{} & \text{} & \text{} & \text{} & \text{} & \text{} & \text{} & \text{} & 25 & 26 & 28 & 32 & 35 & 36 & 40 & 45 & 48 & 51 & 55 & 61 & 66 & 70 & 75 & 84 & 90 & 94 & 102 & 112 & 120 & 127 & 137 & 149 & 160 & 169 & 181 & 197 \\ \hline
36 & \text{} & \text{} & \text{} & \text{} & \text{} & \text{} & \text{} & \text{} & \text{} & \text{} & \text{} & \text{} & \text{} & \text{} & \text{} & \text{} & \text{} & \text{} & \text{} & \text{} & \text{} & \text{} & \text{} & \text{} & \text{} & \text{} & \text{} & \text{} & \text{} & \text{} & \text{} & \text{} & \text{} & \text{} & 26 & 29 & 33 & 35 & 37 & 41 & 46 & 49 & 52 & 57 & 63 & 68 & 72 & 78 & 87 & 93 & 98 & 107 & 117 & 125 & 133 & 144 & 157 & 168 & 178 & 192 & 209 \\
37 & \text{} & \text{} & \text{} & \text{} & \text{} & \text{} & \text{} & \text{} & \text{} & \text{} & \text{} & \text{} & \text{} & \text{} & \text{} & \text{} & \text{} & \text{} & \text{} & \text{} & \text{} & \text{} & \text{} & \text{} & \text{} & \text{} & \text{} & \text{} & \text{} & \text{} & \text{} & \text{} & \text{} & \text{} & \text{} & 29 & 33 & 34 & 36 & 41 & 45 & 48 & 51 & 56 & 62 & 66 & 70 & 76 & 85 & 90 & 95 & 104 & 113 & 120 & 128 & 139 & 151 & 161 & 170 & 184 & 200 \\
38 & \text{} & \text{} & \text{} & \text{} & \text{} & \text{} & \text{} & \text{} & \text{} & \text{} & \text{} & \text{} & \text{} & \text{} & \text{} & \text{} & \text{} & \text{} & \text{} & \text{} & \text{} & \text{} & \text{} & \text{} & \text{} & \text{} & \text{} & \text{} & \text{} & \text{} & \text{} & \text{} & \text{} & \text{} & \text{} & \text{} & 33 & 35 & 37 & 41 & 46 & 49 & 52 & 57 & 63 & 68 & 72 & 78 & 87 & 93 & 98 & 107 & 117 & 125 & 133 & 144 & 157 & 168 & 178 & 192 & 209 \\
39 & \text{} & \text{} & \text{} & \text{} & \text{} & \text{} & \text{} & \text{} & \text{} & \text{} & \text{} & \text{} & \text{} & \text{} & \text{} & \text{} & \text{} & \text{} & \text{} & \text{} & \text{} & \text{} & \text{} & \text{} & \text{} & \text{} & \text{} & \text{} & \text{} & \text{} & \text{} & \text{} & \text{} & \text{} & \text{} & \text{} & \text{} & 35 & 37 & 40 & 45 & 49 & 51 & 56 & 62 & 67 & 71 & 76 & 85 & 91 & 96 & 104 & 114 & 122 & 129 & 139 & 152 & 163 & 172 & 185 & 201 \\
40 & \text{} & \text{} & \text{} & \text{} & \text{} & \text{} & \text{} & \text{} & \text{} & \text{} & \text{} & \text{} & \text{} & \text{} & \text{} & \text{} & \text{} & \text{} & \text{} & \text{} & \text{} & \text{} & \text{} & \text{} & \text{} & \text{} & \text{} & \text{} & \text{} & \text{} & \text{} & \text{} & \text{} & \text{} & \text{} & \text{} & \text{} & \text{} & 37 & 41 & 46 & 49 & 52 & 57 & 63 & 68 & 72 & 78 & 87 & 93 & 98 & 107 & 117 & 125 & 133 & 144 & 157 & 168 & 178 & 192 & 209 \\ \hline
41 & \text{} & \text{} & \text{} & \text{} & \text{} & \text{} & \text{} & \text{} & \text{} & \text{} & \text{} & \text{} & \text{} & \text{} & \text{} & \text{} & \text{} & \text{} & \text{} & \text{} & \text{} & \text{} & \text{} & \text{} & \text{} & \text{} & \text{} & \text{} & \text{} & \text{} & \text{} & \text{} & \text{} & \text{} & \text{} & \text{} & \text{} & \text{} & \text{} & 41 & 46 & 48 & 51 & 57 & 62 & 67 & 71 & 77 & 86 & 91 & 96 & 105 & 115 & 122 & 130 & 141 & 153 & 163 & 173 & 187 & 203 \\
42 & \text{} & \text{} & \text{} & \text{} & \text{} & \text{} & \text{} & \text{} & \text{} & \text{} & \text{} & \text{} & \text{} & \text{} & \text{} & \text{} & \text{} & \text{} & \text{} & \text{} & \text{} & \text{} & \text{} & \text{} & \text{} & \text{} & \text{} & \text{} & \text{} & \text{} & \text{} & \text{} & \text{} & \text{} & \text{} & \text{} & \text{} & \text{} & \text{} & \text{} & 46 & 49 & 52 & 57 & 63 & 68 & 72 & 78 & 87 & 93 & 98 & 107 & 117 & 125 & 133 & 144 & 157 & 168 & 178 & 192 & 209 \\
43 & \text{} & \text{} & \text{} & \text{} & \text{} & \text{} & \text{} & \text{} & \text{} & \text{} & \text{} & \text{} & \text{} & \text{} & \text{} & \text{} & \text{} & \text{} & \text{} & \text{} & \text{} & \text{} & \text{} & \text{} & \text{} & \text{} & \text{} & \text{} & \text{} & \text{} & \text{} & \text{} & \text{} & \text{} & \text{} & \text{} & \text{} & \text{} & \text{} & \text{} & \text{} & 49 & 52 & 56 & 62 & 68 & 71 & 77 & 86 & 92 & 97 & 105 & 115 & 123 & 131 & 141 & 154 & 165 & 174 & 187 & 204 \\
44 & \text{} & \text{} & \text{} & \text{} & \text{} & \text{} & \text{} & \text{} & \text{} & \text{} & \text{} & \text{} & \text{} & \text{} & \text{} & \text{} & \text{} & \text{} & \text{} & \text{} & \text{} & \text{} & \text{} & \text{} & \text{} & \text{} & \text{} & \text{} & \text{} & \text{} & \text{} & \text{} & \text{} & \text{} & \text{} & \text{} & \text{} & \text{} & \text{} & \text{} & \text{} & \text{} & 52 & 57 & 63 & 68 & 72 & 78 & 87 & 93 & 98 & 107 & 117 & 125 & 133 & 144 & 157 & 168 & 178 & 192 & 209 \\
45 & \text{} & \text{} & \text{} & \text{} & \text{} & \text{} & \text{} & \text{} & \text{} & \text{} & \text{} & \text{} & \text{} & \text{} & \text{} & \text{} & \text{} & \text{} & \text{} & \text{} & \text{} & \text{} & \text{} & \text{} & \text{} & \text{} & \text{} & \text{} & \text{} & \text{} & \text{} & \text{} & \text{} & \text{} & \text{} & \text{} & \text{} & \text{} & \text{} & \text{} & \text{} & \text{} & \text{} & 57 & 63 & 67 & 71 & 78 & 86 & 92 & 97 & 106 & 116 & 123 & 131 & 142 & 155 & 165 & 175 & 189 & 205 \\ \hline
46 & \text{} & \text{} & \text{} & \text{} & \text{} & \text{} & \text{} & \text{} & \text{} & \text{} & \text{} & \text{} & \text{} & \text{} & \text{} & \text{} & \text{} & \text{} & \text{} & \text{} & \text{} & \text{} & \text{} & \text{} & \text{} & \text{} & \text{} & \text{} & \text{} & \text{} & \text{} & \text{} & \text{} & \text{} & \text{} & \text{} & \text{} & \text{} & \text{} & \text{} & \text{} & \text{} & \text{} & \text{} & 63 & 68 & 72 & 78 & 87 & 93 & 98 & 107 & 117 & 125 & 133 & 144 & 157 & 168 & 178 & 192 & 209 \\
47 & \text{} & \text{} & \text{} & \text{} & \text{} & \text{} & \text{} & \text{} & \text{} & \text{} & \text{} & \text{} & \text{} & \text{} & \text{} & \text{} & \text{} & \text{} & \text{} & \text{} & \text{} & \text{} & \text{} & \text{} & \text{} & \text{} & \text{} & \text{} & \text{} & \text{} & \text{} & \text{} & \text{} & \text{} & \text{} & \text{} & \text{} & \text{} & \text{} & \text{} & \text{} & \text{} & \text{} & \text{} & \text{} & 68 & 72 & 77 & 86 & 93 & 97 & 106 & 116 & 124 & 132 & 142 & 155 & 166 & 176 & 189 & 206 \\
48 & \text{} & \text{} & \text{} & \text{} & \text{} & \text{} & \text{} & \text{} & \text{} & \text{} & \text{} & \text{} & \text{} & \text{} & \text{} & \text{} & \text{} & \text{} & \text{} & \text{} & \text{} & \text{} & \text{} & \text{} & \text{} & \text{} & \text{} & \text{} & \text{} & \text{} & \text{} & \text{} & \text{} & \text{} & \text{} & \text{} & \text{} & \text{} & \text{} & \text{} & \text{} & \text{} & \text{} & \text{} & \text{} & \text{} & 72 & 78 & 87 & 93 & 98 & 107 & 117 & 125 & 133 & 144 & 157 & 168 & 178 & 192 & 209 \\
49 & \text{} & \text{} & \text{} & \text{} & \text{} & \text{} & \text{} & \text{} & \text{} & \text{} & \text{} & \text{} & \text{} & \text{} & \text{} & \text{} & \text{} & \text{} & \text{} & \text{} & \text{} & \text{} & \text{} & \text{} & \text{} & \text{} & \text{} & \text{} & \text{} & \text{} & \text{} & \text{} & \text{} & \text{} & \text{} & \text{} & \text{} & \text{} & \text{} & \text{} & \text{} & \text{} & \text{} & \text{} & \text{} & \text{} & \text{} & 78 & 87 & 92 & 97 & 107 & 116 & 124 & 132 & 143 & 156 & 166 & 176 & 190 & 207 \\
50 & \text{} & \text{} & \text{} & \text{} & \text{} & \text{} & \text{} & \text{} & \text{} & \text{} & \text{} & \text{} & \text{} & \text{} & \text{} & \text{} & \text{} & \text{} & \text{} & \text{} & \text{} & \text{} & \text{} & \text{} & \text{} & \text{} & \text{} & \text{} & \text{} & \text{} & \text{} & \text{} & \text{} & \text{} & \text{} & \text{} & \text{} & \text{} & \text{} & \text{} & \text{} & \text{} & \text{} & \text{} & \text{} & \text{} & \text{} & \text{} & 87 & 93 & 98 & 107 & 117 & 125 & 133 & 144 & 157 & 168 & 178 & 192 & 209 \\ \hline
\end{tabular}
}
\caption{A table of values of $sc_t(n)$ for $0\leq n\leq 60$ and $2\leq t\leq n+2$.}
\end{sidewaystable} 

\begin{sidewaystable}
\vspace{6.5in}
{\tiny
\begin{tabular}{c|c@{\spa}c@{\spa}c@{\spa}c@{\spa}c@{\spa}c@{\spa}c@{\spa}c@{\spa}c@{\spa}c@{\spa}c@{\spa}c@{\spa}c@{\spa}c@{\spa}c@{\spa}c@{\spa}c@{\spa}c@{\spa}c@{\spa}c@{\spa}c@{\spa}c@{\spa}c@{\spa}c@{\spa}c@{\spa}c@{\spa}c@{\spa}c@{\spa}c@{\spa}c@{\spa}c@{\spa}c@{\spa}c@{\spa}c@{\spa}c@{\spa}c@{\spa}c@{\spa}c@{\spa}c@{\spa}c@{\spa}c@{\spa}c@{\spa}c@{\spa}c@{\spa}c@{\spa}c@{\spa}c@{\spa}c@{\spa}c@{\spa}c@{\spa}c@{\spa}c@{\spa}c@{\spa}c@{\spa}c@{\spa}c@{\spa}c@{\spa}c@{\spa}c@{\spa}c@{\spa}c@{\spa}} 
$_{t}\backslash^n$ & 4&5&6&7&8&9&10&11&12&13&14&15&16&17&18&19&20&21&22&23&24&25&26&27&28&29&30&31&32&33&34&35&36&37&38&39&40&41&42&43&44&45&46&47&48&49&50&51&52&53&54&55&56&57&58&59&60\\ \hline
$4\!-\!2$ &  0 & 0 & 0 & 0 & 2 & 2 & 0 & 2 & $-$1 & $-$1 & 2 & $-$1 & 2 & 2 & 1 & 2 & 1 & 1 & 1 & 3 & 1 & 1 & 3 & 0 & 0 & 2 & 2 & 2 & 1 & 0 & 1 & 2 & 4 & 4 & 1 & 3 & 0 & 2 & 3 & 1 & 4 & 1 & 3 & 1 & 2 & 2 & 1 & 4 & 3 & 3 & 2 & 1 & 3 & 3 & 2 & 2 & 1 \\
$6\!-\!4$ &  \text{} & \text{} & \text{} & \text{} & 0 & 0 & 0 & 0 & 3 & 3 & 0 & 3 & $-$1 & $-$1 & 3 & $-$1 & 2 & 2 & 2 & 2 & 1 & 1 & 1 & 4 & 3 & 3 & 3 & 2 & 3 & 6 & 4 & 5 & 2 & 1 & 6 & 3 & 3 & 2 & 5 & 3 & 1 & 3 & 2 & 5 & 10 & 8 & 2 & 7 & 3 & 6 & 7 & 3 & 8 & 8 & 8 & 5 & 10 \\
$8\!-\!6$ &  \text{} & \text{} & \text{} & \text{} & \text{} & \text{} & \text{} & \text{} & 0 & 0 & 0 & 0 & 4 & 4 & 0 & 4 & $-$1 & $-$1 & 4 & $-$1 & 3 & 3 & 3 & 3 & 2 & 2 & 2 & 6 & 3 & 3 & 5 & 2 & 1 & 5 & 5 & 4 & 10 & 9 & 4 & 12 & 8 & 7 & 11 & 7 & 4 & 7 & 14 & 6 & 10 & 6 & 9 & 13 & 6 & 6 & 8 & 9 & 11 \\
$10\!-\!8$ & \text{} & \text{} & \text{} & \text{} & \text{} & \text{} & \text{} & \text{} & \text{} & \text{} & \text{} & \text{} & 0 & 0 & 0 & 0 & 5 & 5 & 0 & 5 & $-$1 & $-$1 & 5 & $-$1 & 4 & 4 & 4 & 4 & 3 & 3 & 3 & 8 & 7 & 7 & 7 & 6 & 0 & 5 & 10 & 4 & 8 & 7 & 7 & 11 & 13 & 12 & 8 & 15 & 13 & 17 & 16 & 14 & 20 & 18 & 22 & 19 & 14 \\ 
$12\!-\!10$ & \text{} & \text{} & \text{} & \text{} & \text{} & \text{} & \text{} & \text{} & \text{} & \text{} & \text{} & \text{} & \text{} & \text{} & \text{} & \text{} & 0 & 0 & 0 & 0 & 6 & 6 & 0 & 6 & $-$1 & $-$1 & 6 & $-$1 & 5 & 5 & 5 & 5 & 4 & 4 & 4 & 10 & 9 & 9 & 9 & 8 & 7 & 13 & 13 & 12 & 8 & 7 & 16 & 12 & 10 & 9 & 15 & 13 & 22 & 27 & 12 & 30 & 24 \\ \hline
$14\!-\!12$ &  \text{} & \text{} & \text{} & \text{} & \text{} & \text{} & \text{} & \text{} & \text{} & \text{} & \text{} & \text{} & \text{} & \text{} & \text{} & \text{} & \text{} & \text{} & \text{} & \text{} & 0 & 0 & 0 & 0 & 7 & 7 & 0 & 7 & $-$1 & $-$1 & 7 & $-$1 & 6 & 6 & 6 & 6 & 5 & 5 & 5 & 12 & 11 & 11 & 11 & 10 & 9 & 16 & 16 & 15 & 21 & 20 & 20 & 26 & 10 & 9 & 30 & 14 & 19 \\
$16\!-\!14$ & \text{} & \text{} & \text{} & \text{} & \text{} & \text{} & \text{} & \text{} & \text{} & \text{} & \text{} & \text{} & \text{} & \text{} & \text{} & \text{} & \text{} & \text{} & \text{} & \text{} & \text{} & \text{} & \text{} & \text{} & 0 & 0 & 0 & 0 & 8 & 8 & 0 & 8 & $-$1 & $-$1 & 8 & $-$1 & 7 & 7 & 7 & 7 & 6 & 6 & 6 & 14 & 13 & 13 & 13 & 12 & 11 & 19 & 19 & 18 & 25 & 24 & 24 & 31 & 29 \\
$18\!-\!16$ &  \text{} & \text{} & \text{} & \text{} & \text{} & \text{} & \text{} & \text{} & \text{} & \text{} & \text{} & \text{} & \text{} & \text{} & \text{} & \text{} & \text{} & \text{} & \text{} & \text{} & \text{} & \text{} & \text{} & \text{} & \text{} & \text{} & \text{} & \text{} & 0 & 0 & 0 & 0 & 9 & 9 & 0 & 9 & $-$1 & $-$1 & 9 & $-$1 & 8 & 8 & 8 & 8 & 7 & 7 & 7 & 16 & 15 & 15 & 15 & 14 & 13 & 22 & 22 & 21 & 29 \\
$20\!-\!18$ & \text{} & \text{} & \text{} & \text{} & \text{} & \text{} & \text{} & \text{} & \text{} & \text{} & \text{} & \text{} & \text{} & \text{} & \text{} & \text{} & \text{} & \text{} & \text{} & \text{} & \text{} & \text{} & \text{} & \text{} & \text{} & \text{} & \text{} & \text{} & \text{} & \text{} & \text{} & \text{} & 0 & 0 & 0 & 0 & 10 & 10 & 0 & 10 & $-$1 & $-$1 & 10 & $-$1 & 9 & 9 & 9 & 9 & 8 & 8 & 8 & 18 & 17 & 17 & 17 & 16 & 15 \\ 
$22\!-\!20$ &  \text{} & \text{} & \text{} & \text{} & \text{} & \text{} & \text{} & \text{} & \text{} & \text{} & \text{} & \text{} & \text{} & \text{} & \text{} & \text{} & \text{} & \text{} & \text{} & \text{} & \text{} & \text{} & \text{} & \text{} & \text{} & \text{} & \text{} & \text{} & \text{} & \text{} & \text{} & \text{} & \text{} & \text{} & \text{} & \text{} & 0 & 0 & 0 & 0 & 11 & 11 & 0 & 11 & $-$1 & $-$1 & 11 & $-$1 & 10 & 10 & 10 & 10 & 9 & 9 & 9 & 20 & 19 \\ \hline
$24\!-\!22$ & \text{} & \text{} & \text{} & \text{} & \text{} & \text{} & \text{} & \text{} & \text{} & \text{} & \text{} & \text{} & \text{} & \text{} & \text{} & \text{} & \text{} & \text{} & \text{} & \text{} & \text{} & \text{} & \text{} & \text{} & \text{} & \text{} & \text{} & \text{} & \text{} & \text{} & \text{} & \text{} & \text{} & \text{} & \text{} & \text{} & \text{} & \text{} & \text{} & \text{} & 0 & 0 & 0 & 0 & 12 & 12 & 0 & 12 & $-$1 & $-$1 & 12 & $-$1 & 11 & 11 & 11 & 11 & 10 \\
$26\!-\!24$ & \text{} & \text{} & \text{} & \text{} & \text{} & \text{} & \text{} & \text{} & \text{} & \text{} & \text{} & \text{} & \text{} & \text{} & \text{} & \text{} & \text{} & \text{} & \text{} & \text{} & \text{} & \text{} & \text{} & \text{} & \text{} & \text{} & \text{} & \text{} & \text{} & \text{} & \text{} & \text{} & \text{} & \text{} & \text{} & \text{} & \text{} & \text{} & \text{} & \text{} & \text{} & \text{} & \text{} & \text{} & 0 & 0 & 0 & 0 & 13 & 13 & 0 & 13 & $-$1 & $-$1 & 13 & $-$1 & 12 \\
$28\!-\!26$ &  \text{} & \text{} & \text{} & \text{} & \text{} & \text{} & \text{} & \text{} & \text{} & \text{} & \text{} & \text{} & \text{} & \text{} & \text{} & \text{} & \text{} & \text{} & \text{} & \text{} & \text{} & \text{} & \text{} & \text{} & \text{} & \text{} & \text{} & \text{} & \text{} & \text{} & \text{} & \text{} & \text{} & \text{} & \text{} & \text{} & \text{} & \text{} & \text{} & \text{} & \text{} & \text{} & \text{} & \text{} & \text{} & \text{} & \text{} & \text{} & 0 & 0 & 0 & 0 & 14 & 14 & 0 & 14 & $-$1 \\
$30\!-\!28$ &  \text{} & \text{} & \text{} & \text{} & \text{} & \text{} & \text{} & \text{} & \text{} & \text{} & \text{} & \text{} & \text{} & \text{} & \text{} & \text{} & \text{} & \text{} & \text{} & \text{} & \text{} & \text{} & \text{} & \text{} & \text{} & \text{} & \text{} & \text{} & \text{} & \text{} & \text{} & \text{} & \text{} & \text{} & \text{} & \text{} & \text{} & \text{} & \text{} & \text{} & \text{} & \text{} & \text{} & \text{} & \text{} & \text{} & \text{} & \text{} & \text{} & \text{} & \text{} & \text{} & 0 & 0 & 0 & 0 & 15 \\
$32\!-\!30$ & \text{} & \text{} & \text{} & \text{} & \text{} & \text{} & \text{} & \text{} & \text{} & \text{} & \text{} & \text{} & \text{} & \text{} & \text{} & \text{} & \text{} & \text{} & \text{} & \text{} & \text{} & \text{} & \text{} & \text{} & \text{} & \text{} & \text{} & \text{} & \text{} & \text{} & \text{} & \text{} & \text{} & \text{} & \text{} & \text{} & \text{} & \text{} & \text{} & \text{} & \text{} & \text{} & \text{} & \text{} & \text{} & \text{} & \text{} & \text{} & \text{} & \text{} & \text{} & \text{} & \text{} & \text{} & \text{} & \text{} & 0 \\ \hline
\end{tabular}

\bigskip\bigskip
\begin{tabular}{c|c@{\spa}c@{\spa}c@{\spa}c@{\spa}c@{\spa}c@{\spa}c@{\spa}c@{\spa}c@{\spa}c@{\spa}c@{\spa}c@{\spa}c@{\spa}c@{\spa}c@{\spa}c@{\spa}c@{\spa}c@{\spa}c@{\spa}c@{\spa}c@{\spa}c@{\spa}c@{\spa}c@{\spa}c@{\spa}c@{\spa}c@{\spa}c@{\spa}c@{\spa}c@{\spa}c@{\spa}c@{\spa}c@{\spa}c@{\spa}c@{\spa}c@{\spa}c@{\spa}c@{\spa}c@{\spa}c@{\spa}c@{\spa}c@{\spa}c@{\spa}c@{\spa}c@{\spa}c@{\spa}c@{\spa}c@{\spa}c@{\spa}c@{\spa}c@{\spa}c@{\spa}c@{\spa}c@{\spa}c@{\spa}c@{\spa}c@{\spa}c@{\spa}c@{\spa}c@{\spa}c} 
$_{t}\backslash^n$ &3&4&5&6&7&8&9&10&11&12&13&14&15&16&17&18&19&20&21&22&23&24&25&26&27&28&29&30&31&32&33&34&35&36&37&38&39&40&41&42&43&44&45&46&47&48&49&50&51&52&53&54&55&56&57&58&59&60\\ \hline
$5\!-\!3$ & 1 & 1 & $-$1 & 0 & 1 & 0 & 1 & 0 & 0 & 2 & 0 & 0 & 1 & 1 & 1 & 0 & 1 & 0 & $-$1 & 0 & 0 & 1 & 2 & 0 & 0 & 2 & 0 & 0 & 1 & 0 & 1 & 0 & 1 & 2 & 0 & 0 & 1 & 1 & 0 & 0 & 0 & 1 & 0 & 0 & 0 & 1 & 1 & 0 & 2 & 2 & 0 & 0 & 0 & $-$1 & 2 & 0 & 0 & 2 \\
$7\!-\!5$ &  0 & 0 & 1 & 1 & $-$1 & 0 & 1 & 1 & 1 & 0 & 2 & 0 & $-$1 & 1 & 0 & 1 & 0 & 2 & 4 & 1 & 0 & 2 & 2 & 1 & 2 & 0 & 2 & 1 & $-$1 & 2 & 1 & 0 & 1 & 3 & 2 & 1 & $-$1 & 1 & 2 & 2 & 3 & 1 & 6 & 1 & 0 & 4 & 1 & 1 & 2 & 2 & 2 & 2 & 0 & 4 & 3 & 2 & 1 & 4 \\
$9\!-\!7$ & \text{} & \text{} & 0 & 0 & 1 & 1 & $-$1 & 0 & 1 & 0 & 0 & 2 & 3 & 1 & 2 & $-$1 & 0 & 3 & $-$2 & 1 & 3 & 1 & 0 & 0 & 3 & 4 & 2 & 2 & 5 & 5 & 1 & 1 & 4 & 3 & 3 & 2 & 5 & 5 & 3 & 0 & 2 & 6 & 0 & 3 & 4 & 6 & 4 & 0 & 1 & 5 & 3 & 4 & 11 & 4 & 4 & 0 & 7 & 8 \\
$11\!-\!9$ & \text{} & \text{} & \text{} & \text{} & 0 & 0 & 1 & 1 & $-$1 & 0 & 1 & 0 & 0 & 0 & 1 & 4 & 3 & 0 & 4 & 0 & $-$1 & 4 & 1 & 3 & 0 & 1 & 4 & 1 & 0 & 0 & 8 & 7 & 0 & 4 & 5 & 3 & 3 & 4 & 8 & 8 & 3 & 6 & 11 & 4 & 4 & 7 & 11 & 9 & 5 & 9 & 13 & 6 & 0 & 12 & 10 & 10 & 3 & 8 \\ \hline
$13\!-\!11$ & \text{} & \text{} & \text{} & \text{} & \text{} & \text{} & 0 & 0 & 1 & 1 & $-$1 & 0 & 1 & 0 & 0 & 0 & 1 & 1 & 0 & 4 & 5 & 1 & 4 & 0 & 1 & 5 & $-$1 & 4 & 5 & 5 & 1 & 0 & 5 & 2 & 4 & 4 & 10 & 9 & 0 & 4 & 10 & 8 & 2 & 8 & 12 & 8 & 6 & 6 & 12 & 15 & 8 & 8 & 21 & 15 & 11 & 12 & 17 & 21 \\
$15\!-\!13$ & \text{} & \text{} & \text{} & \text{} & \text{} & \text{} & \text{} & \text{} & 0 & 0 & 1 & 1 & $-$1 & 0 & 1 & 0 & 0 & 0 & 1 & 1 & 0 & 0 & 1 & 6 & 5 & 1 & 7 & 0 & $-$1 & 6 & 1 & 6 & 5 & 5 & 7 & 6 & 0 & 1 & 12 & 6 & 5 & 6 & 12 & 11 & 5 & 11 & 13 & 16 & 9 & 6 & 18 & 16 & 4 & 11 & 17 & 16 & 14 & 19 \\
$17\!-\!15$ & \text{} & \text{} & \text{} & \text{} & \text{} & \text{} & \text{} & \text{} & \text{} & \text{} & 0 & 0 & 1 & 1 & $-$1 & 0 & 1 & 0 & 0 & 0 & 1 & 1 & 0 & 0 & 1 & 1 & 0 & 7 & 8 & 1 & 6 & 0 & 1 & 8 & 0 & 6 & 8 & 8 & 6 & 6 & 8 & 7 & 7 & 7 & 14 & 9 & 6 & 6 & 21 & 20 & 6 & 19 & 21 & 20 & 18 & 18 & 26 & 18 \\
$19\!-\!17$ &  \text{} & \text{} & \text{} & \text{} & \text{} & \text{} & \text{} & \text{} & \text{} & \text{} & \text{} & \text{} & 0 & 0 & 1 & 1 & $-$1 & 0 & 1 & 0 & 0 & 0 & 1 & 1 & 0 & 0 & 1 & 1 & 0 & 1 & 2 & 8 & 7 & 1 & 9 & 1 & 0 & 8 & 2 & 9 & 7 & 8 & 10 & 8 & 7 & 8 & 17 & 16 & 8 & 9 & 17 & 9 & 14 & 16 & 25 & 30 & 14 & 23 \\
$21\!-\!19$ & \text{} & \text{} & \text{} & \text{} & \text{} & \text{} & \text{} & \text{} & \text{} & \text{} & \text{} & \text{} & \text{} & \text{} & 0 & 0 & 1 & 1 & $-$1 & 0 & 1 & 0 & 0 & 0 & 1 & 1 & 0 & 0 & 1 & 1 & 0 & 1 & 2 & 1 & 0 & 9 & 10 & 2 & 9 & 0 & 2 & 11 & 0 & 9 & 11 & 10 & 9 & 9 & 11 & 11 & 17 & 17 & 20 & 19 & 9 & 10 & 28 & 20 \\ \hline
$23\!-\!21$ & \text{} & \text{} & \text{} & \text{} & \text{} & \text{} & \text{} & \text{} & \text{} & \text{} & \text{} & \text{} & \text{} & \text{} & \text{} & \text{} & 0 & 0 & 1 & 1 & $-$1 & 0 & 1 & 0 & 0 & 0 & 1 & 1 & 0 & 0 & 1 & 1 & 0 & 1 & 2 & 1 & 0 & 1 & 2 & 11 & 10 & 1 & 12 & 2 & 0 & 11 & 3 & 11 & 10 & 11 & 13 & 12 & 10 & 11 & 23 & 21 & 19 & 21 \\
$25\!-\!23$ & \text{} & \text{} & \text{} & \text{} & \text{} & \text{} & \text{} & \text{} & \text{} & \text{} & \text{} & \text{} & \text{} & \text{} & \text{} & \text{} & \text{} & \text{} & 0 & 0 & 1 & 1 & $-$1 & 0 & 1 & 0 & 0 & 0 & 1 & 1 & 0 & 0 & 1 & 1 & 0 & 1 & 2 & 1 & 0 & 1 & 2 & 2 & 1 & 11 & 13 & 3 & 11 & 1 & 3 & 13 & 1 & 12 & 14 & 14 & 12 & 12 & 15 & 14 \\
$27\!-\!25$ & \text{} & \text{} & \text{} & \text{} & \text{} & \text{} & \text{} & \text{} & \text{} & \text{} & \text{} & \text{} & \text{} & \text{} & \text{} & \text{} & \text{} & \text{} & \text{} & \text{} & 0 & 0 & 1 & 1 & $-$1 & 0 & 1 & 0 & 0 & 0 & 1 & 1 & 0 & 0 & 1 & 1 & 0 & 1 & 2 & 1 & 0 & 1 & 2 & 2 & 1 & 1 & 3 & 14 & 12 & 2 & 15 & 2 & 1 & 14 & 4 & 15 & 13 & 14 \\
$29\!-\!27$ & \text{} & \text{} & \text{} & \text{} & \text{} & \text{} & \text{} & \text{} & \text{} & \text{} & \text{} & \text{} & \text{} & \text{} & \text{} & \text{} & \text{} & \text{} & \text{} & \text{} & \text{} & \text{} & 0 & 0 & 1 & 1 & $-$1 & 0 & 1 & 0 & 0 & 0 & 1 & 1 & 0 & 0 & 1 & 1 & 0 & 1 & 2 & 1 & 0 & 1 & 2 & 2 & 1 & 1 & 3 & 3 & 1 & 14 & 16 & 3 & 14 & 2 & 4 & 17 \\
$31\!-\!29$ & \text{} & \text{} & \text{} & \text{} & \text{} & \text{} & \text{} & \text{} & \text{} & \text{} & \text{} & \text{} & \text{} & \text{} & \text{} & \text{} & \text{} & \text{} & \text{} & \text{} & \text{} & \text{} & \text{} & \text{} & 0 & 0 & 1 & 1 & $-$1 & 0 & 1 & 0 & 0 & 0 & 1 & 1 & 0 & 0 & 1 & 1 & 0 & 1 & 2 & 1 & 0 & 1 & 2 & 2 & 1 & 1 & 3 & 3 & 1 & 2 & 4 & 16 & 15 & 3 \\ \hline
$33\!-\!31$ & \text{} & \text{} & \text{} & \text{} & \text{} & \text{} & \text{} & \text{} & \text{} & \text{} & \text{} & \text{} & \text{} & \text{} & \text{} & \text{} & \text{} & \text{} & \text{} & \text{} & \text{} & \text{} & \text{} & \text{} & \text{} & \text{} & 0 & 0 & 1 & 1 & $-$1 & 0 & 1 & 0 & 0 & 0 & 1 & 1 & 0 & 0 & 1 & 1 & 0 & 1 & 2 & 1 & 0 & 1 & 2 & 2 & 1 & 1 & 3 & 3 & 1 & 2 & 4 & 3 \\
$35\!-\!33$ & \text{} & \text{} & \text{} & \text{} & \text{} & \text{} & \text{} & \text{} & \text{} & \text{} & \text{} & \text{} & \text{} & \text{} & \text{} & \text{} & \text{} & \text{} & \text{} & \text{} & \text{} & \text{} & \text{} & \text{} & \text{} & \text{} & \text{} & \text{} & 0 & 0 & 1 & 1 & $-$1 & 0 & 1 & 0 & 0 & 0 & 1 & 1 & 0 & 0 & 1 & 1 & 0 & 1 & 2 & 1 & 0 & 1 & 2 & 2 & 1 & 1 & 3 & 3 & 1 & 2 \\
$37\!-\!35$ & \text{} & \text{} & \text{} & \text{} & \text{} & \text{} & \text{} & \text{} & \text{} & \text{} & \text{} & \text{} & \text{} & \text{} & \text{} & \text{} & \text{} & \text{} & \text{} & \text{} & \text{} & \text{} & \text{} & \text{} & \text{} & \text{} & \text{} & \text{} & \text{} & \text{} & 0 & 0 & 1 & 1 & $-$1 & 0 & 1 & 0 & 0 & 0 & 1 & 1 & 0 & 0 & 1 & 1 & 0 & 1 & 2 & 1 & 0 & 1 & 2 & 2 & 1 & 1 & 3 & 3 \\
$39\!-\!37$ & \text{} & \text{} & \text{} & \text{} & \text{} & \text{} & \text{} & \text{} & \text{} & \text{} & \text{} & \text{} & \text{} & \text{} & \text{} & \text{} & \text{} & \text{} & \text{} & \text{} & \text{} & \text{} & \text{} & \text{} & \text{} & \text{} & \text{} & \text{} & \text{} & \text{} & \text{} & \text{} & 0 & 0 & 1 & 1 & $-$1 & 0 & 1 & 0 & 0 & 0 & 1 & 1 & 0 & 0 & 1 & 1 & 0 & 1 & 2 & 1 & 0 & 1 & 2 & 2 & 1 & 1 \\
$41\!-\!39$ & \text{} & \text{} & \text{} & \text{} & \text{} & \text{} & \text{} & \text{} & \text{} & \text{} & \text{} & \text{} & \text{} & \text{} & \text{} & \text{} & \text{} & \text{} & \text{} & \text{} & \text{} & \text{} & \text{} & \text{} & \text{} & \text{} & \text{} & \text{} & \text{} & \text{} & \text{} & \text{} & \text{} & \text{} & 0 & 0 & 1 & 1 & $-$1 & 0 & 1 & 0 & 0 & 0 & 1 & 1 & 0 & 0 & 1 & 1 & 0 & 1 & 2 & 1 & 0 & 1 & 2 & 2 \\ \hline
$43\!-\!41$ & \text{} & \text{} & \text{} & \text{} & \text{} & \text{} & \text{} & \text{} & \text{} & \text{} & \text{} & \text{} & \text{} & \text{} & \text{} & \text{} & \text{} & \text{} & \text{} & \text{} & \text{} & \text{} & \text{} & \text{} & \text{} & \text{} & \text{} & \text{} & \text{} & \text{} & \text{} & \text{} & \text{} & \text{} & \text{} & \text{} & 0 & 0 & 1 & 1 & $-$1 & 0 & 1 & 0 & 0 & 0 & 1 & 1 & 0 & 0 & 1 & 1 & 0 & 1 & 2 & 1 & 0 & 1 \\
$45\!-\!43$ & \text{} & \text{} & \text{} & \text{} & \text{} & \text{} & \text{} & \text{} & \text{} & \text{} & \text{} & \text{} & \text{} & \text{} & \text{} & \text{} & \text{} & \text{} & \text{} & \text{} & \text{} & \text{} & \text{} & \text{} & \text{} & \text{} & \text{} & \text{} & \text{} & \text{} & \text{} & \text{} & \text{} & \text{} & \text{} & \text{} & \text{} & \text{} & 0 & 0 & 1 & 1 & $-$1 & 0 & 1 & 0 & 0 & 0 & 1 & 1 & 0 & 0 & 1 & 1 & 0 & 1 & 2 & 1 \\
$47\!-\!45$ & \text{} & \text{} & \text{} & \text{} & \text{} & \text{} & \text{} & \text{} & \text{} & \text{} & \text{} & \text{} & \text{} & \text{} & \text{} & \text{} & \text{} & \text{} & \text{} & \text{} & \text{} & \text{} & \text{} & \text{} & \text{} & \text{} & \text{} & \text{} & \text{} & \text{} & \text{} & \text{} & \text{} & \text{} & \text{} & \text{} & \text{} & \text{} & \text{} & \text{} & 0 & 0 & 1 & 1 & $-$1 & 0 & 1 & 0 & 0 & 0 & 1 & 1 & 0 & 0 & 1 & 1 & 0 & 1 \\
$49\!-\!47$ & \text{} & \text{} & \text{} & \text{} & \text{} & \text{} & \text{} & \text{} & \text{} & \text{} & \text{} & \text{} & \text{} & \text{} & \text{} & \text{} & \text{} & \text{} & \text{} & \text{} & \text{} & \text{} & \text{} & \text{} & \text{} & \text{} & \text{} & \text{} & \text{} & \text{} & \text{} & \text{} & \text{} & \text{} & \text{} & \text{} & \text{} & \text{} & \text{} & \text{} & \text{} & \text{} & 0 & 0 & 1 & 1 & $-$1 & 0 & 1 & 0 & 0 & 0 & 1 & 1 & 0 & 0 & 1 & 1 \\
$51\!-\!49$ & \text{} & \text{} & \text{} & \text{} & \text{} & \text{} & \text{} & \text{} & \text{} & \text{} & \text{} & \text{} & \text{} & \text{} & \text{} & \text{} & \text{} & \text{} & \text{} & \text{} & \text{} & \text{} & \text{} & \text{} & \text{} & \text{} & \text{} & \text{} & \text{} & \text{} & \text{} & \text{} & \text{} & \text{} & \text{} & \text{} & \text{} & \text{} & \text{} & \text{} & \text{} & \text{} & \text{} & \text{} & 0 & 0 & 1 & 1 & $-$1 & 0 & 1 & 0 & 0 & 0 & 1 & 1 & 0 & 0 \\ \hline
$53\!-\!51$ & \text{} & \text{} & \text{} & \text{} & \text{} & \text{} & \text{} & \text{} & \text{} & \text{} & \text{} & \text{} & \text{} & \text{} & \text{} & \text{} & \text{} & \text{} & \text{} & \text{} & \text{} & \text{} & \text{} & \text{} & \text{} & \text{} & \text{} & \text{} & \text{} & \text{} & \text{} & \text{} & \text{} & \text{} & \text{} & \text{} & \text{} & \text{} & \text{} & \text{} & \text{} & \text{} & \text{} & \text{} & \text{} & \text{} & 0 & 0 & 1 & 1 & $-$1 & 0 & 1 & 0 & 0 & 0 & 1 & 1 \\
$55\!-\!53$ & \text{} & \text{} & \text{} & \text{} & \text{} & \text{} & \text{} & \text{} & \text{} & \text{} & \text{} & \text{} & \text{} & \text{} & \text{} & \text{} & \text{} & \text{} & \text{} & \text{} & \text{} & \text{} & \text{} & \text{} & \text{} & \text{} & \text{} & \text{} & \text{} & \text{} & \text{} & \text{} & \text{} & \text{} & \text{} & \text{} & \text{} & \text{} & \text{} & \text{} & \text{} & \text{} & \text{} & \text{} & \text{} & \text{} & \text{} & \text{} & 0 & 0 & 1 & 1 & $-$1 & 0 & 1 & 0 & 0 & 0 \\
$57\!-\!55$ & \text{} & \text{} & \text{} & \text{} & \text{} & \text{} & \text{} & \text{} & \text{} & \text{} & \text{} & \text{} & \text{} & \text{} & \text{} & \text{} & \text{} & \text{} & \text{} & \text{} & \text{} & \text{} & \text{} & \text{} & \text{} & \text{} & \text{} & \text{} & \text{} & \text{} & \text{} & \text{} & \text{} & \text{} & \text{} & \text{} & \text{} & \text{} & \text{} & \text{} & \text{} & \text{} & \text{} & \text{} & \text{} & \text{} & \text{} & \text{} & \text{} & \text{} & 0 & 0 & 1 & 1 & $-$1 & 0 & 1 & 0 \\
$59\!-\!57$ & \text{} & \text{} & \text{} & \text{} & \text{} & \text{} & \text{} & \text{} & \text{} & \text{} & \text{} & \text{} & \text{} & \text{} & \text{} & \text{} & \text{} & \text{} & \text{} & \text{} & \text{} & \text{} & \text{} & \text{} & \text{} & \text{} & \text{} & \text{} & \text{} & \text{} & \text{} & \text{} & \text{} & \text{} & \text{} & \text{} & \text{} & \text{} & \text{} & \text{} & \text{} & \text{} & \text{} & \text{} & \text{} & \text{} & \text{} & \text{} & \text{} & \text{} & \text{} & \text{} & 0 & 0 & 1 & 1 & $-$1 & 0 \\
$61\!-\!59$ & \text{} & \text{} & \text{} & \text{} & \text{} & \text{} & \text{} & \text{} & \text{} & \text{} & \text{} & \text{} & \text{} & \text{} & \text{} & \text{} & \text{} & \text{} & \text{} & \text{} & \text{} & \text{} & \text{} & \text{} & \text{} & \text{} & \text{} & \text{} & \text{} & \text{} & \text{} & \text{} & \text{} & \text{} & \text{} & \text{} & \text{} & \text{} & \text{} & \text{} & \text{} & \text{} & \text{} & \text{} & \text{} & \text{} & \text{} & \text{} & \text{} & \text{} & \text{} & \text{} & \text{} & \text{} & 0 & 0 & 1 & 1 \\ \hline
\end{tabular}
}
\caption{Table of values of $sc_{2t+2}(n)-sc_{2t}(n)$ and $sc_{2t+3}(n)-sc_{2t+1}(n)$.}
\end{sidewaystable}


\bibliographystyle{alpha}

\begin{thebibliography}{1}

\bibitem[And02]{And02}
Jaclyn Anderson.  
\newblock Partitions which are simultaneously $t_1$- and $t_2$-core.  
\newblock {\em Disc.\ Math.}, 248:237--243, 2002.

\bibitem[And08]{Anderson}
Jaclyn Anderson.
\newblock An asymptotic formula for the {$t$}-core partition function and a conjecture of Stanton.
\newblock {\em J.\ Number Theory}, 128:2591--2615, 2008.

\bibitem[BDFKS06]{Baldwin}
John Baldwin, Melissa Depweg, Ben Ford, Abraham Kunin, and Lawrence Sze.
\newblock Self-conjugate {$t$}-core partitions, sums of squares, and {$p$}-blocks of {$A_n$}.
\newblock {\em J.\ Algebra}, 297(2):438--452, 2006.

\bibitem[BB07]{BB}
Nayandeep Deka Baruah and Bruce C.\ Berndt.
\newblock Partition identities and Ramanujan's modular equations
\newblock {\em J.\ Combin.\ Theory Ser.\ A}, 114(6):1024--1045, 2007. 

\bibitem[BY08]{BY}
Alexander Berkovich and Hamza Yesilyurt.
\newblock New identities for $7$-cores with prescribed BG-rank.
\newblock {\em Discrete Math.}, 308(22):5246--5259, 2008.


\bibitem[Bre94]{Unimo}
Francesco Brenti.
\newblock Log-concave and unimodal sequences in algebra, combinatorics, and geometry: an update. 
\newblock 
In {\em Jerusalem combinatorics '93}, vol.\ 178 of {\em Contemp.\ Math.\ } Amer.\ Math.\ Soc., Providence, RI, 1994. 

\bibitem[Cra06]{Craven}
David A.\ Craven.
\newblock The number of $t$-cores of size $n$.
\newblock preprint, 2006, published electronically at \newline \url{http://people.maths.ox.ac.uk/craven/docs/papers/tcores0608.pdf}.

\bibitem[FMS09]{Ford}
Ben Ford, Hong Mai, and Lawrence Sze.
\newblock Self-conjugate simultaneous $p$- and $q$-core paritions and blocks of $A_n$.
\newblock {\em J.\ Number Theory}, 129(4):858--865, 2009.


\bibitem[GKS90]{Garvan}
Frank Garvan, Dongsu Kim, and Dennis Stanton.
\newblock Cranks and $t$-cores.
\newblock {\em Invent.\ Math.}, 101(1):1--17, 1990.

\bibitem[GO96]{GranOno}
Andrew Granville and Ken Ono.
\newblock Defect zero $p$-blocks for finite simple groups.
\newblock {\em Trans.\ Amer.\ Math.\ Soc.}, 348:331--347, 1996.

\bibitem[FRT54]{Frame}
Frame, J.\ S., Robinson, G.\ de B.\, and Thrall, R.\ M.
\newblock The hook graphs of the symmetric group,
\newblock {\em Canad.\ J.\ Math.} 6:316--324, 1954.


\bibitem[HJ12]{HJ2}
Christopher R.\ H.\ Hanusa and Brant C.\ Jones.
\newblock Abacus models for parabolic quotients of affine Weyl groups.
\newblock {\em J.\ Algebra}, 361:134--162, 2012.

\bibitem[JK81]{JamesKerber}
Gordon James and Adalbert Kerber.
\newblock {\em The representation theory of the symmetric group.}
\newblock Encyclopedia of Mathematics and its Applications, 16, 
\newblock Addison-Wesley Publishing Co., Reading, Mass., 1981.

\bibitem[KR12]{KimRouse}
Byungchan Kim and Jeremy Rouse.
\newblock Explicit bounds for the number of $p$-core partitions.
\newblock In press, {\em Trans.\ Amer.\ Math.\ Soc.}

\bibitem [Nat09a]{Nath0}
Rishi Nath. 
\newblock The Navarro conjecture for alternating groups, $p=2$.
\newblock {\em J.\ Algebra Appl.}, 6:837--844, 2009.

\bibitem[Nat09b]{Nath}
Rishi Nath.
\newblock On diagonal hooks of self-conjugate partitions.
\newblock Preprint, 2009. arXiv:\;0903.2494 (math.CO)


\bibitem[OEIS]{oeis}
The On-Line Encyclopedia of Integer Sequences, published electronically at {\tt http://oeis.org}, 2012. 


\bibitem[Ols93]{Olsson1}
J\o rn Olsson.
\newblock {\em Combinatorics and representations for finite groups}.
\newblock Vorlesungen aus dem FB Mathematik der Univ Essen, Heft 20, 1993.

\bibitem[Ols76]{Olsson76}
J\o rn Olsson.
\newblock Mc{K}ay numbers and heights of characters
\newblock {\em Math.\ Scand.} 38:125--42, 1976.

\bibitem[Ols90]{Olsson90}
J\o rn Olsson.
\newblock On the {$p$}-blocks of symmetric and alternating groups and their covering groups.
\newblock {\em J.\ Algebra}, 128:188--213, 1990.

\bibitem[OS07]{OlssonStanton}
J\o rn Olsson and Dennis Stanton.
\newblock Block inclusions and cores of partitions.
\newblock {\em Aequationes Math.}, 74(1--2):90--110, 2007.


\bibitem[OS97]{OnoSze}
Ken Ono and Lawrence Sze.
\newblock $4$-core partitions and class numbers.
\newblock {\em Acta Arith.}, 80:249--272, 1997.

\bibitem[Rob00]{Robbins}
Neville Robbins.
\newblock On $t$-core partitions.
\newblock {\em Fibonacci Quart.}, 38(1):39--48, 2000. 

\bibitem[Sar12]{Sarmah-thesis}
Bipul Kumar Sarmah.  
\newblock Contributions to partition identities and sums of polygonal numbers by using Ramanujan's theta functions.
\newblock PhD thesis, Tezpur University, 2012.

\bibitem[Sta99]{Stanton}
Dennis Stanton. 
\newblock Open positivity conjectures for integer partitions.
\newblock {\em Trends Math.}, 2:19--25, 1999.


\end{thebibliography}

\end{document}